\def \dis{\displaystyle}
\def \R{\mathbb{R}} 
\def \N{\mathbb{N}}
\def \N0{\mathbb{N}_0}
\def \a{\alpha}
\def \d{\delta}
\def \e{\varepsilon}
\def \W{\Omega}
\def \phi{\varphi}
\def \ge{\gamma_e}
\def \12{\dis\frac{1}{2}}
\def \1{\mathbbm{1}}
\def \<{\left<}
\def \>{\right>}
\def \L{\mathcal{L}}
\def \Th{\mathcal{T}_h}
\def \EI{\mathcal{E}_h^I}
\def \EB{\mathcal{E}_h^B}
\def \Wh{W_h^{2,p}}
\def \dv{\cdot\nu_{e}}
\def \grad{\nabla}
\def \lss{\lesssim}
\def \tI{{I}_h}
\def \aL{\L_h^*}
\newcommand{\Ome}{\Omega}
\newcommand{\mct}{\mathcal{T}_h}
\newcommand{\les}{\lesssim}
\newcommand{\nonum}{\nonumber}
\newcommand{\vh}{V_h}
\newcommand{\nab}{\nabla}
\newcommand{\mce}{\mathcal{E}_h}
\newcommand{\E}{\mathcal{E}_h}
\def \dx[#1]{\ensuremath{\operatorname{d}\!{#1}}}
\newtheorem{defn}{Definition}
\numberwithin{defn}{section}
\begin{document}

\title{Interior Penalty Discontinuous Galerkin Methods for Second Order Linear Non-Divergence Form Elliptic 
PDEs\thanks{The work of the first and third authors was partial supported by the NSF through grant DMS-1318486 
and the work of the second author was partially supported by the NSF grant DMS-1417980 and the Alfred Sloan 
Foundation.}}
\markboth{X. FENG, M. NEILAN and S. SCHNAKE}{IP-DG METHODS FOR LINEAR NON-DIVERGENCE FORM ELLIPTIC PDEs}

\author{Xiaobing Feng\thanks{Department of Mathematics, University of Tennessee, Knoxville, TN 37996 (xfeng@math.utk.edu).}
\and{Michael Neilan}\thanks{Department of Mathematics, University of Pittsburgh, Pittsburgh, PA 15260 (neilan@pitt.edu).} 
\and{Stefan Schnake}\thanks{Department of Mathematics, University of Tennessee, Knoxville, TN 37996 (schnake@math.utk.edu).}}

\date{}

\maketitle

\begin{abstract}
This paper develops interior penalty discontinuous Galerkin (IP-DG) methods 
to approximate $W^{2,p}$ strong solutions of second order linear elliptic partial differential equations (PDEs)
in non-divergence form with continuous coefficients. The proposed IP-DG methods are closely
related to the IP-DG methods for advection-diffusion equations, and they are easy to implement on 
existing standard IP-DG software platforms. It is proved that the proposed IP-DG methods have unique solutions
and converge with optimal rate to the $W^{2,p}$ strong solution in a discrete $W^{2,p}$-norm. The crux of the analysis 
is to establish a DG discrete counterpart of the Calderon-Zygmund estimate and to adapt a freezing 
coefficient technique used for the PDE analysis at the discrete level. As a 
byproduct of our analysis, we also establish broken $W^{1,p}$-norm error estimates for IP-DG approximations
of constant coefficient elliptic PDEs. Numerical experiments are 
 provided to gauge the performance of the proposed IP-DG methods and to validate the 
theoretical convergence results. 
\end{abstract}

\begin{AMS}
65N30, 65N12, 35J25
\end{AMS}

%%%%%%%%%%%%%
\section{Introduction}\label{section1}

In this paper we develop interior penalty discontinuous Galerkin methods for approximating the $W^{2,p}$ strong
solution to the following second  order linear elliptic PDE in non-divergence form:
\begin{subequations}
\label{NonDivProblem}
\begin{alignat}{2}
\L u(x):= -A(x):D^2 u(x) &= f(x) &&\quad x\in \W, \label{eqn1:1}\\
u(x) &= 0 &&\quad x\in \partial\W, \label{eqn1:2}
\end{alignat}
\end{subequations}
where $\W\subset \R^n$ is an open, bounded domain with boundary $\partial\W$, $f\in L^p(\W)$ 
with $1< p<\infty$, and $A\in [C^0(\overline{\W})]^{n\times n}$ is positive definite 
in $\overline{\Omega}$.

Non-divergence form elliptic PDEs arrive naturally from many applications such as 
stochastic optimal control and game theory \cite{SP:Flem}; they are also encountered  in the 
linearization of fully nonlinear PDEs such as Monge-Amp\`ere-type equations \cite{AM:CG}.  
If $A$ is differentiable, then it is easy to check that equation \eqref{eqn1:1} can be rewritten as
a diffusion-convection equation with $A$ as the diffusion coefficient and ($\nabla \cdot A $) as the
convection coefficient. However, if $A\in \left[C^0(\overline{\W})\right]^{n\times n}$, 
then this formulation is not possible since  ($\nabla \cdot A$) does not exist as a function, but rather
only as a measure. 
We recall that in the literature there are three well-established PDE theories for non-divergence form elliptic 
PDEs % like \eqref{eqn1:1} 
depending on the smoothness of $A$ and $f$ (as well as the smoothness of the boundary
$\partial \Omega$ which we assume sufficiently smooth here to ease the presentation). The first theory 
is the classical solution (or Schauder's) theory \cite[Chapter 6]{Sp:DT} which seeks solutions 
in the H\"older space $C^{2,\alpha}(\overline{\Omega})$ for $0<\alpha<1$ when 
$A\in [C^\alpha(\overline{\W})]^{n\times n}$ and $f\in C^\alpha(\overline{\W})$. The second one 
is the $W^{2,p}$ (strong) solution theory  \cite[Chapter 9]{Sp:DT} which seeks solutions 
in the Sobolev space $W^{2,p}(\Omega)$ for $1<p<\infty$ that satisfy the PDE almost everywhere in 
$\Omega$ under the assumptions that $A\in [C^0(\overline{\W})]^{n\times n}$ and $f\in L^p(\W)$. 
If the coefficient matrix satisfies the Cordes condition and if the domain is convex,
then the notion of strong solutions may be extended to the case $A\in [L^\infty(\Omega)]^{n\times n}$ \cite{SmearsSuli,MaugeriBook}.
The third 
one is the viscosity (weak) solution theory \cite{Crandall_Ishii_Lions} which seeks solutions in $C^0(\W)$ 
(or even in $B(\Omega)$, the space of bounded functions in $\Omega$) that satisfy the PDE in the viscosity sense 
under the assumptions that $A\in [L^\infty(\Omega)]^{n\times n}$ and {$f\in L^\infty(\W)$}. 
We note that all the three solution concepts and PDE theories are non-variational; this is
a main difference between divergence form PDEs \cite[Chapter 8]{Sp:DT} and non-divergence form PDEs. 
We also note that in the case 
of the viscosity solution theory, the uniqueness and regularity of solutions had been the main focus of 
the study for second order non-divergence form PDEs (see \cite{Crandall_Ishii_Lions,SmearsSuli,Nadirashvili97,Safonov99} and 
the references therein).

In contrast to the advances of the PDE analysis, almost no progress on numerical methods and numerical 
analysis was achieved until very recently  
for second order elliptic PDEs in non-divergence form with non-differentiable coefficient matrix $A$ 
(cf. \cite{DednerPryer13,AX:FN,LakkisPryer11,Neilan13,NochettoZhang16,SmearsSuli,AX:WW}). 
The main difficulty is caused by the non-divergence structure of the PDEs which prevents any straightforward application 
of Galerkin-type numerical methodologies such as finite element methods, discontinuous Galerkin methods 
and spectral methods.  Moreover, the non-variational nature of the strong and viscosity solution concepts make 
convergence analysis and/or error estimates of any convergent numerical methods very delicate and difficult. Nonstandard
numerical techniques are often required to do the job (cf. \cite{AX:FN,Neilan13,NochettoZhang16,SmearsSuli,AX:WW}).

The primary goal of this paper is to develop convergent interior penalty discontinuous Galerkin (IP-DG) 
methods for approximating the $W^{2,p}$ strong solution of problem \eqref{NonDivProblem}
under the
assumption that $A\in \left[C^0(\overline{\W})\right]^{n\times n}$ and its solution satisfies 
the Calderon-Zygmund estimate \cite[Chapter 9]{Sp:DT}. This paper can be viewed as a DG counterpart of \cite{AX:FN} 
where non-standard convergent finite element methods were proposed and analyzed for approximating 
the $W^{2,p}$ strong solution of \eqref{NonDivProblem}.
The reason for undertaking such an extension is twofold. First, we intend to take advantage of 
some of the features of DG methods 
such as simplicity and ease of computation, and flexibility of mesh and ease for adaptivity, to design better 
numerical methods for problem \eqref{eqn1:1}--\eqref{eqn1:2}; in the meantime, we develop new numerical 
analysis techniques and machineries for non-divergence form PDEs. Second, since  the jumps of the normal 
derivatives across element edges and mesh-dependent bilinear forms are already used in the finite element 
methods of \cite{AX:FN}, it is natural to use totally discontinuous piecewise 
polynomial approximation spaces to explore the full potential of the interior penalty technique and idea. 
It should be noted that although the 
generalization of the finite element formulations of \cite{AX:FN} to the DG case is not very hard, 
the DG convergence analysis is more involved because of the extra difficulty caused by the 
non-conformity of the DG finite element space. The crux of 
the analysis of this paper is to establish a discrete Calderon-Zygmund estimate
for the proposed IP-DG methods and to adapt a freezing coefficient technique used in the PDE analysis
at the discrete level.  
{Moreover, in order to prove the desired discrete Calderon-Zygmund estimate, we need to
establish the $W^{1,p}_h$ stability and error estimates for the IP-DG approximations of constant coefficient
elliptic PDEs. Such estimates seem to be new and have independent interest.}  

The remainder of this paper is organized as follows. In Section \ref{section2} we provide the notation and a collection 
of preliminary estimates, {in particular, some properties of DG functions are either cited or proved. 
Section \ref{sec:NewConstant} analyzes the IP-DG methods for constant coefficient elliptic PDEs. 
The $W^{1,p}_h$ stability and error estimates are derived, which in turn lead to global $W^{2,p}_h$ 
stability estimates. %for the IP-DG approximate operators to the (constant coefficient) elliptic PDE operator.  
The latter estimate can be regarded as a discrete Calderon-Zygmund estimate 
for the proposed IP-DG methods. To the best of our knowledge, such an estimate is new and of independent interest.} 
Section \ref{section4} is devoted to the formulation, stability and convergence analysis and error estimate
for the proposed IP-DG methods.  
{Here, using the continuity of the coefficient matrix $A$, the freezing coefficient technique is adapted 
to establish local stability 
(or a left-side inf-sup condition) for the IP-DG discrete operators, %(induced by the IP-DG bilinear forms) using the 
%continuity of the coefficient $A$, 
which together with a covering argument leads to a global G\"{a}rding-type inequality 
for the formal adjoint operators of the IP-DG operators. Next, using a duality argument we obtain a 
global left-side inf-sup condition for the IP-DG discrete operators. Finally, by employing another duality argument, we
derive the desired discrete Calderon-Zygmund estimate for the DG discrete operators.}
Once this stability estimate is shown, well-posedness and convergence of the IP-DG methods follow easily. 
In Section \ref{section5} we present a number of numerical experiments to verify our theoretical results 
and to gauge the performance of the proposed IP-DG methods, even for the case 
$A\in \left[L^\infty(\overline{\W})\right]^{n\times n}$ which is not covered by our convergence theory.

%Section 2-------------------------------------------------------------------------
\section{Preliminary Results} \label{section2}

\subsection{Notation}

Let $\W$ be an open and bounded domain in $\R^n$.  For a subdomain $D$ of $\W$ with boundary $\partial D$,  let 
$L^p(D)$ and $W^{s,p}(D)$ for $s\geq 0$ and $1\leq p\leq\infty$ denote the standard Lebesgue and Sobolev spaces respectively,
and $W_0^{1,p}(D)$ be the closure of $C_c^{\infty}(D)$ in $W^{1,p}(D)$.  Let $(\cdot,\cdot)_D$ be the $L^2$ inner 
product on $D$ and $(\cdot,\cdot):=(\cdot,\cdot)_{\W}$.  To improve  the readability of the paper, we 
adopt the convention that $a\lss b$ stands for $a\leq Cb$ for some $C>0$ which does not depend on any discretization
parameters.

Let $\Th$ be a shape-regular and conforming triangulation of $\W$.  Let $\EI$ and $\EB$ denote respectively
the sets of all interior and boundary edges/faces of $\Th$, and set $\E:=\EI\cup \EB$. We introduce the broken Sobolev spaces
\begin{alignat*}{2}
W^{s,p}(\Th) &:= \prod_{T\in\Th}W^{s,p}(T), &&\qquad L^p(\Th):=W^{0,p}(\Th),\\
W^{s,p}_h(D) &:=W^{s,p}(\Th)\bigl|_D, &&\qquad L_h^p(D):= L^p(\Th)\bigr|_D.
\end{alignat*}
For any interior edge/face $e = \partial T^+ \cap \partial T^-\in \EI$ we define the jump and average of a scalar or 
vector valued function $v$ as
\begin{align*}
[v]\big|_{e} := v^+ - v^-, \qquad
\{v\}\big|_{e} := \frac{1}{2}\left(v^+ + v^-\right),
\end{align*}
where $v^{\pm}=v|_{T^\pm}$.  On a boundary edge/face $e\in \EB$
with $e = \partial T^+\cap \partial \Omega$, we set $[v]\big|_{e} =\{v\}\big|_e =  v^+$.
For any $e\in \EI$ we use $\nu_e$ to denote the unit outward normal vector pointing in the direction of the 
element with the smaller global index.  For $e\in \EB$ we set $\nu_e$ to be the outward normal to $\partial\Omega$ 
restricted to $e$.  The standard DG finite element space is defined as   
\begin{align*}
V_h &:= \Bigl\{v_h\in W^{2,p}(\Th);\, v_h\big|_T\in \mathbb{P}_k(T) \quad\forall T\in\Th \Bigr\},
\end{align*}
where $\mathbb{P}_k(T)$ denotes the set of all polynomials of degree less than or equal to $k$ on $T$.
We also introduce for any $D\subset\Omega$ 
\begin{align*}
V_h(D) := \Bigl\{v\in V_h;\, v\big|_{\Omega\setminus \overline{D}} \equiv 0 \Bigr\}.
\end{align*}
Note that $V_h(D)$ is nontrivial provided that there exists an inscribed ball $B$ with radius $r\geq h$ 
such that $B\subset D$.   {We also adopt the convention $V_h(\Omega) = V_h$.}

For each $e\in\E$, let {$\ge>0$} be constant on $e$.  We define the following mesh-dependent
norms on $W_h^{1,p}(D)$ and $W^{2,p}_h(D)$:
\begin{align}
\|v\|_{W^{2,p}_h(D)} &:= \|D^2_hv\|_{L^p(D)} + \Bigl(\sum_{e\in\EI}h_e^{1-p}
\big\| |[\grad v]| \big\|_{L^p(e\cap \bar{D})}^p\Bigr)^\frac{1}{p} \label{eqn2:2} \\
 &\qquad +\Bigl(\sum_{e\in\E}\gamma_e^ph_e^{1-2p}\|[v]\|_{L^p(e\cap \bar{D})}^p\Bigr)^\frac{1}{p}, \nonumber 
% \|v\|_{W^{2,p}_h(D)} &:= |v|_{W^{2,p}_h(D)} + \Bigl(\sum_{e\in\EB}\ge^p h_e^{1-2p}\|v\|_{L^p(e\cap \bar{D})}^p\Bigr)^\frac{1}{p}, \label{eqn2:2}
\\
\|v\|_{W_h^{1,p}(D)} &:= \|\grad_h v\|_{L^p(D)} 
+ \Bigl(\sum_{e\in\E}\ge^p h_e^{1-p}\|[v]\|_{L^p(e\cap \bar{D})}^p\Bigr)^\frac{1}{p} \\
&\qquad +\Bigl(\sum_{e\in \E} h_e \|\{\nab v\}\|_{L^p(e\cap \bar{D})}^p\Bigr)^\frac{1}{p}, \nonumber
%\|v\|_{W_h^{1,p}(D)} &:= |v|_{W_h^{1,p}(D)}+\Bigl(\sum_{e\in\EB}\ge^p h_e^{1-p}\|v\|_{L^p(e\cap \bar{D})}^p\Bigr)^\frac{1}{p},
\end{align}
where $\grad_h v$ and $D_h^2v$ denote the piecewise gradient and Hessian of $v$. 

In addition, we define the discrete $W^{-2,p}_h$-norm and $W_h^{-1,p}$-norm as follows:
\begin{align}
\|q\|_{W_h^{-2,p}(D)} &:= \sup_{0\neq v_h\in V_h}\frac{(q,v_h)_D}{\|v_h\|_{W_h^{2,p^\prime}(D)}},\label{eqn3:31} \\
\|q\|_{W_h^{-1,p}(D)} &:= \sup_{0\neq v\in W_h^{1,p'}(D)}\frac{(q,v)_D}{\|v\|_{W_h^{1,p^\prime}(D)}},
%=\sup_{\substack{0\neq v\in W_h^{1,p^\prime}(D)\\ \|v\|_{W_h^{1,p^\prime}(D)=1}}}(q,v)_D.
\end{align}
where $\frac{1}{p} +\frac{1}{p^\prime}=1$.
Finally, for any domain $D\subseteq\Omega$ and any $w\in L^p_h(D)$, we introduce the following mesh-dependent semi-norm
\begin{align} \label{L2norm}
\|w\|_{L^p_h(D)}:= \sup_{0\neq v_h\in V_h(D)} \frac{\bigl(w, v_h \bigr)_D}{\|v_h\|_{L^{p^\prime}(D)}}.
\end{align}
It can be proved that (cf. \cite{AX:FN})
\begin{align}\label{norm_equiv}
\|w_h\|_{L^p(\Omega)} \lss \|w_h\|_{L^p_h(\Omega)}\qquad \forall w_h\in V_h.
\end{align}

%Subsection 2.2------------------------------------------------------------------------------------
\subsection{Properties of the DG space $V_h$}
In this subsection we collect some technical lemmas that cover the basic properties of functions in the DG space $V_h$.  
These facts will be used many times in the later sections.  We first state the standard trace inequalities for 
broken Sobolev functions, a proof of this lemma can be found in \cite{Sp:BS}.
%%%%%%%%%%%%%%%%%%%%
%%% lem2:2 %%%%%%%%%%%%%
%%%%%%%%%%%%%%%%%%%%
\begin{lemma}\label{lem2:2} 
For any $T\in\Th$ and $1<p<\infty$, there holds
\begin{align}
\|v\|_{L^p(\partial T)}^p\lss \Bigl(h_T^{p-1}\|\grad v\|_{L^p(T)}^p+h_T^{-1}\|v\|_{L^p(T)}^p\Bigr)
\qquad\forall v\in W^{1,p}(T).  \label{eqn2:3}
\end{align}
Therefore by scaling we have
\begin{align} \label{eqn2:12}
\sum_{e\in\EI}h_e\|v\|_{L^p(e\cap \overline{D})}^p \lss \begin{cases} \|v\|_{L^p(D)}^p 
&\forall v\in V_h(D), \\ \|v\|_{L^p(D)}^p +h^p\|\grad v\|_{L^p(D)}^p 
&\forall v\in W_h^{2,p}(D).
\end{cases}
\end{align}
\end{lemma}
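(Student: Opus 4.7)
The plan is standard: prove \eqref{eqn2:3} by a reference-element scaling argument, and then derive \eqref{eqn2:12} from \eqref{eqn2:3} by summing over elements, using an inverse inequality to remove the gradient term in the $V_h(D)$ case.

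For \eqref{eqn2:3}, I would fix a reference simplex $\hat{T}$ and invoke the classical continuous trace theorem $\|\hat{v}\|_{L^p(\partial \hat{T})} \lesssim \|\hat{v}\|_{W^{1,p}(\hat{T})}$, valid on any bounded Lipschitz domain. Pulling back through the affine map $F_T:\hat{T}\to T$, whose Jacobian determinant scales like $h_T^n$ and whose derivative satisfies $\|DF_T^{-1}\| \sim h_T^{-1}$ by shape-regularity, the surface measure on $\partial T$ scales like $h_T^{n-1}$. Raising the trace inequality to the $p$-th power and carefully tracking the powers of $h_T$ produces the factor $h_T^{-1}$ in front of $\|v\|_{L^p(T)}^p$ and $h_T^{p-1}$ in front of $\|\nabla v\|_{L^p(T)}^p$, yielding \eqref{eqn2:3}.

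For \eqref{eqn2:12}, I apply \eqref{eqn2:3} on each element $T$ adjacent to $e$, restrict the boundary integration to $e\cap\bar D$, multiply by $h_e \sim h_T$, and sum. Since each element has a bounded number of faces, this immediately gives the bound for $v\in W_h^{2,p}(D)$:
\begin{equation*}
\sum_{e\in\EI} h_e\|v\|_{L^p(e\cap \overline{D})}^p \lesssim \sum_{T\in\Th,\, T\cap D\neq\emptyset} \Bigl(\|v\|_{L^p(T\cap D)}^p + h_T^p\|\nabla v\|_{L^p(T\cap D)}^p\Bigr).
\end{equation*}
For $v\in V_h(D)$, the restriction $v|_T$ is a polynomial of degree $\leq k$, so the standard inverse inequality $\|\nabla v\|_{L^p(T)} \lesssim h_T^{-1}\|v\|_{L^p(T)}$ applies on each element and absorbs the gradient term into $h_T^{-1}\|v\|_{L^p(T)}^p$; after multiplication by $h_e \sim h_T$ and summation, only $\|v\|_{L^p(D)}^p$ remains. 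Faces $e$ with $e\cap\bar D=\emptyset$ contribute nothing, and on any element outside $\bar D$ the function $v$ vanishes by definition of $V_h(D)$, so the support restriction is automatic.

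There is no real obstacle here — this lemma is classical and is invoked purely as a technical tool. The only point deserving a moment of care is making sure that when summing over interior faces one uses shape-regularity to identify $h_e$ with $h_T$ for any $T$ containing $e$, and that the support condition defining $V_h(D)$ is compatible with restricting the face integrals to $e\cap\bar D$; both are routine.
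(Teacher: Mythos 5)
Your proposal is correct and is exactly the standard argument the paper has in mind: the paper gives no proof of its own, citing \cite{Sp:BS} for \eqref{eqn2:3} and invoking ``scaling'' for \eqref{eqn2:12}, and your reference-element pullback plus element-wise summation with the inverse inequality for the polynomial case is precisely that routine. The powers of $h_T$ you track ($h_T^{-(n-1)}$ on $\partial T$ versus $h_T^{-n}$ and $h_T^{p-n}$ on $T$) come out right, so there is nothing to add.
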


Next, we prove an inverse inequality between the $\Wh$-norm and the $W_h^{1,p}$-norm.
%%%%%%%%%%%%%%%%%%%%
%%% lem2:1 %%%%%%%%%%%%%
%%%%%%%%%%%%%%%%%%%%
\begin{lemma} \label{lem2:1}
For any $v_h\in V_h$, $D\subseteq\W$, there holds for $1 < p <\infty$
\begin{align}
%|v_h|_{W^{2,p}_h(D)} &\lss h^{-1}|v_h|_{W_h^{1,p}(D_h)}, \label{eqn2:4} \\
\|v_h\|_{W^{2,p}_h(D)} &\lss h^{-1}\|v_h\|_{W_h^{1,p}(D_h)}, \label{eqn2:5}
\end{align}
where
\begin{align}
D_h = \{x\in\W;\,\operatorname{dist}(x,D)\leq h\}.
\end{align}
\end{lemma}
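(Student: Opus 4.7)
The plan is to treat the three terms defining $\|v_h\|_{W^{2,p}_h(D)}$ separately and show that each is bounded by $h^{-1}$ times a corresponding piece of $\|v_h\|_{W^{1,p}_h(D_h)}$. Throughout, I will rely only on two polynomial-level facts: the standard inverse inequality $\|D^2 q\|_{L^p(T)} \lesssim h_T^{-1}\|\nabla q\|_{L^p(T)}$ for $q \in \mathbb{P}_k(T)$, and the polynomial trace-inverse inequality $\|\nabla q\|_{L^p(\partial T)}^p \lesssim h_T^{-1}\|\nabla q\|_{L^p(T)}^p$, which follows from \eqref{eqn2:3} combined with the element-wise inverse inequality. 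The role of the enlarged region $D_h$ is simply to absorb contributions from elements $T \in \Th$ that touch $\bar D$ through a shared edge but are not themselves contained in $D$; shape-regularity ensures every such $T$ lies in $D_h$.

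First, for the Hessian term, I would apply the polynomial inverse inequality on each $T \subset D$ and sum in $\ell^p$ to get $\|D^2_h v_h\|_{L^p(D)} \lesssim h^{-1}\|\nabla_h v_h\|_{L^p(D)} \le h^{-1}\|v_h\|_{W^{1,p}_h(D_h)}$. Second, for the gradient-jump term, I would use $|[\nabla v_h]| \le |\nabla v_h^+| + |\nabla v_h^-|$ together with the polynomial trace-inverse bound on each side to obtain
\begin{align*}
\sum_{e\in\EI} h_e^{1-p}\bigl\||[\nabla v_h]|\bigr\|_{L^p(e\cap\bar D)}^p
&\lesssim \sum_{T \subset D_h} h_T^{1-p}\cdot h_T^{-1}\|\nabla v_h\|_{L^p(T)}^p
= h^{-p}\|\nabla_h v_h\|_{L^p(D_h)}^p,
\end{align*}
which after taking the $p$-th root gives the desired $h^{-1}$ factor. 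Third, the function-jump term is immediate: comparing the weights $\gamma_e^p h_e^{1-2p}$ in $\|\cdot\|_{W^{2,p}_h}$ with $\gamma_e^p h_e^{1-p}$ in $\|\cdot\|_{W^{1,p}_h}$ yields exactly the loss of a factor $h_e^{-p}$, i.e.\ $h^{-1}$ after the root. Combining the three estimates gives \eqref{eqn2:5}.

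No step here is really an obstacle; all the ingredients are classical polynomial inverse and trace-inverse inequalities. The only point requiring a little care is the bookkeeping on the summation region: an edge $e$ with $e \cap \bar D \neq \emptyset$ may be shared by an element outside $D$, so the trace-inverse step naturally produces an integral over $D_h$ rather than $D$, which is precisely why the right-hand side of \eqref{eqn2:5} is written over $D_h$.
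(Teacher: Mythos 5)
Your proof is correct and follows essentially the same route as the paper: the Hessian term is handled by the elementwise inverse estimate, the gradient-jump term by the trace inequality \eqref{eqn2:3} applied to $\nabla v_h$ combined with the inverse estimate (the paper keeps the two trace terms separate before invoking the inverse estimate, which is only a cosmetic difference), and the function-jump term by comparing the weights $\gamma_e^p h_e^{1-2p}$ and $\gamma_e^p h_e^{1-p}$. The bookkeeping with $D_h$ is also identical to the paper's.
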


\begin{proof}
To show \eqref{eqn2:5}, we use \eqref{eqn2:2}, \eqref{eqn2:3}, and standard inverse estimates \cite{Sp:BS}
to obtain
\begin{align*}
\|v_h\|_{\Wh(D)} &\lss \|D_h^2v_h\|_{L^p(D)}+ \Bigl(\sum_{e\in\E}\ge^p h_e^{1-2p}\|[v]\|_{L^p(e\cap \bar{D})}^p\Bigr)^\frac{1}{p}\\
&\qquad
+\sum_{\substack{T\in\Th \\ T\subset D_h}}\Bigl(h_T^{1-p} \Bigl(h_T^{p-1}\|D^2v_h\|_{L^p(T)}^p+h_T^{-1}\|\grad v_h\|_{L^p(T)}^p
\Bigr)\Bigr)^\frac{1}{p} \\
&\lss\|D_h^2v_h\|_{L^p(D)}+ \Bigl(\sum_{e\in\E}\ge^p h_e^{-p}h_e^{1-p}\|[v]\|_{L^p(e\cap \bar{D})}^p\Bigr)^\frac{1}{p} \\ 
&\qquad
+\sum_{\substack{T\in\Th \\ T\subset D_h}}\Bigl(\|D^2v_h\|_{L^p(T)}^p+h_T^{-p}\|\grad v_h\|_{L^p(T)}^p\Bigr)^\frac{1}{p} \\
&\lss h^{-1}\|v_h\|_{W_h^{1,p}(D_h)} +  h^{-1}\Bigl(\sum_{e\in\EI}\ge^p h_e^{1-p}\|[v]\|_{L^p(e\cap \bar{D})}^p\Bigr)^\frac{1}{p}\\
&\lss h^{-1}\|v_h\|_{W_h^{1,p}(D_h)}.
\end{align*}\hfill
%
%Inequality \eqref{eqn2:5} can be proved in the same way, as the additional boundary penalty terms on the left-hand side
%can be handed similarly to the interior penalty terms. The proof is complete
\end{proof}

%We show an equivalence between the gradient and the normal derivative.
%
%\begin{lemma}\label{lem2:5.4}
%Let $v_h\in V_h(D)$ for subdomain $D$.  Define the norm $\trip v_h\trip_{\Wh}$ as  
%\begin{align*}
%\trip v_h\trip_{\Wh} &= \|D^2_hv\|_{L^p(D)} + \left(\sum_{e\in\EI}h_e^{1-p}\big|\big| \, \left|[\grad v_h]\right| \, \big|\big|_{L^p(e\cap \bar{D})}^p\right)^\frac{1}{p} \\
%  &+\left(\sum_{e\in\EI}\gamma_e^ph_e^{1-2p}\|[v]\|_{L^p(e\cap \bar{D})}^p\right)^\frac{1}{p} + \left(\sum_{e\in\EB}\gamma_e^ph_e^{1-2p}\|v\|_{L^p(e\cap \bar{D})}^p\right)^\frac{1}{p}.
%\end{align*}
%\end{lemma}

We also prove an inverse inequality between the $L^p$-norm and the $W_h^{-1,p}$-norm. 

\begin{lemma}\label{lem2:5.5}
Let $v\in V_h(D)$.  For any $1< p<\infty$ and subdomain $D\subset\W$ we have
\begin{align}\label{eqn2:75}
\|v_h\|_{L^p(D)}\lss h^{-1}\|v_h\|_{W_h^{-1,p}(D)}.
\end{align}
\end{lemma}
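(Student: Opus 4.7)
The plan is to reduce the $L^p(D)$-norm to the discrete semi-norm $\|\cdot\|_{L^p_h(D)}$ via the norm equivalence \eqref{norm_equiv}, unfold the sup definition of $\|\cdot\|_{L^p_h(D)}$, and then dualize through $\|\cdot\|_{W_h^{-1,p}(D)}$ against the test function. The remaining work is to prove the local inverse inequality
\begin{align*}
\|\phi\|_{W_h^{1,p'}(D)} \lesssim h^{-1}\|\phi\|_{L^{p'}(D)} \qquad \forall \phi\in V_h(D),
\end{align*}
which plays the dual role here that Lemma~\ref{lem2:1} played for the $W^{2,p}_h$ bound.

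To be concrete, first I invoke \eqref{norm_equiv} to bound $\|v_h\|_{L^p(D)} \lesssim \|v_h\|_{L^p_h(D)}$, and then use \eqref{L2norm} to write
\begin{align*}
\|v_h\|_{L^p_h(D)} = \sup_{0\neq\phi\in V_h(D)} \frac{(v_h,\phi)_D}{\|\phi\|_{L^{p'}(D)}}.
\end{align*}
For each such $\phi$, since $\phi\in V_h(D)\subset W_h^{1,p'}(D)$, the definition of $\|\cdot\|_{W_h^{-1,p}(D)}$ gives
\begin{align*}
(v_h,\phi)_D \leq \|v_h\|_{W_h^{-1,p}(D)}\,\|\phi\|_{W_h^{1,p'}(D)}.
\end{align*}
So the whole lemma reduces to establishing the stated inverse bound.

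The inverse inequality is shown term by term. The volume piece $\|\nabla_h\phi\|_{L^{p'}(D)} \lesssim h^{-1}\|\phi\|_{L^{p'}(D)}$ is the standard polynomial inverse estimate. For the jump seminorm, the inverse trace inequality on polynomials (the reverse of \eqref{eqn2:3} for piecewise polynomials) gives $\|[\phi]\|_{L^{p'}(e)}^{p'} \lesssim h_e^{-1}\|\phi\|_{L^{p'}(T^+\cup T^-)}^{p'}$, hence
\begin{align*}
\sum_{e\in\mathcal{E}_h}\gamma_e^{p'} h_e^{1-p'}\|[\phi]\|_{L^{p'}(e\cap \bar D)}^{p'} \lesssim h^{-p'}\|\phi\|_{L^{p'}(D)}^{p'}.
\end{align*}
Applying the same inverse trace inequality to $\nabla_h\phi$ and then the volume inverse estimate yields
\begin{align*}
\sum_{e\in\mathcal{E}_h} h_e\|\{\nabla\phi\}\|_{L^{p'}(e\cap\bar D)}^{p'} \lesssim \|\nabla_h\phi\|_{L^{p'}(D)}^{p'} \lesssim h^{-p'}\|\phi\|_{L^{p'}(D)}^{p'}.
\end{align*}
The fact that $\phi$ vanishes outside $D$ is what ensures the right-hand sides only involve $L^{p'}$-mass inside $D$, even for edges lying on $\partial D$. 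Combining the three pieces gives $\|\phi\|_{W_h^{1,p'}(D)} \lesssim h^{-1}\|\phi\|_{L^{p'}(D)}$.

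Inserting this bound, dividing by $\|\phi\|_{L^{p'}(D)}$, and taking the supremum over $\phi\in V_h(D)$ delivers $\|v_h\|_{L^p_h(D)} \lesssim h^{-1}\|v_h\|_{W_h^{-1,p}(D)}$, which combined with \eqref{norm_equiv} yields \eqref{eqn2:75}. The only delicate point is verifying that the inverse trace estimates continue to hold verbatim for edges on $\partial D$, but this follows immediately since $\phi\equiv 0$ on the element opposite $D$ and the trace inequality on the remaining element is the standard one.
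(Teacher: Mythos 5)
Your argument is correct and follows essentially the same route as the paper's proof: reduce $\|v_h\|_{L^p(D)}$ to the duality seminorm via \eqref{norm_equiv}, then apply the inverse estimate $\|\phi\|_{W_h^{1,p'}}\lesssim h^{-1}\|\phi\|_{L^{p'}}$ to the test function before dualizing through $\|\cdot\|_{W_h^{-1,p}(D)}$ (the paper simply cites this inverse estimate as standard rather than verifying it term by term as you do). The only cosmetic difference is that you work with the localized seminorm $\|\cdot\|_{L^p_h(D)}$ and test functions in $V_h(D)$, whereas \eqref{norm_equiv} is stated on $\Omega$ with the supremum over all of $V_h$; for $v_h\in V_h(D)$ the two coincide because the pairing only sees the support of $v_h$, so this is harmless.
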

\begin{proof}
Using the relation \eqref{norm_equiv} and the definition of $\|\cdot\|_{L^p_h(\Omega)}$, we find that
\begin{align*}
\|v_h\|_{L^p(D)}\le \|v_h\|_{L^p(\Omega)}\lss \|v_h\|_{L^p_h(\Omega)}
= \sup_{0 \neq w_h\in V_h} \frac{(v_h,w_h)_D}{\|w_h\|_{L^{p^\prime}(\Omega)}}\quad
\forall v_h\in V_h(D).
\end{align*}
Therefore, by the standard inverse
estimate $h \|w_h\|_{W_h^{1,p^\prime}(D)}\le h \|w_h\|_{W_h^{1,p^\prime}(\Omega)}\lss 
\|w_h\|_{L^{p^\prime}(\Omega)}$
and noting that $V_h\big(D)\subset W^{1,p^\prime}_h(D)$, we obtain
\begin{align*}
\|v_h\|_{L^p(D)}
&\lss h^{-1} \sup_{0\neq w_h\in V_h} \frac{(v_h,w_h)_D}{\|w_h\|_{W_h^{1,p^\prime}(D)}}
\le  h^{-1} \sup_{0\neq w\in W^{1,p^\prime}_h(D)} \frac{(v_h,w)_D}{\|w\|_{W_h^{1,p^\prime}(D)}}
=\|v_h\|_{W_h^{-1,p}(D)}.
\end{align*}
The proof is complete.
\end{proof}

The following lemma shows that the broken Sobolev norms are controlled by their  
corresponding Sobolev norms. 
%
%%%%%%%%%%%%%%%%%%%%
%%% lem2:3.3 %%%%%%%%%%%%
%%%%%%%%%%%%%%%%%%%%
\begin{lemma} \label{lem2:3.3}%Lemma 2.3
For any $1<p<\infty$ there holds the following inequality:
\begin{align*}
%|\phi|_{\Wh(\W)} &\leq |\phi|_{W^{2,p}(\W)} \quad\forall\phi\in W^{2,p}(\W), \\
\|\phi\|_{\Wh(\W)} &\leq \|\phi\|_{W^{2,p}(\W)} \quad\forall\phi\in W^{2,p}(\W)\cap W_0^{1,p}(\Omega).
\end{align*}
\end{lemma}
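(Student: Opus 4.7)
The plan is to show that for $\phi \in W^{2,p}(\Omega) \cap W_0^{1,p}(\Omega)$ each of the three summands in the definition \eqref{eqn2:2} is bounded by $\|\phi\|_{W^{2,p}(\Omega)}$, and in fact the two jump summands vanish identically, leaving only the Hessian term. The argument is essentially a regularity bookkeeping exercise, so I do not expect any serious obstacle; the only point requiring a moment of care is the rigorous justification that the traces from the two sides of an interior face agree.

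First, I would observe that since $\phi \in W^{2,p}(\Omega)$, the broken Hessian $D_h^2 \phi$ and the distributional Hessian $D^2\phi$ coincide elementwise, hence
\begin{align*}
\|D_h^2 \phi\|_{L^p(\Omega)} = \|D^2 \phi\|_{L^p(\Omega)} \le \|\phi\|_{W^{2,p}(\Omega)}.
\end{align*}

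Next, I would handle the jumps. Because $\phi \in W^{1,p}(\Omega)$, the standard trace theorem on Lipschitz domains applied to $T^+$ and $T^-$ (for any interior face $e=\partial T^+\cap\partial T^-$) gives $\phi^+|_e = \phi^-|_e$ as elements of $L^p(e)$, so $[\phi]=0$ on every $e\in\EI$. Similarly, $\phi\in W_0^{1,p}(\Omega)$ means its trace on $\partial\Omega$ vanishes, so $[\phi]=0$ on every $e\in\EB$. For the gradient jump, I would apply the same trace argument to the vector field $\nabla\phi\in [W^{1,p}(\Omega)]^n$ (whose existence in $W^{1,p}$ is precisely what $\phi\in W^{2,p}(\Omega)$ provides); hence $[\nabla\phi]=0$ on every $e\in\EI$.

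Combining these three facts, the two jump sums in \eqref{eqn2:2} are zero, and the proof concludes with
\begin{align*}
\|\phi\|_{W^{2,p}_h(\Omega)} = \|D^2\phi\|_{L^p(\Omega)} \le \|\phi\|_{W^{2,p}(\Omega)}.
\end{align*}
The only step worth lingering on is the use of the two-sided trace agreement on interior faces; everything else is immediate from the definitions.
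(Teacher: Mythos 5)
Your proof is correct, and it takes a different (more direct) route than the paper. The paper's own proof is a one-line density argument: the inequality is immediate for $\phi\in C^{\infty}(\W)\cap W^{1,p}_0(\W)$ (where all jumps vanish trivially), and it is then extended to all of $W^{2,p}(\W)\cap W^{1,p}_0(\W)$ by density. You instead work directly with a general $\phi\in W^{2,p}(\W)\cap W^{1,p}_0(\W)$ and invoke the two-sided trace theorem to show that $[\phi]$ vanishes on interior and boundary faces and that $[\nabla\phi]$ vanishes on interior faces, which is exactly the right mechanism and is correctly justified (the key fact being that a globally $W^{1,p}$ function, respectively its gradient when $\phi\in W^{2,p}$, has matching traces from the two sides of an interior interface). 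What your approach buys is that it sidesteps two points the paper leaves implicit: the density of smooth functions in $W^{2,p}(\W)\cap W^{1,p}_0(\W)$, and the continuity of the mesh-dependent norm $\|\cdot\|_{\Wh(\W)}$ with respect to the $W^{2,p}(\W)$ topology (which is needed to pass to the limit in the jump terms and itself rests on the trace inequality of Lemma \ref{lem2:2}). The paper's argument is shorter to state but relies on these facts tacitly; yours is self-contained at the cost of invoking the interface-trace characterization explicitly. Either way the conclusion $\|\phi\|_{\Wh(\W)}=\|D^2\phi\|_{L^p(\W)}\le\|\phi\|_{W^{2,p}(\W)}$ is established.
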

\begin{proof}
Since the inequality holds for all $\phi\in C^{\infty}(\W)\cap W^{1,p}_0(\Omega)$ and it can be extended to 
all $\phi\in W^{2,p}(\W)\cap W^{1,p}_0(\Omega)$ by a density argument.  
%The second inequality follows from 
%the fact that $C_c^{\infty}(\W)$ is dense in $W^{2,p}(\W)\cap W_0^{1,p}(\Omega)$. 
%The proof is complete.
\end{proof}

The next lemma establishes a Poincar\'{e}-Friedrichs' inequality for DG functions.

%%%%%%%%%%%%%%%%%%%
%%% lem2:55 %%%%%%%%%%%%
%%%%%%%%%%%%%%%%%%%
\begin{lemma}\label{lem2:55}
Let $D\subset \Omega$ such that $V_h(D)\neq \{0\}$ and ${\rm diam}(D)\geq h$.
Then for any $v_h\in V_h(D)$ there holds the following inequalities:
\begin{align}
\|v_h\|_{L^{p}(D)} &\lss {\rm diam}(D)\|v_h\|_{W_h^{1,p}(D)} \label{eqn2.6:1},\\
\|v_h\|_{W_h^{1,p}(D)} &\lss {\rm diam}(D)  \|v_h\|_{\Wh(D)} \label{eqn2.6:2}.
\end{align}
\end{lemma}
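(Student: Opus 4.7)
The plan is to deduce both inequalities from a scale-invariant DG Poincar\'e--Friedrichs bound, applied first to $v_h$ itself and then, componentwise, to $\nabla_h v_h$.

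For \eqref{eqn2.6:1}, I would exploit the fact that $v_h\in V_h(D)$ vanishes identically on $\Omega\setminus\overline{D}$, so that on every interior edge $e\subset\partial D$ one has $[v_h]|_e = v_h^+|_e$; the ``boundary trace'' of $v_h$ on $\partial D$ is therefore already encoded in the jump part of $\|v_h\|_{W_h^{1,p}(D)}$. Rescaling $D$ to a unit-diameter domain $\widetilde D = D/\operatorname{diam}(D)$, with scaled mesh parameter $\widetilde h = h/\operatorname{diam}(D)\leq 1$ (using the hypothesis $\operatorname{diam}(D)\geq h$), allows one to apply a standard scale-invariant DG Poincar\'e--Friedrichs inequality
$$\|\widetilde w_h\|_{L^p(\widetilde D)}\lesssim \|\widetilde w_h\|_{W_h^{1,p}(\widetilde D)} \qquad \forall\, \widetilde w_h\in V_h(\widetilde D),$$
which follows in the standard way from an $L^p$-stable Scott--Zhang-type enriching operator mapping $V_h(\widetilde D)$ into $W_0^{1,p}(\widetilde D)$ whose approximation error is controlled by the DG jump terms, together with the continuous Poincar\'e inequality on $\widetilde D$. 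Undoing the scaling produces the factor $\operatorname{diam}(D)$ on the right and yields \eqref{eqn2.6:1}.

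For \eqref{eqn2.6:2} I split the left-hand side into its three defining pieces. Using $h_e\leq h\leq\operatorname{diam}(D)$, the elementary bound $h_e^{1-p}\leq\operatorname{diam}(D)^p h_e^{1-2p}$ controls the $[v_h]$-jump term of $\|v_h\|_{W_h^{1,p}(D)}$ directly by $\operatorname{diam}(D)\|v_h\|_{W_h^{2,p}(D)}$. For the average term, each component of $\nabla v_h$ belongs to $V_h(D)$ (since it vanishes where $v_h$ does), so Lemma \ref{lem2:2} gives $\bigl(\sum_{e}h_e\|\{\nabla v_h\}\|_{L^p(e\cap\overline{D})}^p\bigr)^{1/p}\lesssim \|\nabla_h v_h\|_{L^p(D)}$. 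It therefore suffices to bound $\|\nabla_h v_h\|_{L^p(D)}$ by $\operatorname{diam}(D)\|v_h\|_{W_h^{2,p}(D)}$, which I achieve by applying \eqref{eqn2.6:1} componentwise to $\partial_i v_h\in V_h(D)$: the three pieces of the resulting right-hand side are $\|D_h^2 v_h\|_{L^p(D)}$, a jump term comparable (up to the bounded penalty factor $\gamma_e$) to the $[\nabla v_h]$-part of $\|v_h\|_{W_h^{2,p}(D)}$, and an average term of the form $\sum_e h_e\|\{\nabla\partial_i v_h\}\|^p$ which is again absorbed into $\|D_h^2 v_h\|_{L^p(D)}$ by Lemma \ref{lem2:2}. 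Assembling these bounds completes \eqref{eqn2.6:2}.

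The main obstacle is the scale-invariant $L^p$ DG Poincar\'e--Friedrichs inequality in the first step. Although classical for $p=2$, for general $1<p<\infty$ it requires an $L^p$-stable enriching operator and a careful verification that the mesh-dependent weights $h_e^{1-p}$ and the penalty weights $\gamma_e$ transform correctly under the diameter rescaling; the remaining bookkeeping in paragraph two is then essentially dimensional.
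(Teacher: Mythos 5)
Your treatment of \eqref{eqn2.6:1} is essentially sound and, underneath the rescaling, rests on the same mechanism as the paper's own argument: an enriching operator mapping $V_h(D)$ into a conforming space supported in a slight enlargement $D_h$ of $D$, the continuous Poincar\'e--Friedrichs inequality for the conforming part, and control of the nonconforming remainder by the jump terms. (The paper applies the operator $E_h$ of \cite{Houston11} directly on $D_h$, whose diameter is $\lesssim \operatorname{diam}(D)$ thanks to $\operatorname{diam}(D)\geq h$, rather than rescaling to unit diameter; this difference is cosmetic.)

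The proof of \eqref{eqn2.6:2}, however, has a genuine gap. You reduce it to the claim $\|\partial_i v_h\|_{W^{1,p}_h(D)}\lesssim \|v_h\|_{W^{2,p}_h(D)}$, asserting that the jump part of the left-hand side is comparable to the $[\nabla v_h]$-part of $\|v_h\|_{W^{2,p}_h(D)}$. But the two sums run over different face sets: $\|\partial_i v_h\|_{W^{1,p}_h(D)}$ contains $\sum_{e\in\mathcal{E}_h}\gamma_e^p h_e^{1-p}\|[\partial_i v_h]\|_{L^p(e\cap\overline D)}^p$ over \emph{all} faces, whereas the gradient-jump part of $\|v_h\|_{W^{2,p}_h(D)}$ in \eqref{eqn2:2} runs only over the \emph{interior} faces $\mathcal{E}_h^I$. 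On a boundary face $e\in\mathcal{E}_h^B$ one has $[\partial_i v_h]=\partial_i v_h^+$, and the resulting term $h_e^{1-p}\|\nabla v_h\|_{L^p(e)}^p$ is not controlled by $\|v_h\|_{W^{2,p}_h(D)}^p$: that norm sees only $D^2_h v_h$, interior gradient jumps, and the trace of $v_h$ itself on $\partial\Omega$, none of which control the normal derivative on $\partial\Omega$ without an $h^{-1}$ loss. A concrete failure (take $D=\Omega=(0,1)$, $p>1$): a continuous piecewise linear $v_h$ with $v_h(0)=v_h(1)=0$ whose slope decreases linearly from $M$ to $-M$ across the mesh has interior gradient jumps of size $Mh$, hence $\|v_h\|_{W^{2,p}_h(\Omega)}^p\approx M^p$, while the boundary contribution to $\|v_h'\|_{W^{1,p}_h(\Omega)}^p$ is $h^{1-p}|v_h'(0)|^p=h^{1-p}M^p\to\infty$. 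The paper avoids this entirely by never estimating $\|\partial_i v_h\|_{W^{1,p}_h}$: it writes $\|v_h\|_{W^{1,p}_h(D)}\le \|E_hv_h\|_{W^{1,p}(D_h)}+\|v_h-E_hv_h\|_{W^{1,p}_h(\Omega)}$ with $E_hv_h\in H^2_0(D_h)$, applies the continuous Poincar\'e inequality to $E_hv_h$, and invokes the approximation bound $\|v_h-E_hv_h\|_{W^{1,p}_h(\Omega)}\lesssim h\|v_h\|_{W^{2,p}_h(D)}$, whose left side involves on boundary faces only jumps of the function $v_h-E_hv_h$, not of its gradient. You would need either this route or a separate argument for the boundary-face normal-derivative terms to close \eqref{eqn2.6:2}.
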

\begin{proof}
Let $\tilde{V}_h$ denote the generalized Hsiegh--Clough--Tochner space \cite{DouglasDupont79},
and let
$E_h:V_h\to \tilde{V}_h$ 
be the reconstruction operator constructed in \cite{Houston11}.  The arguments given in \cite{Houston11}  
show that, for $v_h\in V_h(D)$,
\begin{align}\label{EhStability}
&E_h v_h\in H^2_0(D_h),\\
&\nonumber \|v_h - E_hv_h\|_{L^p(\Omega)} \lss h \|v_h\|_{W^{1,p}_h(D)},\\
&\nonumber \|v_h - E_hv_h\|_{W^{m,p}_h(\Omega)}\lss h^{s-m} \|v_h\|_{W^{s,p}_h(D)},\qquad 1\le m\le s\le 2,
%h\|E_h^{(1)}v_h \|_{W^{1,p}(D_h)}+ \|v_h-E_h^{(1)} v_h\|_{L^p(D)}\lss h \|v_h\|_{W^{1,p}_h(\Omega)} = h \|v_h\|_{W^{1,p}_h(D)},\\
%%
%&E^{(2)}_h w_h\in H^2_0(D_h),\\
%&h\|E_h^{(2)}w_h \|_{W^{2,p}(D_h)}+ \|w_h-E_h^{(1)} w_h\|_{W^{1,p}(D)}\lss h \|w_h\|_{W^{1,p}_h(\Omega)} = h \|w_h\|_{W^{2,p}_h(D)}.
\end{align}
where $D_h$ is the same as in Lemma \ref{lem2:1}.
Therefore, by the triangle inequality, the Poincar\`e-Friedrichs inequality, 
and the assumption ${\rm diam}(D)\geq h$,
\begin{align*}
\|v_h\|_{L^p(D)}&\le \|E_h v_h\|_{L^p(D)} +\|v_h- E_h v_h\|_{L^p(\Omega)}\\
&\lss {\rm diam}(D) \|E_h v_h\|_{W^{1,p}(D_h)} + h \|v_h\|_{W^{1,p}_h(D)} \lss  {\rm diam}(D)\|v_h\|_{W^{1,p}_h(D)}.
\end{align*}

Likewise, we find
\begin{align*}
\|v_h\|_{W^{1,p}_h(D)} 
&\le \|E_h v_h\|_{W^{1,p}(D_{h})}+ \|v_h- E_h v_h\|_{W^{1,p}_h(\Omega)},\\
&\lss {\rm diam}(D)\|E_hv_h\|_{W^{2,p}(D_{h})} + h\|v_h\|_{W^{2,p}_h(\Omega)}\lss {\rm diam}(D) \|v_h\|_{W^{2,p}_h(D)}.
\end{align*}
The proof is complete.
\end{proof}

Next we establish a discrete Sobolev interpolation estimate for DG functions.
%
%%%%%%%%%%%%%%%%%%%%
%%%%% lem2:5 %%%%%%%%%%%
%%%%%%%%%%%%%%%%%%%%
\begin{lemma} \label{lem2:5}
Let $1< p< \infty$.  For all $v_h\in V_h$ we have
\begin{align}\label{eqn2:79}
\|v_h\|_{W_h^{1,p}(\W)}^2 \lss  \|v_h\|_{L^p(\W)} \|v_h\|_{\Wh(\W)}.
\end{align}
%where $\bar{\gamma}:=\max_{e\in \EI\cup \EB} \gamma_e$
\end{lemma}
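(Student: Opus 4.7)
My plan is to transfer the classical Gagliardo--Nirenberg interpolation inequality from the conforming world to the broken setting by routing through the $H^2$-conforming reconstruction operator $E_h$ introduced in the proof of Lemma~\ref{lem2:55}. Taking $D=\Omega$ (so that $V_h(\Omega)=V_h$ and $D_h=\Omega$), each $v_h\in V_h$ admits a lift $\tilde v_h:=E_hv_h\in H^2_0(\Omega)$ satisfying the three approximation bounds recorded in \eqref{EhStability}:
\begin{align*}
\|v_h-\tilde v_h\|_{L^p(\Omega)}&\lesssim h\,\|v_h\|_{W_h^{1,p}(\Omega)},\\
\|v_h-\tilde v_h\|_{W_h^{1,p}(\Omega)}&\lesssim h\,\|v_h\|_{W_h^{2,p}(\Omega)},\\
\|v_h-\tilde v_h\|_{W_h^{2,p}(\Omega)}&\lesssim \|v_h\|_{W_h^{2,p}(\Omega)}.
\end{align*}
Because $\tilde v_h\in W^{2,p}(\Omega)\cap W_0^{1,p}(\Omega)$, the classical interpolation inequality $\|\tilde v_h\|_{W^{1,p}(\Omega)}^2\lesssim \|\tilde v_h\|_{L^p(\Omega)}\,\|\tilde v_h\|_{W^{2,p}(\Omega)}$ provides the engine of the proof.

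To translate this estimate back to the broken norm, I first note that since $\tilde v_h\in C^1(\overline\Omega)\cap H^1_0(\Omega)$, every jump term in $\|\tilde v_h\|_{W_h^{1,p}(\Omega)}$ vanishes, and the trace inequality \eqref{eqn2:3} applied to the remaining $\{\nabla\tilde v_h\}$ face term yields $\|\tilde v_h\|_{W_h^{1,p}(\Omega)}\lesssim \|\tilde v_h\|_{W^{1,p}(\Omega)}+h\|\tilde v_h\|_{W^{2,p}(\Omega)}$. Combining this with the triangle inequality, the approximation bounds above, and the classical interpolation inequality applied to $\tilde v_h$, I would obtain after squaring
\[
\|v_h\|_{W_h^{1,p}(\Omega)}^{2}
\lesssim \bigl(\|v_h\|_{L^p(\Omega)}+h\,\|v_h\|_{W_h^{1,p}(\Omega)}\bigr)\,\|v_h\|_{W_h^{2,p}(\Omega)}
+h^{2}\|v_h\|_{W_h^{2,p}(\Omega)}^{2}.
\]

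The main obstacle will be to absorb the two residual terms on the right into the target $\|v_h\|_{L^p}\,\|v_h\|_{W_h^{2,p}}$. For this I would invoke two DG inverse estimates. First, the standard inverse inequality $\|v_h\|_{W_h^{1,p}(\Omega)}\lesssim h^{-1}\|v_h\|_{L^p(\Omega)}$, which follows from element-wise inverse estimates combined with \eqref{eqn2:3} applied to the jump and average face terms, immediately yields $h\|v_h\|_{W_h^{1,p}}\,\|v_h\|_{W_h^{2,p}}\lesssim \|v_h\|_{L^p}\,\|v_h\|_{W_h^{2,p}}$, handling the first residual. Second, chaining this inverse with Lemma~\ref{lem2:1} (which gives $h\|v_h\|_{W_h^{2,p}}\lesssim \|v_h\|_{W_h^{1,p}}$) produces
\[
h^{2}\|v_h\|_{W_h^{2,p}}^{2}
\lesssim h\|v_h\|_{W_h^{1,p}}\,\|v_h\|_{W_h^{2,p}}
\lesssim \|v_h\|_{L^p}\,\|v_h\|_{W_h^{2,p}},
\]
handling the second residual and closing the estimate. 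The delicate point is the bookkeeping: each of the three reconstruction bounds must pick up exactly the right power of $h$, and both inverse estimates are needed so that the $h^{2}\|v_h\|_{W_h^{2,p}}^{2}$ term dissolves cleanly into the target right-hand side rather than producing a circular absorption.
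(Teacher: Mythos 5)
Your proposal is correct and follows essentially the same route as the paper: lift $v_h$ to $H^2_0(\Omega)$ via the enriching operator $E_h$ of Lemma~\ref{lem2:55}, apply the classical Gagliardo--Nirenberg inequality to $E_hv_h$, and control both the reconstruction error $v_h-E_hv_h$ and the residual $h$-weighted terms with the bounds \eqref{EhStability} together with inverse estimates (including Lemma~\ref{lem2:1}). Your version merely makes explicit two points the paper compresses into ``scaling,'' namely the treatment of the face-average term in $\|E_hv_h\|_{W^{1,p}_h(\Omega)}$ and the absorption of $h\|v_h\|_{W^{1,p}_h(\Omega)}$ coming from $\|E_hv_h\|_{L^p(\Omega)}$.
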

\begin{proof}
Let $E_h:V_h\to \tilde{V}_h$ be the enriching operator
in the proof of Lemma \ref{lem2:55}.
By the triangle inequality and scaling we find
\begin{align}\label{Line1ABC}
\|v_h\|_{W^{1,p}_h(\Omega)}^2 \les \|v_h-E_h v_h\|_{W^{1,p}_h(\Omega)}^2 + \|E_h v_h\|_{W^{1,p}(\Omega)}^2.
\end{align}
Since $E_h v_h\in W^{2,p}(\Omega)$ we can apply the  Gagliardo--Nirenberg estimate \cite{BrezisBook} to get
\begin{align*}
\|E_h v_h\|_{W^{1,p}(\Omega)}^2\les \|E_hv_h\|_{W^{2,p}(\Omega)} \|E v_h\|_{L^p(\Omega)}.
\end{align*}
Applying  estimates \eqref{EhStability}, we conclude that
\begin{align}\label{Line2ABC}
\| E_h v_h\|_{W^{1,p}(\Omega)}^2\les \|v_h\|_{W^{2,p}_h(\Omega)} \|v_h\|_{L^p(\Omega)}.
\end{align}
Likewise, by \eqref{EhStability} and an inverse estimate,
%We also have, by the inverse estimate \eqref{InverseEstimate},
\begin{align}\label{Line3ABC}
\|v_h-E_h v_h\|_{W^{1,p}_h(\Omega)}^2 \les h^2 \|v_h \|_{W^{2,p}_h(\Omega)}^2\les \|v_h\|_{L^p(\Omega)} \|v_h\|_{W^{2,p}_h(\Omega)}.
\end{align}
Combining \eqref{Line1ABC}--\eqref{Line3ABC} completes the proof.
\end{proof}

Next we prove some local super approximation estimates for the DG nodal interpolation in various discrete norms.  
The derivation of the lemma is standard (cf. \cite{NitscheSchatz74});
{for completeness we provide the proof in the Appendix}.
%%%%%%%%%%%%%%%%%%
%%% lem2:4 %%%%%%%%%%%
%%%%%%%%%%%%%%%%%%
\begin{lemma} \label{lem2:4}
Let ${I}_h:C^0(\Th):=\Pi_{T\in \Th} C^0(\overline{T}) \to V_h$ denote the nodal interpolation operator, 
and $\eta\in C^{\infty}(\W)$ with $|\eta|_{W^{j,\infty}(\W)}\lss d^{-j}$ for $0\leq j\leq k$. Then for 
any $v_h\in V_h$ and  $D\subseteq\W$ we have
\begin{align}
\|\eta v_h-\tI(\eta v_h)\|_{L^p(D)}&\lss \frac{h}{d}\|v_h\|_{L^p(D_h)}, \label{eqn2:37}\\
h\|\grad_h(\eta v_h-\tI(\eta v_h))\|_{L^p(D)}&\lss \frac{h}{d}\|v_h\|_{L^p(D_h)}, \label{eqn2:38}\\
h^2\|D_h^2(\eta v_h-\tI(\eta v_h))\|_{L^p(D)}&\lss \frac{h}{d}\|v_h\|_{L^p(D_h)}, \label{eqn2:38.5}\\
\|\eta v_h-\tI(\eta v_h)\|_{\Wh(D)}&\lss \frac{1}{d^2}\left(\|v_h\|_{L^p(D_h)}
+\|\grad_h v_h\|_{L^p(D_h)}\right),\label{eqn2:39}
\end{align}
where $D_h$ is the same as in Lemma \ref{lem2:1}.  Moreover, there holds
\begin{align} \label{eqn2:43}
\|\eta v_h-\tI(\eta v_h)\|_{\Wh(D)} \lss \dfrac{h}{d^3}\|v_h\|_{\Wh(D_h)} 
\end{align}
if the polynomial degree $k\geq 2$.  
\end{lemma}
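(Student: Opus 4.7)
The plan is to work element-by-element for the bulk contributions and edge-by-edge for the jump contributions, all driven by one Bramble--Hilbert-type identity. On each $T\in\Th$, for any constant $c_T$ one has $\tI(c_T v_h) = c_T v_h$ because $c_T v_h\in\mathbb{P}_k(T)$, so choosing $c_T := \eta(x_T)$ for a fixed point $x_T\in T$ gives
$$
\eta v_h - \tI(\eta v_h) = (\mathrm{Id} - \tI)\bigl((\eta - c_T) v_h\bigr) \quad \text{on } T.
$$
An analogous identity holds on each interior edge $e=\partial T^+\cap\partial T^-$: a direct calculation from the nodal definition of $\tI$ shows $[\tI(\eta v_h)]|_e = \tilde I_e(\eta\,[v_h])$, where $\tilde I_e$ is the one-dimensional Lagrange interpolant on $e$ into $\mathbb{P}_k(e)$, and since $[v_h]|_e\in\mathbb{P}_k(e)$ this yields $[\eta v_h - \tI(\eta v_h)]|_e = (\mathrm{Id} - \tilde I_e)\bigl((\eta - c_e)[v_h]\bigr)$ for $c_e := \eta(x_e)$ with $x_e\in\bar e$.

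For estimates \eqref{eqn2:37}--\eqref{eqn2:38.5} I would insert the standard nodal interpolation error $|(\mathrm{Id}-\tI)w|_{W^{m,p}(T)} \lss h^{k+1-m}|w|_{W^{k+1,p}(T)}$ applied to $w = (\eta-c_T)v_h$ and expand $D^{k+1}w$ by the Leibniz rule. Because $D^\beta v_h = 0$ for $|\beta|>k$ and $c_T$ is constant, only terms with $|\alpha|\ge 1$ survive, so every surviving $D^\alpha\eta$ is bounded by $d^{-|\alpha|}$. Combining this with the polynomial inverse $\|D^\beta v_h\|_{L^p(T)}\lss h^{-|\beta|}\|v_h\|_{L^p(T)}$ and the assumption $h\le d$ yields $|(\eta-c_T)v_h|_{W^{k+1,p}(T)}\lss h^{-(k+1)}(h/d)\|v_h\|_{L^p(T)}$, and \eqref{eqn2:37}--\eqref{eqn2:38.5} follow by taking $m=0,1,2$ and summing over those $T$ meeting $D$ (whose union is contained in $D_h$).

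Estimate \eqref{eqn2:39} requires a refinement, since the previous uniform bound on $\|D_h^2 w\|_{L^p(D)}$ falls short of the target $d^{-2}$. I would split the Leibniz sum according to $|\beta|$: keep the $|\beta|=0$ term bounded by $d^{-(k+1)}\|v_h\|_{L^p(T)}$, and for $|\beta|\ge 1$ rewrite $D^\beta v_h = D^{|\beta|-1}\nabla v_h$ and apply the inverse to obtain $\lss d^{-(k+1-|\beta|)}h^{-(|\beta|-1)}\|\nabla v_h\|_{L^p(T)}$. Multiplying by $h^{k-1}$ and invoking $h\le d$ and $d\lss 1$ then gives the bulk bound $\|D_h^2 w\|_{L^p(T)}\lss d^{-2}\bigl(\|v_h\|_{L^p(T)}+\|\nabla v_h\|_{L^p(T)}\bigr)$. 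Estimate \eqref{eqn2:43} comes from the same splitting together with the additional rewriting $D^\beta v_h = D^{|\beta|-2}(D_h^2 v_h)$ for $|\beta|\ge 2$ (possible because $k\ge 2$), which trades one more factor of $h/d$ for a $\|D_h^2 v_h\|_{L^p(T)}$ norm and supplies the $h/d^3$ prefactor.

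The main obstacle is the jump contributions in \eqref{eqn2:39} and \eqref{eqn2:43}: a naive bound $\|[w]\|_{L^p(e)}\lss (h/d)\|[v_h]\|_{L^p(e)}$ followed by the polynomial trace-inverse $\|[v_h]\|_{L^p(e)}\lss h^{-1/p}\|v_h\|_{L^p(T)}$ loses a factor $d/h$ against $d^{-2}$. The remedy is to keep the full Leibniz expansion of $D_e^{k+1}\bigl((\eta-c_e)[v_h]\bigr)$ on $e$ and to estimate each tangential-derivative term separately via the inverse and polynomial trace-inverse: $\|D_e^r([v_h])\|_{L^p(e)}\lss h^{-1/p}\|v_h\|_{L^p(T)}$ for $r=0$, and $\lss h^{-1/p-(r-1)}\|\nabla v_h\|_{L^p(T)}$ for $r\ge 1$. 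Weighting each contribution by $\gamma_e^p h_e^{1-2p}$ and summing shows the $r=0$ contribution is $\lss d^{-2}\|v_h\|_{L^p(D_h)}$ and each $r\ge 1$ contribution is $\lss d^{-2}\|\nabla_h v_h\|_{L^p(D_h)}$; the jumps of $\nabla w$ are handled identically after one further differentiation, and the analogous termwise analysis reducing to $\|D_h^2 v_h\|_{L^p(T)}$ produces the jump portion of \eqref{eqn2:43}.
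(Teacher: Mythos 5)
Your treatment of the volume terms is essentially the paper's argument. The subtraction of $c_T=\eta(x_T)$ is harmless but superfluous: in the Leibniz expansion of $D^{k+1}(\eta v_h)$ the $|\alpha|=0$ term already vanishes because $D^{k+1}v_h=0$, which is exactly how the paper proceeds (it applies the nodal interpolation estimate \eqref{eqn2:40} directly to $\eta v_h$). Your refined splitting of the Leibniz sum according to $|\beta|$ for \eqref{eqn2:39} and \eqref{eqn2:43} reproduces the paper's inequalities \eqref{eqn2:42} and \eqref{eqn2:44}.

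The divergence --- and the gap --- is in the jump terms. The ``obstacle'' you describe is an artifact of your own first step of bounding $[\eta v_h-\tI(\eta v_h)]$ on $e$ through $[v_h]$ on $e$; the paper sidesteps it entirely by applying the element trace inequality \eqref{eqn2:3} to $w:=\eta v_h-\tI(\eta v_h)$ and to $\nabla w$, which gives
\begin{align*}
h_e^{1-2p}\|[w]\|^p_{L^p(e)}+h_e^{1-p}\|[\nabla w]\|^p_{L^p(e)}
\lss \|D^2w\|^p_{L^p(T)}+h^{-p}\|\nabla w\|^p_{L^p(T)}+h^{-2p}\|w\|^p_{L^p(T)},
\end{align*}
and each right-hand term is $\lss h^{p(k-1)}|\eta v_h|^p_{W^{k+1,p}(T)}$ by \eqref{eqn2:40}, hence controlled by the volume bounds you already established. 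Your alternative edge-interpolation route does work for the jumps of $w$ itself (for simplicial Lagrange elements the restriction of $\tI$ to a face is the face interpolant of the restriction, so $[\tI(\eta v_h)]|_e=\tilde I_e(\eta[v_h])$ is legitimate, and your termwise tangential analysis closes), but the claim that the jumps of $\nabla w$ are ``handled identically after one further differentiation'' fails: the normal component of $\nabla\tI(\eta v_h)$ on $e$ is not determined by the nodes lying on $e$, so there is no identity of the form $[\partial_{\nu}\tI(\eta v_h)]|_e=(\mathrm{Id}-\tilde I_e)(\cdots)$, and a purely tangential analysis on $e$ cannot control the normal part of $[\nabla w]$, which is part of the $\|\cdot\|_{\Wh}$-norm. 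To close that term you must fall back on the volume trace inequality anyway, at which point the edge-wise machinery becomes unnecessary.
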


%%%%%%%%%%%%%%%%%%%%%%%%%%%%%%%%%%%%%%%
\section{DG discrete $W^{1,p}$ and Calderon-Zygmund estimates for PDEs with constant coefficients}\label{sec:NewConstant}
In this section we consider the constant coefficient case, that is, $A(x)\equiv A_0\in \mathbb{R}^{n\times n}$ on $\W$.  
We { define three} interior-penalty discontinuous Galerkin %(SIP-DG) 
discretizations $\L_{0,h}^{\e}$ to the PDE operator $\L$ and extend their domains to the broken Sobolev 
space $W^{2,p}(\Th)$. Our goal in this subsection is to prove global  
stability estimates for $\L_{0,h}^{\e}$ which will be crucially used in the next section. The 
final global stability estimate given in Theorem \ref{lem2:7} can be regarded as a DG 
discrete Calderon-Zygmund estimate for $\L_{0,h}^{\e}$.

Let $A_0$ be a constant, positive-definite matrix in $\R^{n\times n}$ and define
\begin{align}
\L_0 w:=-A_0:D^2w=-\grad\cdot(A_0\grad w).
\end{align}
From this we gather the standard PDE weak form:
\begin{align}
a_0(w,v):=\int_{\Omega}A_0\grad w\cdot\grad v\dx[x] \qquad\forall w,v\in H_0^1(\W).
\end{align}
The Lax-Milgram theorem \cite{Sp:BS} yields the existence and boundedness of $\L_0^{-1}:H^{-1}(\W)\to H_0^1(\W)$.  
Moreover if $\partial\W\in C^{1,1}$ we have from Calderon-Zygmund theory \cite{Sp:DT} that 
$\L_0^{-1}:L^p(\W)\to W^{2,p}(\W)\cap W_0^{1,p}(\W)$ exists and
\begin{align*}
	\|\L_{0}^{-1}\phi\|_{W^{2,p}(\W)}\lss \|\phi\|_{L^p(\W)} \qquad\forall \phi\in L^p(\W),
\end{align*}
and therefore
\begin{align*}
	\|w\|_{W^{2,p}(\W)}\lss \|\L_0 w\|_{L^p(\W)} \qquad\forall w\in W^{2,p}(\W)\cap W_0^{1,p}(\W). \label{eqn2:9}
\end{align*}

Define $\L_{0,h}^{\e}:V_h\to V_h$ by 
\begin{align}
 \bigl(\L_{0,h}^{\e}w_h&,v_h\bigr) := a_{0,h}^{\e}(w_h,v_h) \qquad\forall v_h,w_h\in V_h,
\end{align}
where the IP-DG bilinear form is defined by
\begin{align} \label{eqn2:7}
a_{0,h}(w_h,v_h) := &\int_\Omega A_0\grad_h w_h \cdot\grad_h v_h \dx[x]- \sum_{e\in\E}\int_e \{ A_0\grad w_h\dv \} [v_h] \dx[S] \\
& -\e \sum_{e\in\E}\int_e \{A_0\grad v_h\dv\}[w_h]\dx[S]+\sum_{e\in\E}\int_e \frac{\ge}{h_e}[w_h][v_h] \dx[S],  \nonumber
%&\nonumber
%+ \sum_{e\in\EB}\int_e \frac{\ge}{h_e}w_hv_h \dx[S],\nonumber
\end{align}
and $\gamma_e>0$ is a penalization parameter.
The parameter choices $\e \in \{ 1,0, -1\}$ 
give respectively the SIP-DG, IIP- DG, and NIP-DG formulations. For the sake of clarity and readability 
we shall assume for the rest of the paper that $\e$ may be either $1$, $0$, or $-1$ unless otherwise stated. 

We recall the following well-known DG integration by parts formula:
\begin{align}
\int_{\W}\tau\cdot\grad_h v \dx[x]= -\int_{\W}&(\grad_h\cdot\tau)v\dx[x] 
+\sum_{e\in\EI}\int_e [\tau\dv]\{v\} \dx[S] +\sum_{e\in\E}\int_e\{\tau\dv\}[v]\dx[S],\label{eqn2:8}
%&+%\sum_{e\in\EB}\int_e (\tau\dv)v\dx[S], \nonumber
\end{align}
which holds for any piecewise scalar-valued function $v$ and vector-valued function $\tau$.  
Applying \eqref{eqn2:8} to the first term on the right-hand side of \eqref{eqn2:7} yields 
\begin{align}
a_{0,h}^{\e}(w_h,v_h) =& -\int_{\W}(A_0:D_h^2w_h)v_h \dx[x] + \sum_{e\in\EI}\int_e [ A_0\grad w_h\dv] \{v_h\} \dx[S] 
\label{eqn5}\\ 
&-\e \sum_{e\in\E}\int_e \{A_0\grad v_h\dv\}[w_h]\dx[S]+\sum_{e\in\E}\int_e \frac{\ge}{h_e}[w_h][v_h] \dx[S]\nonumber %- \e\sum_{e\in\EB}\int_e A_0\grad v_h\dv w_h \dx[S] \nonumber \\
%&% + \sum_{e\in\EB}\int_e \frac{\ge}{h_e}w_hv_h \dx[S] \nonumber 
\end{align}
for any $w_h,v_h\in V_h$. By H\"older's inequality, it is easy to check that the above new form of $a_{0,h}^{\e}(\cdot,\cdot)$ 
is also well-defined on $W^{2,p}(\Th)\times W^{2,p'}(\Th)$ with $\frac{1}{p}+\frac{1}{p'}=1$.
As a result, this new form enables us to extend the domain of $a_{0,h}^{\e}(\cdot,\cdot)$ to
$W^{2,p}(\Th)\times W^{2,p'}(\Th)$ {and $\L^\e_{0,h}: W^{2,p}(\Th)\to (W^{2,p}(\Th))^*$.} 

%%%%%%%%%%%%%%%%%%%%%%%%%%%%%%%%%%%%%%%%%%%%%%%%
\subsection{DG discrete $W^{1,p}$ error estimates}
From the standard IP-DG theory \cite{F:BR}, there exists $\gamma^* = \gamma^*(\|A_0\|_{L^\infty(\Omega)},
\mathcal{T}_h)>0$ depending only on the shape regularity of the mesh and on $\|A_0\|_{L^\infty(\Omega)}$
such that $\L_{0,h}^{\e}$ is invertible on $V_h$ provided $\ge\geq\gamma^*$; in the non--symmetric case $\e=-1$, 
$\gamma^*$ can be any positive number.  Moreover, if { $w\in W^{2,2}(\mct)\cap H^1_0(\W)$} and $w_h\in V_h$ 
satisfy
\begin{align}
\label{ErrorEquation}
a^\e_{0,h}(w-w_h,v_h)=0\qquad \forall v_h\in V_h,
\end{align}
then the quasi-optimal error estimate
\begin{align}\label{W12ErrorEstimate}
\|w-w_h\|_{W^{1,2}_h(\Omega)}\lss \inf_{v_h\in V_h}  \|w-v_h\|_{W^{1,2}_h(\Omega)}
\end{align}
is satisfied.  The goal of this subsection is to generalize this result to general exponent  $p\in (1,\infty)$ 
for the SIP-DG method. In particular, we have

%%%%%%%%%%%%%%%%%%%%%
%%% W1pTheorem %%%%%%%%%%%
%%%%%%%%%%%%%%%%%%%%%
\begin{theorem}\label{W1pTheorem}
Suppose {$w\in W^{2,p}(\W)\cap W^{1,p}_0(\Omega)\ (1<p<\infty)$}
and $w_h\in V_h$ satisfy \eqref{ErrorEquation} with $\e=1$.  Then there holds
\begin{align}\label{W1pTheoremMainLine}
\|w-w_h\|_{W^{1,p}_h(\Omega)}\lss h |\log h|^{t} \|w\|_{W^{2,p}(\Omega)},
\end{align}
where $t=(p+1)/p$ if $k=1$ and $t=0$ {if $k\geq 2$.}
\end{theorem}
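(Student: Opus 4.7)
The plan is to combine a standard duality argument with the discrete Gagliardo--Nirenberg inequality of Lemma~\ref{lem2:5}, closing the resulting circular bound by refining the continuity of the SIP-DG form via the super-approximation estimates in Lemma~\ref{lem2:4}. Decompose the error as $w - w_h = \eta + \xi$, where $\eta := w - I_h w$ and $\xi := I_h w - w_h \in V_h$. Standard polynomial interpolation gives $\|\eta\|_{W^{1,p}_h(\W)} \lesssim h\|w\|_{W^{2,p}(\W)}$, so it suffices to bound $\|\xi\|_{W^{1,p}_h(\W)}$. From \eqref{ErrorEquation} one has $a^1_{0,h}(\xi, v_h) = -a^1_{0,h}(\eta, v_h)$ for all $v_h \in V_h$.

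Next I would carry out an $L^p$ duality. For $g \in L^{p'}(\W)$, let $\phi \in W^{2,p'}(\W) \cap W^{1,p'}_0(\W)$ solve $\L_0 \phi = g$; Calderon--Zygmund regularity for the constant-coefficient problem on a smooth domain gives $\|\phi\|_{W^{2,p'}(\W)} \lesssim \|g\|_{L^{p'}(\W)}$. Because $[\phi] = 0$ and $A_0$ is constant, the DG Green's identity \eqref{eqn2:8} together with the symmetry of the SIP-DG form ($\e = 1$) yields the consistency relation $a^1_{0,h}(v_h, \phi) = (g, v_h)$ for all $v_h \in V_h$. Galerkin orthogonality then produces
$$(\xi, g) = -a^1_{0,h}(\eta, \phi) + a^1_{0,h}(w - w_h, \phi - I_h \phi).$$
Continuity of $a^1_{0,h}$ on $W^{1,p}_h \times W^{1,p'}_h$ (via H\"older and the trace inequality of Lemma~\ref{lem2:2}) combined with $\|\phi - I_h\phi\|_{W^{1,p'}_h} \lesssim h\|\phi\|_{W^{2,p'}}$ yields an $L^p$ bound of the form $\|\xi\|_{L^p(\W)} \lesssim h^2\|w\|_{W^{2,p}(\W)} + h\|\xi\|_{W^{1,p}_h(\W)}$.

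To convert this into the target $W^{1,p}_h$ bound, I would apply Lemma~\ref{lem2:5} and the inverse estimate of Lemma~\ref{lem2:1} to obtain $\|\xi\|_{W^{1,p}_h(\W)} \lesssim h^{-1}\|\xi\|_{L^p(\W)}$; substitution alone gives a circular bound, and closing the loop requires sharpening the continuity estimate so that the coefficient of $\|\xi\|_{W^{1,p}_h(\W)}$ on the right-hand side is genuinely small. This is achieved by invoking Lemma~\ref{lem2:4} on the dual residual $\phi - I_h \phi$, whose super-approximation bounds gain an extra factor of $h$ at the price of a possibly logarithmic constant. For $k \geq 2$ a single application suffices and no logarithmic factor appears ($t = 0$); for $k = 1$, the piecewise-linear DG space forces iteration of the super-approximation step, with each iteration contributing a factor that accumulates to $|\log h|^{(p+1)/p}$. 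The main obstacle is precisely this final refinement: the absence of a supercloseness-type estimate for linear DG polynomials makes the logarithmic loss for $k = 1$ a genuine feature rather than a proof artifact, and tracking the exact exponent $t = (p+1)/p$ demands careful accounting of the constants propagated through Lemma~\ref{lem2:4}.
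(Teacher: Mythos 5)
Your proposed mechanism does not work, and the gap is exactly the one the paper itself flags. The duality step you describe gives, after Galerkin orthogonality and continuity of $a^1_{0,h}$ on $W^{1,p}_h\times W^{1,p'}_h$, precisely the estimate $\|w-w_h\|_{L^p(\W)}\lss h\,\|w-w_h\|_{W^{1,p}_h(\W)}$ (your version with $\xi$ is the same bound in disguise). Combined with the inverse estimate this is circular with an $O(1)$ coefficient, and your plan to shrink that coefficient by applying Lemma \ref{lem2:4} to $\phi-I_h\phi$ cannot succeed: Lemma \ref{lem2:4} is a super-approximation estimate for products $\eta v_h$ of a \emph{smooth cutoff} $\eta$ with a \emph{discrete} function $v_h$ (the gain comes from $|v_h|_{W^{k+1,p}(T)}=0$), whereas the dual solution $\phi$ has only $W^{2,p'}$ regularity, so $\|\phi-I_h\phi\|_{W^{1,p'}_h}\lss h\|\phi\|_{W^{2,p'}}$ is already saturated and there is no additional power of $h$ to extract. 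Consequently there is no iteration whose accumulated constants could produce $|\log h|^{(p+1)/p}$; that part of the argument has no mechanism behind it.

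The paper's actual route is entirely different and is the substance of the result. For $p\ge 2$ it localizes: one bounds $|\nab(w-w_h)(z)|$ and $|(w-w_h)(z)|$ at each point $z$ by weighted norms $\|w-v_h\|_{W^{1,p}_h(\W),z,s}$ with weight $\sigma_z=h/(|x-z|+h)$, using regularized Green's functions $g_z,\hat g_z$, dyadic decompositions of $\W$ about $z$, the local energy error estimates of Chen--Chen, and a kick-back in the parameter $M$ (Lemmas \ref{PointWiseLemma} and \ref{ezhatboundlemma}); one then raises to the $p$-th power, integrates in $z$, and uses the Fubini-type Lemma \ref{FubiniLemma}. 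The logarithm for $k=1$ arises because the admissible weight exponent is forced to the critical value $s=n/p$, which makes both $\bar s(p)=1$ in the pointwise bound and the integral $\int\sigma_z^{sp}$ logarithmically divergent; the exponent $(p+1)/p$ is $(p+1)$ powers of $|\log h|$ inside the $p$-th power of the $L^p$ error, rooted out. For $1<p<2$ a separate discrete inf-sup/duality argument (Lemma \ref{infsupAuxLem}) is needed. If you want to salvage a duality-only proof you would first need a genuine $W^{1,p}_h\!-\!W^{1,p'}_h$ inf-sup condition for $a^1_{0,h}$ with $p\neq 2$, but establishing that is equivalent in difficulty to the theorem itself.
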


To prove Theorem \ref{W1pTheorem}
we introduce some notation given in \cite{ChenChen04} 
(also see \cite{Schatz98}).  For given $z\in \overline{\Omega}$, we define the weight function $\sigma_z$ as
\begin{align}\label{sigma}
\sigma_z(x) = \frac{h}{|x-z|+h}.
\end{align}
%
%\begin{remark}
%Note that $\sigma_z(x) = O(1)$ provided $|x-z|=O(h)$, while $\sigma_z(x)=O(h)$ otherwise.
%\end{remark}

For $p\in [1,\infty)$, and $s\in \mathbb{R}$, we define the following weighted norms
\begin{align}
\nonumber
\|v\|_{L^p(D),z,s} 
&= \left(\int_D \big|\sigma_z^s(x) v(x)\big|^p\, \dx[x]\right)^{{1}/p},\\
\nonumber
\|v\|_{W^{1,p}(D),z,s} 
& = \|v\|_{L^p(D),z,s} + \|\grad_h v\|_{L^p(D),z,s},\\
\label{weightedhnorm}
\|v\|_{W^{1,p}_h(D),z,s}
& = \|v\|_{W^{1,p}(D),z,s} + \Big(\sum_{e\in \mathcal{E}_h} h_e^{1-p} 
\big\|\sigma^s_z [v]\big\|_{L^p(e\cap \overline{D})}^p\Big)^{1/p}\\
&\nonumber \qquad 
+\Big(\sum_{e\in \mathcal{E}_h} h_e \big\|\sigma_z^s \{\grad_h v\}\big\|_{L^p(e\cap \overline{D})}^p\Big)^{1/p}.
\end{align}
The weighted norms in the case $p=\infty$ are defined analogously.

The derivation of $W^{1,p}$ error estimates of DG approximations is based on 
the work \cite{ChenChen04}, where localized pointwise
estimates of DG approximations are obtained.  There
it was shown that if $w\in W^{2,\infty}(\Omega)$
and $w_h\in V_h$ satisfy \eqref{ErrorEquation} with $\e=1$, then
\begin{align}\label{GradPointwiseChen}
|\nab (w-w_h)(z)|\lss \inf_{v_h\in V_h } \|w-v_h\|_{W^{1,\infty}_h(\Omega),z,s}\quad 0\le s< k
\end{align}
for all $z\in \overline{\Omega}$.
Similar to pointwise estimates of finite element approximations (e.g., \cite{Schatz98,Sp:BS}),
the ingredients to prove \eqref{GradPointwiseChen} include duality arguments
and DG approximation estimates of regularized Green functions 
in a weighted (discrete) $W^{1,1}$-norm.  These results are rather technical
and involve dyadic decompositions of $\Omega$, local DG error estimates, and Green function estimates. 

Here, we follow a similar argument to derive $W^{1,p}$ estimates; 
the main difference being that we derive
DG approximation estimates of regularized Green functions
in a weighted (discrete) $W^{1,p'}$-norm with $1/p+1/p'=1$
(cf. Lemma \ref{ezhatboundlemma}).  Using these estimates
and applying similar arguments in \cite{Schatz98,ChenChen04} 
then yield the estimate
\begin{align*}
|\nab (w-w_h)(z)|^p\lss h^{-n} \inf_{v_h\in V_h} \|w-v_h\|^p_{W^{1,p}_h(\Omega),z,s}
\end{align*}
for certain values of $s$.  Integrating this expression with respect
to $z$ and applying Fubini's theorem (cf. Lemma \ref{FubiniLemma})
then yields $L^p$ estimates of the piecewise gradient error.
%(see \cite[8.x.1]{Sp:BS} and \cite{RanScott82} where similar
%arguments are used).

Unfortunately, the strategy just described does
not immediately give us estimates for the terms $h_e^{1-p}\|[w-w_h]\|^p_{L^p(e)}$
appearing in the $W^{1,p}_h$-norm.  To bypass this difficulty,
%derive upper bounds for these terms, 
we first use the trace inequality
\begin{align*}
\sum_{e\in \mce} h_e^{1-p}\|[w-w_h]\|_{L^p(e)}^p \lss  \|\nab_h (w-w_h)\|^p_{L^p(\Omega)}+ h^{-p}\|w-w_h\|^p_{L^p(\Omega)},
\end{align*}
and then derive estimates for $h^{-p} \|w-w_h\|^p_{L^p(\Omega)}$.  
We note that the standard duality argument to derive $L^p$ estimates yields %the estimate
\begin{align*}
\|w-w_h\|_{L^p(\Omega)}\lss  h \|w-w_h\|_{W^{1,p}_h(\Omega)},
\end{align*}
which is of little benefit.  Rather, our strategy is to modify
the arguments given in \cite[Theorem 5.1]{ChenChen04}
and  estimate $|(w-w_h)(z)|$ in terms of $\inf_{v_h\in V_h} \|w-v_h\|_{W^{1,p}_h(\Omega),z,s}$
(cf. Lemma \ref{PointWiseLemma})
and then apply Fubini's theorem.
We note that  it is due to this term that the $|\log h|^t$ factor
appears in Theorem \ref{W1pTheorem}.

%consists of three main
%ingredients.  The first result (Lemma \ref{FubiniLemma})
%essentially follows from Fubini's Theorem.  
%The second result
% (Lemma \ref{PointWiseLemma}) is an 
% extension of \cite[Theorem 5.1]{ChenChen04} where
% the case $p=\infty$ is given. 
%The third
%result  (Lemma \ref{ezhatboundlemma})
%is a  generalization
% of \cite[Lemma 5.4]{ChenChen04} where the case $p^\prime=1$
% is shown.  
%  {\color{blue}The proofs
% of these results can be found in \cite{ArXivPreprint}.}

%%%%%%%%%%%%%%%%%%%%%%
%%% FubiniLemma %%%%%%%%%%%
%%%%%%%%%%%%%%%%%%%%%%
\begin{lemma}\label{FubiniLemma}
Let $p\in [2,\infty)$ and $v\in L^p(\Ome)$.  Let $z\in \Omega$
and $T_z\in \mct$ such that $z\in T_z$.
Then there holds
\begin{align}\label{Fubini1}
\int_\Ome \int_{T_z} |v(x)|^p \dx[x] \dx[z] \les h^n \|v\|^p_{L^p(\Ome)}.
\end{align}
Moreover for any $s>n/p$ and $w\in W^{2,p}(\mct)$, there holds
\begin{align}\label{Fubini2}
\int_\Ome \|v\|_{W^{1,p}_h(\Ome),z,s}^p \dx[z] &\les \frac{h^n}{ps-n} \big(\|\nab_h v\|^p_{L^{p}(\Ome)}
+h^{-p}\|v\|^p_{L^p(\Omega)}+h^p \|D^2_h v\|^p_{L^p(\Omega)}\big).
\end{align}
If $s=n/p$, then we have
\begin{align}\label{Fubini3}
\int_\Ome \|v\|_{W^{1,p}_h(\Ome),z,n/p}^p \dx[z] &\les |\log h| {h^n} \big(\|\nab_h v\|^p_{L^{p}(\Ome)}
+h^{-p}\|v\|^p_{L^p(\Omega)}+h^p \|D^2_h v\|^p_{L^p(\Omega)}\big).
\end{align}
\end{lemma}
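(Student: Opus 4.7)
The plan is to apply Fubini's theorem throughout and reduce matters to a single scalar integral of the weight $\sigma_z^{sp}$. For \eqref{Fubini1}, I first observe that for fixed $x\in\Omega$ the set of $z\in\Omega$ with $x\in T_z$ is exactly the element $T_x$ containing $x$. Switching the order of integration then gives
\begin{align*}
\int_\Omega\int_{T_z}|v(x)|^p\,\dx[x]\,\dx[z]=\int_\Omega |v(x)|^p\,|T_x|\,\dx[x]\les h^n\|v\|_{L^p(\Omega)}^p,
\end{align*}
by the shape regularity of $\mct$.

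For \eqref{Fubini2} and \eqref{Fubini3} the key ingredient is the scalar estimate
\begin{align*}
I_s(x):=\int_\Omega \sigma_z^{sp}(x)\,\dx[z]\les
\begin{cases} h^n/(ps-n) & \text{if } ps>n,\\ h^n|\log h| & \text{if } ps=n,\end{cases}
\end{align*}
uniform in $x\in\overline{\Omega}$. I would establish this by translating $y=z-x$, passing to polar coordinates, and splitting the radial integral at $r=h$: on $[0,h]$ the trivial bound $\sigma_z^{sp}\le 1$ contributes $\les h^n$, while on $[h,R]$ with $R=\mathrm{diam}(\Omega)$ the bound $\sigma_z^{sp}\le h^{sp}|y|^{-sp}$ reduces the tail to $h^{sp}\int_h^R r^{n-1-sp}\,\dx[r]$, which equals $(ps-n)^{-1}(h^n-h^{sp}R^{n-sp})$ when $ps>n$ and $h^n\log(R/h)\les h^n|\log h|$ when $ps=n$.

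Granted this scalar bound, I apply Fubini term by term to the definition \eqref{weightedhnorm} of $\|\cdot\|_{W^{1,p}_h(\Omega),z,s}^p$. The two volume contributions yield
\begin{align*}
\int_\Omega\bigl(\|v\|_{L^p(\Omega),z,s}^p+\|\nabla_h v\|_{L^p(\Omega),z,s}^p\bigr)\dx[z]
\les I_s^\ast\bigl(\|v\|_{L^p(\Omega)}^p+\|\nabla_h v\|_{L^p(\Omega)}^p\bigr),
\end{align*}
where $I_s^\ast$ denotes the right-hand side of the bound on $I_s$. Fubini and the same $I_s$ bound (which also holds uniformly for $x$ lying on an edge) reduce the two edge contributions to
\begin{align*}
I_s^\ast\,h^{-p}\sum_{e\in\mce}h_e\|[v]\|_{L^p(e)}^p
\quad\text{and}\quad
I_s^\ast\sum_{e\in\mce}h_e\|\{\nabla_h v\}\|_{L^p(e)}^p,
\end{align*}
after which I would invoke Lemma~\ref{lem2:2} (the second branch in \eqref{eqn2:12}) to bound these by $I_s^\ast\bigl(h^{-p}\|v\|_{L^p(\Omega)}^p+\|\nabla_h v\|_{L^p(\Omega)}^p\bigr)$ and $I_s^\ast\bigl(\|\nabla_h v\|_{L^p(\Omega)}^p+h^p\|D_h^2 v\|_{L^p(\Omega)}^p\bigr)$, respectively; the second estimate follows by applying \eqref{eqn2:12} componentwise to $\nabla_h v\in W^{1,p}_h(\Omega)$. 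Adding the four contributions and absorbing lower-order terms into $h^{-p}\|v\|_{L^p(\Omega)}^p+\|\nabla_h v\|_{L^p(\Omega)}^p+h^p\|D_h^2 v\|_{L^p(\Omega)}^p$ produces both \eqref{Fubini2} and \eqref{Fubini3}.

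The only nontrivial step is the scalar estimate for $I_s(x)$; in particular, the logarithmic factor in \eqref{Fubini3} is genuine and comes from $\int_h^R r^{-1}\,\dx[r]=\log(R/h)$ in the borderline case $ps=n$. Once $I_s$ is in hand, the remainder of the argument is routine application of Fubini together with the trace inequality already recorded in Lemma~\ref{lem2:2}.
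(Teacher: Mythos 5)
Your proof is correct and follows essentially the same route as the paper's: Fubini's theorem combined with the uniform bound $\int_\Omega \sigma_z^{sp}(x)\,\dx[z]\lesssim h^n/(sp-n)$ (resp. $h^n|\log h|$ when $sp=n$) and the trace inequality of Lemma \ref{lem2:2}. The only cosmetic difference is in the edge terms, where you integrate out the weight first and then apply the unweighted trace inequality, while the paper applies the trace inequality to $\sigma_z^s v$ pointwise in $z$ (using $|\nabla \sigma_z^s|\lesssim h^{-1}\sigma_z^s$) and integrates in $z$ afterward; both orderings are valid.
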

\begin{proof}
%\begin{enumerate}
(i)
Let $v\in L^p(\Omega)$ and extend $v$ to $\mathbb{R}^n$ by zero.  Denote
by $B_{h}(z)$ the ball of radius $h$ and center $z$.  Then by a change of variables
and interchanging integrals, we find
\begin{align*}
\int_\Omega \int_{T_z} |v(x)|^p \dx[x]\dx[z]
&\le \int_\Omega \int_{B_h(z)} |v(x)|^p\dx[x]\dx[z] \\
& = h^n \int_\Omega \int_{B_1(0)} |v(z+hy)|^p \dx[y] \dx[z]\\
& = h^n  \int_{B_1(0)} \int_\Omega |v(z+hy)|^p \dx[z] \dx[y]\\
&\lss h^n \int_{B_1(0)} \|v\|_{L^p(\Omega)}^p\dx[y] \lss h^n \|v\|_{L^p(\Omega)}^p. 
\end{align*}
This proves \eqref{Fubini1}.

(ii) To prove \eqref{Fubini2} we again extend $v$  to $\mathbb{R}^n$ by zero
and make a change of variables to obtain
\begin{align*}
\int_\Omega \|\sigma_z^s v\|_{L^p(\Omega)}^p\dx[z]
& = \int_\Omega \int_\Omega  \Big(\frac{h}{|x-z|+h}\Big)^{sp} |v(x)|^p \dx[x]\dx[z]\\
& \le h^n \int_\Omega \int_{\hat{\Omega}} \frac{h^{sp}}{(|h y|+h)^{sp}} |v(z+h y)|^p\dx[y]\dx[z]\\
& = h^n \int_{\hat{\Omega}} \Big(\int_\Omega  |v(z+hy)|^p\dx[z]\Big)\frac{1}{(| y|+1)^{sp}} \dx[y],
\end{align*}
where
$\hat{\Omega} = \{ 2 h^{-1} x:\ x\in \Omega\}$
is a dilation of $\Omega$.
Therefore, %for $sp>n$,
\begin{align}\label{Line1}
\int_\Omega \|\sigma_z^s v\|_{L^p(\Omega)}^p\, \dx[z]
&\lss h^n \|v\|_{L^p(\Omega)}^p \int_{\hat{\Omega}} \frac{1}{(|y|+1)^{sp}}\, \dx[y].
\end{align}
%provided $ps>n$.
For $sp>n$, there holds
\begin{align*}
 \int_{\hat{\Omega}}\frac{1}{(|y|+1)^{sp}}\dx[y]\lss \int_0^\infty \frac{r^{n-1}}{(r+1)^{sp}}\dx[r] = (n-1)! \prod_{j=1}^n (sp-j)^{-1}
 \le \frac{(n-1)!}{sp-n}.
 \end{align*}
 Combining this identity with \eqref{Line1} yields the inequality
 \begin{align}\label{FubiniproofLine1}
\int_\Omega \|\sigma_z^s v\|_{L^p(\Omega)}^p \dx[z]\lss \frac{h^n}{sp-n}  \|v\|_{L^p(\Omega)}^p.
\end{align}

If $sp=n$, then we find by a direct calculation that
\begin{align*}
\int_{\hat{\Omega}} \frac{1}{(|y|+1)^n}\dx[y]
&\lss \int_0^{h^{-1}} \frac{r^{n-1}}{(r+1)^n}\dx[r]
 = -\sum_{j=1}^{n-1} \frac{1}{(h+1)^{n-j}} + \log (1+h^{-1})\lss |\log h|,
\end{align*}
and therefore by \eqref{Line1},
\begin{align}\label{FubiniproofLine2}
\int_\Omega \|\sigma_z^{p/n} v\|_{L^p(\Omega)}^p\dx[z]\lss |\log h| h^n \|v\|_{L^p(\Omega)}.
\end{align}

%Combining these calculations and using the definition of the 
%norm yields the desired inequality \eqref{Fubini2}.
Next, by trace inequalities given in Lemma \ref{lem2:2}, we have
\begin{align*}
\sum_{e\in \mathcal{E}_h} h_e^{1-p} \|[\sigma_z^s v]\|_{L^p(e)}^p
&\lss h^{-p} \|\sigma_z^s v\|_{L^p(\Omega)}^p+ \|\nabla_h (\sigma_z^s v)\|_{L^p(\Omega)}^p\\
&\lss h^{-p} \|\sigma_z^s v\|_{L^p(\Omega)}^p+ \| v \nabla (\sigma_z^s)\|_{L^p(\Omega)}^p+ \| \sigma_z^s \nabla_h v\|_{L^p(\Omega)}^p.
\end{align*}
Noting that
\begin{align*}
|\nabla (\sigma_z^s)|\lss \frac{h^s}{(|x-z|+h)^{s+1}}  = \frac{\sigma_z^s}{|x-z|+h}\lss h^{-1} \sigma_z^s,
\end{align*}
we obtain
\begin{align}\label{FubiniproofLine3}
\sum_{e\in \mathcal{E}_h} h_e^{1-p} \|[\sigma_z^s v]\|_{L^p(e)}^p\lss h^{-p} \|\sigma_z^s v\|_{L^p(\Omega)}^p+ \| \sigma_z^s \nabla_h v\|_{L^p(\Omega)}^p.
\end{align}
Likewise we have
\begin{align}\label{FubiniproofLine4}
\sum_{e\in \mathcal{E}_h} h_e \|\sigma_z^s \{\nabla v\}\|_{L^p(e)}^p\lss \|\sigma_z^s \nabla_h v\|_{L^p(\Omega)}^p+ h^p \|\sigma_z^s D^2_h v\|_{L^p(\Omega)}.
\end{align}
Combining \eqref{FubiniproofLine3}--\eqref{FubiniproofLine4} yields
\begin{align}\label{FubiniproofLine5}
\|v\|_{W^{1,p}_h(\Omega),z,s}\lss \|\sigma_z^s \nabla_h v\|_{L^p(\Omega)}^p+ h^{-p} \|\sigma_z^s  v\|_{L^p(\Omega)}^p
+h^p\|\sigma_z^s D^2_h v\|_{L^p(\Omega)}^p.
\end{align}
Finally applying the identities \eqref{FubiniproofLine1}--\eqref{FubiniproofLine2}
to \eqref{FubiniproofLine5} yields the desired result \eqref{Fubini2}--\eqref{Fubini3}.  The proof is complete.
\end{proof}

%%%%%%%%%%%%%%%%%%%
%%% PointWiseLemma  %%%%%%
%%%%%%%%%%%%%%%%%%%
\begin{lemma}\label{PointWiseLemma}
Let $w\in W^{2,p}(\mct)\ (2\le p \le \infty)$ 
and $w_h\in V_h$ satisfy \eqref{ErrorEquation} with $\e=1$.
Then  for any $0\le s\le k-1+n/p$ and $z\in \overline{\Omega}$,
\begin{align*}
|(w-w_h)(z)|\les h^{1-n/p} |\log h|^{\bar{s}(p)} \inf_{v_h\in V_h} \|w-v_h\|_{W^{1,p}_h(\Omega),z,s},
\end{align*}
where $\bar{s}(p) = 1$ if $k = s+1-n/p$
and $\bar{s}(p) = 0$ for $k>s+1-n/p$.
\end{lemma}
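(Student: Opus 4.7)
The plan is to adapt the Schatz--Chen duality-based pointwise analysis that leads to \eqref{GradPointwiseChen} in \cite{ChenChen04}, but carry it out in $L^p/L^{p'}$ duality instead of the $L^1/L^\infty$ duality used there. Fix $z\in\overline{\W}$ and let $T_z\in\Th$ be the element containing $z$. I would first introduce a regularized Dirac mass $\delta^h_z\in V_h$ supported on $T_z$ and normalized so that $\phi_h(z)=(\delta^h_z,\phi_h)$ for every $\phi_h\in V_h$, with $\|\delta^h_z\|_{L^q(\W)}\lss h^{-n(1-1/q)}$ for $1\le q\le\infty$. Let $g=g_z\in W^{2,p'}(\W)\cap W^{1,p'}_0(\W)$ denote the strong solution of the adjoint problem $\L_0^* g=\delta^h_z$ in $\W$, $g=0$ on $\partial\W$, and let $g_h\in V_h$ be its SIP-DG approximation; since $a_{0,h}^1$ is symmetric, the error $g-g_h$ is Galerkin-orthogonal to $V_h$.

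Using the extended form \eqref{eqn5} and the DG integration by parts \eqref{eqn2:8}, one obtains $(\delta^h_z,w-w_h)=a_{0,h}^1(g,w-w_h)$, and then the error equation \eqref{ErrorEquation} with $\e=1$ yields, for every $v_h\in V_h$,
\begin{align*}
(w-w_h)(z)=\bigl[(w-w_h)(z)-(\delta^h_z,w-w_h)\bigr]+a_{0,h}^1(g-g_h,w-v_h).
\end{align*}
The bracketed term is purely local on $T_z$ (where $\sigma_z\approx 1$) and can be absorbed into the right-hand side using \eqref{eqn2:3} and the super-approximation bounds \eqref{eqn2:37}--\eqref{eqn2:39}. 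Applying H\"older's inequality in the weighted norms \eqref{weightedhnorm} to the remaining term gives
\begin{align*}
|a_{0,h}^1(g-g_h,w-v_h)|\lss \|g-g_h\|_{W^{1,p'}_h(\W),z,-s}\,\|w-v_h\|_{W^{1,p}_h(\W),z,s},
\end{align*}
so that the proof reduces to establishing $\|g-g_h\|_{W^{1,p'}_h(\W),z,-s}\lss h^{1-n/p}|\log h|^{\bar{s}(p)}$.

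The main obstacle is this weighted $W^{1,p'}_h$ Green-function estimate, which is the $p$-variant of the $W^{1,1}$-weighted bound used in \cite{ChenChen04}. I would apply the standard dyadic Schatz decomposition $\W=\bigcup_j\W_j$ with $\W_j=\{x\in\W:\,d_j\le|x-z|\le 2d_j\}$ and $d_j=2^j h$, so that $\sigma_z\approx 2^{-j}$ on $\W_j$ and there are $O(|\log h|)$ dyadic levels. Away from $T_z$ the source $\delta^h_z$ vanishes, hence $g$ behaves like the continuous fundamental solution and satisfies $\|g\|_{W^{k+1,p'}(\W_j^*)}\lss d_j^{1-k-n/p}$ on a slight enlargement $\W_j^*$ of $\W_j$. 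A local weighted DG energy estimate on $\W_j^*$ (a discrete Caccioppoli-type inequality bootstrapped across adjacent shells and combined with local polynomial approximation of $g$) then yields $\|g-g_h\|_{W^{1,p'}_h(\W_j)}\lss h^k d_j^{1-k-n/p}$. Weighting by $\sigma_z^{-s}\approx 2^{js}$ and summing in $j$ produces a geometric series that converges for $s<k-1+n/p$ and degenerates into a harmonic (logarithmic) sum at the borderline $s=k-1+n/p$, which is precisely where the $|\log h|^{\bar{s}(p)}$ factor arises. Combining this weighted Green-function bound with the decomposition above and taking the infimum over $v_h\in V_h$ completes the proof.
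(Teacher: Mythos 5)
Your overall strategy coincides with the paper's: reduce the pointwise error to a duality pairing with a regularized Green's function for $\mathcal{L}_0$, bound that pairing by H\"older's inequality in the weighted norms, and then prove the weighted estimate $\|g-g_h\|_{W^{1,p'}_h(\W),z,-s}\lss h^{1-n/p}|\log h|^{\bar{s}(p)}$ by a Schatz-type dyadic decomposition. The cosmetic differences (a discrete delta $\delta^h_z$ versus the paper's normalized $L^2(T_z)$ dual datum $\rho$ introduced after an inverse estimate; shells growing outward from radius $h$ versus shrinking from ${\rm diam}(\W)$) are immaterial.

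The genuine gap is in the local estimate on each annulus. You assert $\|g-g_h\|_{W^{1,p'}_h(\W_j)}\lss h^k d_j^{1-k-n/p}$ from a ``discrete Caccioppoli inequality bootstrapped across adjacent shells.'' No such clean bound holds: every local DG (or FEM) error estimate of this type carries an additional pollution term measuring the error in a weaker norm on an enlarged shell; in the paper's notation, $\|e_z\|_{W^{1,2}_h(\Omega_j)}\lss h^k d_j^{1-k-n/2}+d_j^{-1}\|e_z\|_{L^2(\Omega_j^\prime)}$ (Lemma 4.4 of \cite{ChenChen04}). Disposing of the terms $d_j^{-1}\|e_z\|_{L^2(\Omega_j^\prime)}$ is the actual substance of the proof: one needs a second, local duality argument ((5.24) of \cite{ChenChen04}) giving $\|e_z\|_{L^2(\Omega_j^\prime)}\lss h^k d_j^{1-k-n/2}\|e_z\|_{W^{1,1}_h(\W)}+h\|e_z\|_{W^{1,2}_h(\Omega_j^{\prime\prime})}$, a kickback in the dyadic sum made possible by taking the inner cutoff radius $Mh$ with $M$ large, and a bootstrap that first closes the argument in the special case $s=0$, $p=\infty$, $p'=1$ to obtain $\|e_z\|_{W^{1,1}_h(\W)}\lss|\log h|\,h$ before the general $(s,p)$ case can be concluded. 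Without this machinery the dyadic summation does not close. Two smaller omissions: the innermost region $|x-z|\lss Mh$ must be treated separately, since the fundamental-solution bounds $\|g\|_{W^{k+1,p'}(\W_j^*)}\lss d_j^{1-k-n/p}$ fail near the support of the datum (the paper uses the global bound $\|e_z\|_{W^{1,2}_h(\W)}\lss h^{1-n/2}$ there); and the super-approximation estimates \eqref{eqn2:37}--\eqref{eqn2:39} you invoke for the bracketed local term concern $\eta v_h-\tI(\eta v_h)$ with $v_h\in V_h$ and are not the right tool for bounding $|(w-v_h)(z)-(\delta^h_z,w-v_h)|$; what is needed is a scaled Sobolev embedding on $T_z$, as in Step 1 of the paper's proof.
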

\begin{proof}
{\em Step 1:  Set-up.} By the triangle inequality, an inverse estimate, and H\"{o}lder's inequality we obtain
\begin{align*}
|(w-w_h)(z)|
&\le |(w-v_h)(z)|+\|v_h-w_h\|_{L^\infty(T_z)}\\
&\le |(w-v_h)(z)|+h^{-n/2}\|v_h-w_h\|_{L^2(T_z)}\\
&\le |(w-v_h)(z)|+h^{-n/2}\big(\|w-w_h\|_{L^2(T_z)}+\|w-v_h\|_{L^2(T_z)}\big)\\
&\le \|w-v_h\|_{L^{\infty}(T_z)}+h^{-n/2}\|w-w_h\|_{L^2(T_z)}.
\end{align*}
Therefore by standard approximation theory,
and since $\sigma_z\approx 1$ on $T_z$, we have
\begin{align*}
|(w-w_h)(z)|
&\le h^{1-n/p} \|w\|_{W_h^{1,p}(T_z)}+h^{-n/2}\|w-w_h\|_{L^2(T_z)}\\
&\le h^{1-n/p} \|w\|_{W^{1,p}_h(\Omega),z,s}+h^{-n/2}\|w-w_h\|_{L^2(T_z)}.
\end{align*}
Replacing $w$ and $w_h$ by $w-v_h$ and $w_h-v_h$, respectively, 
yields
\begin{align}\label{SetUp1}
|(w-w_h)(z)|
&\lss h^{1-n/p} \|w-v_h\|_{W^{1,p}_h(\Omega),z,s}+h^{-n/2}\|w-w_h\|_{L^2(T_z)}.
\end{align}

Next, define $\rho\in L^2(\Omega)$ by
\begin{align*}
\rho(x) & = \left\{
\begin{array}{ll}
\frac{h^{-n/2} (w-w_h)(x)}{\|w-w_h\|_{L^2(T_z)}} & \text{if } x\in T_z\\
0 & \text{otherwise},
\end{array}
\right.
\end{align*}
and let $g_z\in H^1_0(\Omega)$ be the regularized
Green's function satisfying 
\begin{align}\label{gzLEquation}
\mathcal{L}_0 g_z = \rho.
\end{align}
Setting $g_{z,h}$ to be the DG
approximation  of $g_z$, i.e.,
$a_{0,h}(v_h,g_z-g_{z,h})=0,\ \forall v_h\in V_h$,
and $e_z:=g_z-g_{z,h}$, we have
by Galerkin orthogonality and the continuity
of the bilinear form,
\begin{align*}
h^{-n/2} \|w-w_h\|_{L^2(T_z)} 
&= (\rho,w-w_h)  = a_{0,h}(w-w_h,g_z)\\
& = a_{0,h}(w-v_h,e_z)\lss \|w-v_h\|_{W^{1,p}_h(\Omega),s,z}\|e_z\|_{W^{1,p'}_h(\Omega),-s,z}.
\end{align*}
Consequently, by \eqref{SetUp1}, we have
\begin{align}\label{SetUp2}
|(w-w_h)(z)|\lss  \|w-v_h\|_{W^{1,p}_h(\Omega),z,s}\big(h^{1-n/p}+\|e_z\|_{W^{1,p'}_h(\Omega),-s,z}\big)\quad \forall v_h\in V_h.
\end{align}
Thus, the proof will be completed once it is shown that $\|e_z\|_{W^{1,p'}_h(\Omega),-s,z}\lss |\log h|^{\bar{s}(p)} h^{1-n/p}$.
This result is derived in the following steps.

{\em Step 2: Dyadic decomposition of $\Omega$.} 
To estimate $\|e_z\|_{W^{1,p'}_h(\Omega),-s,z}$ we require some more notation.
Without loss of generality, assume that ${\rm diam}(\Omega)=1$. Let $d_j= 2^{-j}$ and set
\begin{align*}
\Omega_j &= \{x\in \Omega:\ d_{j+1}<|z-x|<d_j\},\\
\Omega^\prime_j &= \{x\in \Omega:\ d_{j+2}<|z-x|<d_{j-1}\},\\
\Omega^{\prime\prime}_j &= \{x\in \Omega:\ d_{j+3}<|z-x|<d_{j-2}\}.
\end{align*}

Let $M>1$ be a real number to be determined later, and let
$J\approx |\log h|$ be an integer such that $Mh = 2^{-J}$.
We then write
\begin{align}\label{Dyadic1}
\|e_z\|_{W^{1,p'}_h(\Omega),z,-s} \lss \|e_z\|_{W^{1,p'}_h(B_{Mh}(z)),z,{-s}}
+ \sum_{j=0}^J \|e_z\|_{W^{1,p'}_h(\Omega_j),z,-s}.
\end{align}
Note that, by the definition of $\Omega_j$,
 the weighted norms, and H\"older's inequality that
 \begin{align*}
 \|e_z\|_{W^{1,p'}_h(\Omega_j),z,-s}&\lss d_j^{n/q+s} h^{-s} \|e_z\|_{W^{1,2}_h(\Omega_j)},\\
 \|e_z\|_{W^{1,p'}_h(B_{Mh}),z,-s}&\lss h^{n/q} \|e_z\|_{W^{1,2}_h(B_{Mh}(z))}\le h^{n/q} \|e_z\|_{W^{1,2}_h(\Omega)},
 \end{align*}
 where 
 \begin{align*}
q\in [2,\infty]\ \text{satisfies}\ 1/q+1/p = 1/2.
 \end{align*}
 Applying these estimates to \eqref{Dyadic1} 
yields
\begin{align}\label{Dyadic2}
\|e_z\|_{W^{1,p'}_h(\Omega),z,-s} 
&\lss h^{n/q} \|e_z\|_{W^{1,2}_h(\Omega)}
+ \sum_{j=0}^J d_j^{n/q+s} h^{-s} \|e_z\|_{W^{1,2}_h(\Omega_j)}\\
&\nonumber= h^{n/q} \|e_z\|_{W^{1,2}_h(\Omega)}+Q_h,
%+ \sum_{j=0}^J d_j^{n/q+s} h^{-s} \|e_z\|_{W^{1,2}_h(\Omega_j)}
\end{align}
where
\begin{align}\label{QhDef}
Q_h:=h^{-s}\sum_{j=0}^J d_j^{n/q+s}  \|e_z\|_{W^{1,2}_h(\Omega_j)}.
\end{align}

To estimate the first term in the right--hand side of \eqref{Dyadic2},
we apply elliptic regularity and the identity $\|\rho\|_{L^2(\Omega)}= h^{-n/2}$
to obtain
\begin{align*}
\|e_z\|_{W^{1,2}_h(\Omega)}\lss h \|g_z\|_{W^{2,2}(\Omega)}\lss h \|\rho\|_{L^2(\Omega)} = h^{1-n/2}.
\end{align*}
Applying this estimate in \eqref{Dyadic2}
and using the identity $1-n/2+n/q = 1-n/p$
yields
\begin{align}\label{Dyadic3}
\|e_z\|_{W^{1,p'}_h(\Omega),z,-s}
\lss h^{1-n/p}+ Q_h.
\end{align}
It remains to find an appropriate upper bound of $Q_h$ to complete the proof.

{\em Step 3: Estimate of $Q_h$ --Local error estimates.} Lemma 4.4 in \cite{ChenChen04} states that 
\begin{align*}
\|e_z\|_{W^{1,2}_h(\Omega_j)}
%&\lss h^k \|g_z\|_{W^{k+1,2}(\Omega_j^\prime)} + d_j^{-1} \|e_z\|_{L^2(\Omega_j^\prime)}\\
%
&\lss h^k d_j^{1-k-n/2}+ d_j^{-1} \|e_z\|_{L^2(\Omega_j^\prime)}.
\end{align*}
Applying this estimate to the definition of $Q_h$ \eqref{QhDef} yields
\begin{align*}
Q_h
&\lss h^{k-s} \sum_{j=0}^J d_j^{n/q+s+1-k-n/2} +h^{-s}\sum_{j=0}^J d_j^{n/q+s-1} \|e_z\|_{L^2(\Omega_j^\prime)}\\
& = h^{k-s} \sum_{j=0}^J d_j^{-(k-s+n/p-1)} +h^{-s}\sum_{j=0}^J d_j^{n/q+s-1} \|e_z\|_{L^2(\Omega_j^\prime)}\\
& = h^{1-n/p} \Theta(k-s+n/p-1)
+ h^{-s} \sum_{j=0}^J d_j^{n/q+s-1} \|e_z\|_{L^2(\Omega_j^\prime)},
\end{align*}
where
\begin{align*}
\Theta(\gamma):=\sum_{j=0}^J \big(\frac{h}{d_j})^\gamma.
\end{align*}

Therefore, since (cf. \cite[(5.19)]{ChenChen04})
\begin{align*}
\Theta(\gamma)\lss 
\left\{
\begin{array}{ll}
|\log h| & \text{if }\gamma=0,\\
\frac{1}{M^\gamma (1-2^{-\gamma})} & \text{if }\gamma>0,
\end{array}
\right.
\end{align*}
we find that
\begin{align}\label{QhInequality1}
Q_h \lss |\log h|^{\bar{s}(p)} h^{1-n/p} + h^{-s} \sum_{j=0}^J d_j^{n/q+s-1}\|e_z\|_{L^2(\Omega_j^\prime)}.
\end{align}

%%%%%%%%%%%%%%%%
{\em Step 4: Estimate of $Q_h$ -- Duality Arguments.} 
Applying \cite[(5.24)]{ChenChen04} yields
%Lemma \ref{AuxiliaryLemmaA} we have
%A straightforward modification of \cite[(5.24)]{ChenChen04} shows that\footnote{Details
%in next section}
\begin{align}\label{DualityEz}
\|e_z\|_{L^2(\Omega^\prime_j)}
&\lss h^k d_j^{1-k-n/2} \|e_z\|_{W^{1,1}_h(\Omega)} + h\|e_z\|_{W^{1,2}_h(\Omega_j^{\prime\prime})}.
\end{align}
Using  estimates \eqref{DualityEz} and \eqref{QhInequality1},
and noting that $\max_{0\le j\le J} d_j^{-1} = 2^J = 1/(hM)$, we find
\begin{align*}
Q_h 
&\lss |\log h|^{\bar{s}(p)} h^{1-n/p}
+ h^{k-s} \sum_{j=0}^J d_j^{s-k-n/p} \|e_z\|_{W^{1,1}_h(\Omega)}
+ h^{1-s} \sum_{j=0}^J d_j^{n/q+s-1} \|e_z\|_{W^{1,2}_h(\Omega_j^{\prime\prime})}\\
&\lss |\log h|^{\bar{s}(p)} h^{1-n/p}
+ h^{k-s} \sum_{j=0}^J d_j^{s-k-n/p} \|e_z\|_{W^{1,1}_h(\Omega)}
+ \frac{h^{-s}}{M} \sum_{j=0}^J d_j^{n/q+s} \|e_z\|_{W^{1,2}_h(\Omega_j^{\prime\prime})}\\
&\lss |\log h|^{\bar{s}(p)} h^{1-n/p}
+ h^{-n/p}\Theta(k-s+n/p)\|e_z\|_{W^{1,1}_h(\Omega)}+ \frac{1}{M} Q_h.
\end{align*}
Taking $M$ sufficiently large yields
\begin{align*}
Q_h \lss  |\log h|^{\bar{s}(p)} h^{1-n/p}
+ h^{-n/p} \Theta(k-s+n/p)\|e_z\|_{W^{1,1}_h(\Omega)}.
\end{align*}

Applying this estimate to \eqref{Dyadic3} then yields
\begin{align}\label{Dyadic5}
\|e_z\|_{W^{1,p}_h(\Omega),z,-s}\lss |\log h|^{\bar{s}(p)} h^{1-n/p}
+ h^{-n/p}\Theta(k-s+n/p)\|e_z\|_{W^{1,1}_h(\Omega)}.
\end{align}
In particular, the case $s=0$, $p=\infty$, $p'=1$ gives
\begin{align*}
\|e_z\|_{W^{1,1}_h(\Omega)}\lss |\log h|^{\bar{s}(\infty)} h
+ \Theta(k)\|e_z\|_{W^{1,1}_h(\Omega)}.
\end{align*}
Since 
\begin{align*}
\Theta(k)\lss \frac{1}{M^k(1-2^{-k})},
\end{align*}
we can take $M$ sufficiently large to conclude that
\begin{align*}
\|e_z\|_{W^{1,1}_h(\Omega)}\lss |\log h|^{\bar{s}(\infty)} h.
\end{align*}
Finally, applying this last estimate to \eqref{Dyadic5} yields
\begin{align*}
\|e_z\|_{W^{1,p}_h(\Omega),z,-s}\lss |\log h|^{\bar{s}(p)} h^{1-n/p}\big(1
+ \Theta(k-s+n/p)\big)\lss  |\log h|^{\bar{s}(p)} h^{1-n/p}.
\end{align*}
Applying this last estimate to \eqref{SetUp2} completes the proof.
\end{proof}

%%%%%%%%%%%%%%%%%%%%%%
%%% ezhatboundlemma %%%%%%%%%
%%%%%%%%%%%%%%%%%%%%%%
\begin{lemma}\label{ezhatboundlemma}
Let $z$ and $T_z$ be as in Lemma \ref{FubiniLemma}.
For arbitrary $\varphi\in C^{\infty}_0(T_z)$, with $\|\varphi\|_{W^{1,2}(T_z)}=1$, 
we extend $\varphi$ to $\Ome$ by zero, and
let $\hat{g}_z$ be the solution to
\begin{subequations}
\label{ghat}
\begin{alignat*}{2}
\mathcal{L}^*_0\hat{g}_z=h^{-n/2-1} \partial \varphi/\partial x_i\quad  &&\text{in }\Ome,\qquad
\hat{g}_z  = 0\quad \text{on }\partial\Ome.
\end{alignat*}
\end{subequations}
Let $\hat{g}_{z,h}\in V_h$ satisfy the discrete adjoint problem
\begin{align*}
a_{0,h}(v_h,\hat{g}_{z,h}) =h^{-n/2-1}\int_\Omega (\partial \varphi/\partial x_i) v_h\, \dx[x]\qquad \forall v_h\in V_h,
\end{align*}
where we have dropped the superscript of the bilinear form for notational simplicity. 
 Let $p\in [2,\infty],\ p^\prime \in [1,2]$ such that $1/p+1/p^\prime = 1$.  Then 
 for any $0\le s\le k+n/p$ there holds
 \begin{align*}
 \|\hat{g}_z-\hat{g}_{z,h}\|_{W^{1,p^\prime}_h(\Ome),z,-s}&\les |\log h|^{\bar{\bar{s}}(p)} h^{-n/p},
 \end{align*}
 where $\bar{\bar{s}}(p) = 1$ if $s = k+n/p$ and $\bar{\bar{s}}(p)=0$ otherwise.
\end{lemma}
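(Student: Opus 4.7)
The plan is to mimic the dyadic decomposition and duality argument used inside the proof of Lemma \ref{PointWiseLemma} to bound $\|e_z\|_{W^{1,p'}_h(\Omega),z,-s}$, adapted to the adjoint regularized Green's function $\hat g_z$. Since $A_0$ is constant and symmetric and we are in the SIP case $\e=1$, the SIP-DG bilinear form is symmetric and $\hat g_{z,h}$ is a standard SIP-DG approximation. The only essential new ingredient is that the source $h^{-n/2-1}\partial\varphi/\partial x_i$ is one derivative more singular than the source $\rho$ used in the proof of Lemma \ref{PointWiseLemma}; this shifts every relevant scaling by one factor of $h$ (or of $|x-z|$), which accounts both for the rate $h^{-n/p}$ (one power worse than the $h^{1-n/p}$ obtained there) and for the shift of the critical exponent from $s=k-1+n/p$ to $s=k+n/p$.

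Set $\hat e_z:=\hat g_z-\hat g_{z,h}$. A direct energy argument together with the Poincar\'e inequality on $T_z$ (using that $\varphi\in C_0^\infty(T_z)$ with $\|\varphi\|_{W^{1,2}(T_z)}=1$) gives $\|h^{-n/2-1}\partial\varphi/\partial x_i\|_{L^2(\Omega)}\lesssim h^{-n/2-1}$, hence $\|\hat g_z\|_{W^{2,2}(\Omega)}\lesssim h^{-n/2-1}$ by Calder\'on-Zygmund regularity and $\|\hat e_z\|_{W^{1,2}_h(\Omega)}\lesssim h^{-n/2}$ by \eqref{W12ErrorEstimate}. With $d_j=2^{-j}$, $Mh=2^{-J}$ and annuli $\Omega_j,\Omega_j',\Omega_j''$ as in the previous proof, H\"older's inequality with $1/q+1/p=1/2$ reduces the weighted norm to
\begin{align*}
\|\hat e_z\|_{W^{1,p'}_h(\Omega),z,-s}\lesssim h^{n/q}\|\hat e_z\|_{W^{1,2}_h(\Omega)}+h^{-s}\sum_{j=0}^{J}d_j^{n/q+s}\|\hat e_z\|_{W^{1,2}_h(\Omega_j)},
\end{align*}
and the first term is already bounded by $h^{-n/p}$ because $n/q-n/2=-n/p$.

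For the annular sum I would apply the local DG error estimate of \cite[Lemma 4.4]{ChenChen04} together with interior Green's function-type bounds $|D^l\hat g_z(x)|\lesssim |x-z|^{1-n-l}$ on $\Omega_j'$ (one order more singular than the bounds used for $g_z$, because the source carries an extra derivative) to obtain $\|\hat e_z\|_{W^{1,2}_h(\Omega_j)}\lesssim h^k d_j^{-k-n/2}+d_j^{-1}\|\hat e_z\|_{L^2(\Omega_j')}$. Substituting this and the dual $L^2$-estimate \cite[(5.24)]{ChenChen04} into the sum, pulling out $d_j^{-1}\le(Mh)^{-1}$ on the remaining $W^{1,2}_h(\Omega_j'')$ contributions, and using the identities $n/q=n/2-n/p$ and $h^{k-s}d_j^{-(k-s+n/p)}=h^{-n/p}(h/d_j)^{k-s+n/p}$ yields a self-referential inequality
\begin{align*}
Q_h \lesssim |\log h|^{\bar{\bar s}(p)}h^{-n/p}+h^{-n/p}\Theta(k-s+n/p)\|\hat e_z\|_{W^{1,1}_h(\Omega)}+\tfrac{1}{M}Q_h,
\end{align*}
where $Q_h$ denotes the annular sum and $\Theta(\gamma)=\sum_j(h/d_j)^\gamma$ is the geometric sum from the proof of Lemma \ref{PointWiseLemma}; choosing $M$ sufficiently large absorbs the last term.

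The final step is a bootstrap identical in spirit to the one used in the previous proof: applying the just-derived inequality with the extreme choice $p=\infty$, $p'=1$, $s=0$ produces $\|\hat e_z\|_{W^{1,1}_h(\Omega)}\lesssim |\log h|^{\bar{\bar s}(\infty)}+\Theta(k)\|\hat e_z\|_{W^{1,1}_h(\Omega)}$, and since $\Theta(k)\lesssim M^{-k}$ with $k\ge 1$ we can enlarge $M$ once more to absorb the residual $W^{1,1}_h$ term, obtaining $\|\hat e_z\|_{W^{1,1}_h(\Omega)}\lesssim 1$. Feeding this bound back into the self-referential estimate and noting that $\Theta(k-s+n/p)\lesssim |\log h|^{\bar{\bar s}(p)}$ (it is of order $|\log h|$ exactly when $k-s+n/p=0$, i.e., $s=k+n/p$) completes the proof. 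The main obstacle I expect is the careful bookkeeping of exponents throughout the dyadic sums and the bootstrap, in particular verifying that the extra derivative on $\varphi$ cleanly shifts every relevant exponent by one and produces the stated critical threshold $s=k+n/p$.
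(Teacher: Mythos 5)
Your proposal follows essentially the same route as the paper's proof: the same dyadic decomposition into $B_{Mh}(z)$ and annuli $\Omega_j$, the global bound $\|\hat e_z\|_{W^{1,2}_h(\Omega)}\lesssim h\|\hat g_z\|_{W^{2,2}(\Omega)}\lesssim h^{-n/2}$ for the near field, the local DG error estimates and the one-order-more-singular Green's function bounds $\|\hat g_z\|_{W^{k+1,2}(\Omega_j')}\lesssim d_j^{-n/2-k}$ for the far field, the self-referential inequality absorbed by taking $M$ large, and the $W^{1,1}_h$ bootstrap at $s=0$, $p=\infty$. The argument and the exponent bookkeeping (including the critical threshold $s=k+n/p$) match the paper's, so the proposal is correct.
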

\begin{proof}
Set $\hat{e}_z  = \hat{g}_z-\hat{g}_{z,h}$, and for $M>0$, let $J$ satisfy
$Mh = 2^{-J}$.
Then by applying 
similar arguments as the proof of Lemma \ref{PointWiseLemma}, we obtain
\begin{align*}
\|\hat{e}_z\|_{W^{1,p'}_h(\Omega),z,-s}
&\le \|\hat{e}_z\|_{W^{1,p'}_h(B_{Mh}(z)),z,-s} + \sum_{j=0}^J \|\hat{e}_z\|_{W^{1,p'}_h(\Omega_j)}\\
&\lss h^{n/q} \|\hat{e}_z\|_{W^{1,2}_h(\Omega)} + h^{-s} \sum_{j=0}^J d_j^{n/q+s} \|\hat{e}_z\|_{W^{1,2}_h(\Omega_j)}\\
&\lss h^{n/q+1} \|\hat{g}_z\|_{W^{2,2}_h(\Omega)} + h^{-s} \sum_{j=0}^J d_j^{n/q+s} \|\hat{e}_z\|_{W^{1,2}_h(\Omega_j)}\\
&\lss h^{-n/p} + \hat{F}_h,
\end{align*}
with 
\begin{align}\label{FhatDef}
\hat{F}_h:=h^{-s} \sum_{j=0}^J d_j^{n/q+s} \|\hat{e}_z\|_{W^{1,2}_h(\Omega_j)}.
\end{align}
By the local error estimate given in \cite[Lemma 4.2]{ChenChen04} we have
\begin{align*}
\|\hat{e}_z\|_{W^{1,2}_h(\Omega_j)}\lss h^k \|\hat{g}_z\|_{W^{k+1,2}(\Omega_j^\prime)} + d_j^{-1}\|\hat{e}_z\|_{L^2(\Omega_j^\prime)},
\end{align*}
and Green function estimates show that $\|\hat{g}_z\|_{W^{k+1,2}(\Omega_j^\prime)}\lss d_j^{-n/2-k}$.
Applying these estimates into \eqref{FhatDef} yield
\begin{align*}
\hat{F}_h
&\lss h^{k-s} \sum_{j=0}^J d_j^{n(1/q-1/2)+s-k} + h^{-s} \sum_{j=0}^J d_j^{n/q+s-1} \|\hat{e}_z\|_{L^2(\Omega_j^\prime)}\\
& = h^{-n/p} \Theta(k-s+n/p) + h^{-s} \sum_{j=0}^J d_j^{n/q+s-1} \|\hat{e}_z\|_{L^2(\Omega_j^\prime)}\\
&\lss |\log h|^{\bar{\bar{s}}(p)} h^{-n/p}  + h^{-s} \sum_{j=0}^J d_j^{n/q+s-1} \|\hat{e}_z\|_{L^2(\Omega_j^\prime)}.
\end{align*}

Applying \cite[(5.39)]{ChenChen04}, we have
\begin{align*}
\|\hat{e}_z\|_{L^2(\Omega_j^\prime)}\lss h^k d_j^{1-k-n/2} \|\hat{e}_z\|_{W^{1,1}_h(\Omega)}+h \|\hat{e}_z\|_{W^{1,2}_h(\Omega_j^{\prime\prime})},
\end{align*}
and therefore
\begin{align*}
\hat{F}_h
&\lss |\log h|^{\bar{\bar{s}}(p)}h^{-n/p} + h^{-n/p}\Theta(k-s+n/p) \|\hat{e}_z\|_{W^{1,1}_h(\Omega)}+ h^{1-s} \sum_{j=0}^J d_j^{n/q+s-1} 
 \|\hat{e}_z\|_{W^{1,2}_h(\Omega_j^{\prime\prime})}\\
 &\lss |\log h|^{\bar{\bar{s}}(p)} h^{-n/p} + h^{-n/p}\Theta(k-s+n/p) \|\hat{e}_z\|_{W^{1,1}_h(\Omega)}+ \frac{\hat{F}_h}{M}.
  \end{align*}
  By taking $M$ sufficiently large, we obtain
\begin{align*}
\hat{F}_h\lss |\log h|^{\bar{\bar{s}}(p)} h^{-n/p} + h^{-n/p}\Theta(k-s+n/p) \|\hat{e}_z\|_{W^{1,1}_h(\Omega)},
\end{align*}
and therefore
\begin{align*}
\|\hat{e}_z\|_{W^{1,p'}_h(\Omega),z,-s}\lss |\log h|^{\bar{\bar{s}}(p)} h^{-n/p} + h^{-n/p}\Theta(k-s+n/p) \|\hat{e}_z\|_{W^{1,1}_h(\Omega)}.
\end{align*}
The case $s=0$, $p'=1$, $p=\infty$ yields
\begin{align*}
\|\hat{e}_z\|_{W^{1,1}_h(\Omega)}\lss  1 + \Theta(k) \|\hat{e}_z\|_{W^{1,1}_h(\Omega)},
\end{align*}
and therefore, we conclude by taking $M>0$ sufficiently large that
\begin{align*}
\|\hat{e}_z\|_{W^{1,1}_h(\Omega)}\lss  1
\end{align*}
We then conclude that
\begin{align*}
\|\hat{e}_z\|_{W^{1,p'}_h(\Omega),z,-s}\lss |\log h|^{\bar{\bar{s}}(p)} h^{-n/p}.
\end{align*}
The proof is complete.
\end{proof}

% start of proof
\subsubsection{Proof of Theorem \ref{W1pTheorem} for ${\bm  p}$ $\mathbf{\geq 2}$}
We now prove Theorem \ref{W1pTheorem} in 
the case $p\in [2,\infty)$.
To this end, let $z\in \Omega$ and $T_z\in \mct$ such that
$z\in T_z$.  Using an inverse estimate, \eqref{eqn2:75}, and the
  triangle inequality
we obtain 
\begin{align}\label{prelimLp}
|\partial w_h(z)/\partial x_i|
&\les h^{-n/2}\|\partial w_h/\partial x_i\|_{L^2(T_z)} \\
&\nonum \les h^{-n/2-1}\|\partial  w_h/\partial x_i\|_{W^{-1,2}(T_z)}\\
&\nonum\les h^{-n/2-1}\Bigl(\|\partial (w-w_h)/\partial x_i\|_{W^{-1,2}(T_z)} +\|\partial w/\partial x_i\|_{W^{-1,2}(T_z)}\Bigr).
\end{align}
Note that, by the {Poincar\'e-Friedrichs} and H\"older inequalities, 
\begin{align*}
\|\partial w/\partial x_i\|_{W^{-1,2}(T_z)} 
&= \mathop{\sup_{\varphi\in C^\infty_0(T_z)}}_{\|\varphi\|_{W^{1,2}(T_z)}=1} (\partial w/\partial x_i,\varphi)_{T_z}\\
&\les \mathop{\sup_{\varphi\in C^\infty_0(T_z)}}_{\|\varphi\|_{W^{1,2}(T_z)}=1} |T_z|^{\frac{p-2}{2p}} \|\partial w/\partial x_i\|_{L^p(T_z)} \|\varphi\|_{L^2(T_z)}\\
&\les |T_z|^{\frac{p-2}{2p}} {\rm diam}(T_z) \|\partial w/\partial x_i\|_{L^p(T_z)} \les h^{1+n/2-n/p}\|\partial w/\partial x_i\|_{L^p(T_z)}.
\end{align*}
Inserting this estimate into \eqref{prelimLp} yields
\begin{align*}
|\partial w_h(z)/\partial x_i|
&\nonum\les h^{-n/p} \|\partial w/\partial x_i\|_{L^p(T_z)}+h^{-n/2-1}\|\partial (w-w_h)/\partial x_i\|_{W^{-1,2}(T_z)}.
\end{align*}
Replacing $w$ by $w-v_h$ and $w_h$ by $w_h-v_h$ for some $v_h\in \vh$ in the argument above, we conclude
\begin{align}\label{prelimLp2}
|\partial (w_h-v_h)(z)/\partial x_i| 
&\les h^{-n/p} \|\partial (w-v_h)/\partial x_i\|_{L^p(T_z)}\\
&\qquad\nonumber +h^{-n/2-1}\|\partial (w-w_h)/\partial x_i\|_{W^{-1,2}(T_z)}.
\end{align}

Let $\varphi$, $\hat{g}_z$ and $\hat{g}_{z,h}$ be as in Lemma \ref{ezhatboundlemma}.
Setting $\hat{e}_z = \hat{g}_z-\hat{g}_{z,h}$, we have for arbitrary $v_h\in \vh$
\begin{align*}
h^{-n/2-1}\int_{T_z}(w-w_h)\partial \varphi/\partial x_i\, dx
& = a_{0,h}(w-v_h,\hat{e}_z)\\
&\les  \|w-v_h\|_{W_h^{1,p}(\Ome),z,s} \|\hat{e}_z\|_{W^{1,p^\prime}_h(\Ome),z,-s}\\
&\les \|w-v_h\|_{W_h^{1,p}(\Ome),z,s} |\log h|^{\bar{\bar{s}}(p)} h^{-n/p},
\end{align*}
where $\bar{\bar{s}}(p)$ is defined in Lemma \ref{ezhatboundlemma}.

Applying this last estimate into \eqref{prelimLp2} yields
 \begin{align}\label{powerP}
|\nab (w_h-v_h)(z)| 
&\les h^{-n/p} \|\nab (w-v_h)\|_{L^p(T_z)}\\
&\nonum\qquad\qquad +h^{-n/p} |\log h|^{\bar{\bar{s}}(p)}\|w-v_h\|_{W^{1,p}_h(\Ome),z,s}.
\end{align}
Raising \eqref{powerP} by the power $p$ and integrating over $\Ome$ with respect to $z$, we conclude
\begin{align*}
\|\nab_h (w_h-v_h)\|_{L^p(\Ome)}
&\les \left(h^{-n}\int_\Ome \|\nab_h (w-v_h)\|_{L^p(T_z)}^p\, \dx[z]\right)^{1/p}\\
&\qquad\nonum +\left(h^{-n} |\log h|^{\bar{\bar{s}}(p)p}\int_\Ome \|w-v_h\|^p_{W^{1,p}_h(\Ome),z,s}\, \dx[z]\right)^{1/p}.
\end{align*}
Next, we choose $s$ such that $n/p<s<k+n/p$.  Then ${\bar{\bar{s}}(p)}=0$, and 
by \eqref{Fubini1}--\eqref{Fubini2}
\begin{align*}
\|\nab_h (w_h-v_h)\|^p_{L^p(\Ome)} &\les \|\nab_h(w-v_h)\|^p_{L^{p}(\Ome)}+ h^{-p} \|w-v_h\|_{L^p(\Omega)}^p + h^p \|D^2_h(w-v_h)\|_{L^p(\Omega)}^p,
\end{align*}
and therefore by the triangle inequality, and by taking $v_h = I_h w$, the nodal interpolant of $w$,
\begin{align}\label{W1pEstimatePart1}
\|\nab_h (w-w_h)\|_{L^p(\Ome)}\les h \|w\|_{W^{2,p}(\Omega)}.
\end{align}
Next we {bound} the jumps $\|[w-w_h]\|_{L^p(e)}$.
First, by the trace inequalities stated in Lemma \ref{lem2:2} we have
\begin{align}\label{asdf0}
\sum_{e\in \mce} h_e^{1-p} \|[w-w_h]\|_{L^p(e)}^p
\lss C \Big(\|\nab_h (w-w_h)\|_{L^p(\Omega)}^p+h^{-p}\|w-w_h\|_{L^p(\Omega)}^p\Big).
\end{align}

By Lemma \ref{PointWiseLemma} we have for any $z\in \overline{\Omega}$ and $v_h\in V_h$,
\begin{align*}
|(w-w_h)(z)|^p\les h^{p-n} |\log h|^{p \bar{s}(p)} \|w-v_h\|^p_{W^{1,p}_h(\Omega),z,s},
\end{align*}
where $\bar{s}(p) = 1$ if $k = s+1-n/p$
and $\bar{s}(p) = 0$ for $k>s+1-n/p$.
Integrating this expression with respect to $z$ yields
\begin{align}\label{asdf}
\|w-w_h\|_{L^p(\Omega)}^p\les h^{p-n} |\log h|^{p \bar{s}(p)} \int_\Omega \|w-v_h\|^p_{W^{1,p}_h(\Omega),z,s}\dx[z].
\end{align}
If $k=1$, then we set $s = n/p$, so that $\bar{s}(p) = 1$, and by \eqref{Fubini3} with $v_h = I_h w$,
\begin{align}\label{asdf1}
\|w-w_h\|_{L^p(\Omega)}^p
&\les h^{p-n} |\log h|^p \int_\Omega \|w-v_h\|^p_{W^{1,p}_h(\Omega),z,n/p}\dx[z]\\
&\nonumber \les h^{2p} |\log h|^{p+1}  \|w\|_{W^{2,p}(\Omega)}^p.
%\|w-v_h\|_{W^{1,p}_h(\Omega)}.
\end{align}
On the other hand, if $k\geq 2$, then we choose $s$ such 
that $n/p<s<k-1+n/p$.  Then $\bar{s}(p)=0$, and by \eqref{asdf} and \eqref{Fubini2},
\begin{align}\label{asdf2}
\|w-w_h\|_{L^p(\Omega)}^p
& \les h^{2p} \|w\|^p_{W^{2,p}(\Omega)}.
\end{align}
Combining \eqref{asdf0} with \eqref{W1pEstimatePart1}, \eqref{asdf1}
and \eqref{asdf2} then yields,
\begin{align}\label{asdf3}
\sum_{e\in \mce} h_e^{1-p} \|[w-w_h]\|_{L^p(e)}^p\les |\log h|^{p+1}  h^p \|w\|^p_{W^{2,p}(\Omega)}.  
\end{align}

Finally combining \eqref{W1pEstimatePart1}, \eqref{asdf3} and applying standard scaling arguments 
yields \eqref{W1pTheoremMainLine}.  This completes the proof of Theorem \ref{W1pTheorem} in the case $p\geq 2$.

%%%%%%%%%%%%%%%%%%%%%%%%%%%%%%%%%%%%%%%%%
\subsubsection{Proof of Theorem \ref{W1pTheorem} for ${\bf 1<}{\bm p}{\bf <2}$}
The proof of $W^{1,p}$ error estimates
in the range $p\in (1,2)$ is based on the following result.
\begin{lemma}\label{infsupAuxLem}
There holds, for $p^\prime\in [2,\infty)$,
\begin{align*}
\|v_h\|_{W^{1,p^\prime}_h(\Omega)}\les |\log h|^{t'} \sup_{0\neq z_h\in V_h} \frac{a_{0,h}(v_h,z_h)}{\|z_h\|_{W^{1,p}_h(\Omega)}}\qquad \forall v_h\in V_h,
\end{align*}
where $p\in (1,2]$ satisfies $1/p+1/p^\prime = 1$ and $t' = (p'+1)/p'$ if $k=1$ and $t'=0$ for $k\geq 2$.
\end{lemma}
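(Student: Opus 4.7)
The plan is to establish the claim by a duality argument that leverages the primal $W^{1,p'}$ error estimate of Theorem \ref{W1pTheorem} applied at the conjugate exponent $p'$, together with the symmetry of $a_{0,h}$ in the SIP case. Fix $v_h\in V_h$ and abbreviate
\[
M := \sup_{0\neq z_h\in V_h} \frac{a_{0,h}(v_h,z_h)}{\|z_h\|_{W^{1,p}_h(\Omega)}};
\]
we aim for $\|v_h\|_{W^{1,p'}_h(\Omega)}\lesssim |\log h|^{t'} M$.

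I would start with a continuous dual problem. Given $f\in L^p(\Omega)$, let $w\in W^{2,p}(\Omega)\cap W_0^{1,p}(\Omega)$ solve $\mathcal{L}_0 w = f$; Calderon-Zygmund gives $\|w\|_{W^{2,p}(\Omega)}\lesssim \|f\|_{L^p(\Omega)}$. Since $w$ is continuous with $[w]=0$ and $[A_0\nabla w\cdot \nu]=0$ on interior edges and $w|_{\partial\Omega}=0$, the form \eqref{eqn5} collapses to $a_{0,h}(w,v_h)=(f,v_h)$. Symmetry of $a_{0,h}$ (for $\varepsilon=1$) yields $a_{0,h}(w,v_h)=a_{0,h}(v_h,w)$; if $w_h\in V_h$ denotes the DG approximation of $w$, symmetric Galerkin orthogonality gives $a_{0,h}(v_h,w-w_h)=0$, so
\[
(f,v_h) = a_{0,h}(v_h,w_h) \le M\,\|w_h\|_{W^{1,p}_h(\Omega)}.
\]
The triangle inequality, together with Theorem \ref{W1pTheorem} and Lemma \ref{lem2:3.3}, yields $\|w_h\|_{W^{1,p}_h(\Omega)}\lesssim \|w\|_{W^{2,p}(\Omega)}\lesssim\|f\|_{L^p(\Omega)}$. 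Taking the supremum over $f$ gives $\|v_h\|_{L^{p'}(\Omega)}\lesssim M$.

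This controls only the bulk $L^{p'}$-piece of the target norm. To recover the full broken $W^{1,p'}_h$-norm, I would next pick $F$ in the dual of $W^{1,p'}_h$ rather than in $L^p$, and solve the analogous auxiliary problem at the $W^{2,p'}$ regularity level: find $w\in W^{2,p'}(\Omega)\cap W_0^{1,p'}(\Omega)$ with $\|w\|_{W^{2,p'}}\lesssim \|F\|_\ast$. The same duality/symmetry chain now gives $\langle F,v_h\rangle = a_{0,h}(v_h,w_h)+a_{0,h}(v_h,w-w_h)$, where $w_h$ is the DG approximation of $w$. The first piece is bounded by $M\|w_h\|_{W^{1,p}_h}\lesssim M\|w\|_{W^{2,p'}}$ (using $\Omega$ bounded and $L^{p'}\hookrightarrow L^p$), while the second is controlled by continuity of $a_{0,h}$ times $\|w-w_h\|_{W^{1,p}_h}$. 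The crucial point is that the latter is estimated by applying Theorem \ref{W1pTheorem} \emph{at exponent $p'$}, producing $\|w-w_h\|_{W^{1,p'}_h(\Omega)}\lesssim h|\log h|^{t'}\|w\|_{W^{2,p'}(\Omega)}$ with exactly the claimed exponent $t'=(p'+1)/p'$ (resp.\ $t'=0$) when $k=1$ (resp.\ $k\ge 2$). Choosing $F$ appropriately and taking the supremum then yields $\|v_h\|_{W^{1,p'}_h(\Omega)}\lesssim |\log h|^{t'} M$ after absorbing a contribution proportional to $h|\log h|^{t'}\|v_h\|_{W^{1,p'}_h}$, which is absorbable for $h$ small.

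The principal obstacle I expect is the passage from the easy $L^{p'}$ duality bound to the full broken $W^{1,p'}_h$ norm: naively combining $\|v_h\|_{L^{p'}}\lesssim M$ with the inverse inequality $\|v_h\|_{W^{1,p'}_h}\lesssim h^{-1}\|v_h\|_{L^{p'}}$ (via Lemmas \ref{lem2:5} and \ref{lem2:1}) loses an $h^{-1}$ factor, which overwhelms the $|\log h|^{t'}$ improvement. The correct remedy is to engineer the dual data $F$ so that pairing through the symmetric bilinear form genuinely recovers the bulk gradient, jump, and gradient-average pieces of the broken norm—this is where the reconstruction operator $E_h$ of Lemma \ref{lem2:55} (which maps $V_h$ into $H^2_0$ with quantitative estimates) can be used to decompose $v_h = E_h v_h + (v_h - E_h v_h)$, apply continuous regularity to $E_h v_h$, and absorb the remainder by super-approximation. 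Once the matching of norms is handled, the log factor emerges directly from invoking Theorem \ref{W1pTheorem} at the conjugate exponent, and no further tricks are needed.
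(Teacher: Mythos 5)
There is a genuine gap, and it sits exactly where you flag your ``principal obstacle'': your argument never actually produces the $W^{1,p'}_h(\Omega)$-norm of $v_h$ on the left-hand side. A duality pairing $\langle F,v_h\rangle$ with data $F$ regular enough that the auxiliary solution $w$ lies in $W^{2,p'}(\Omega)$ forces $F\in L^{p'}(\Omega)$, and then $\sup_F \langle F,v_h\rangle/\|F\|_{L^{p'}}$ only recovers $\|v_h\|_{L^p(\Omega)}$ (weaker even than your first step). Conversely, a functional strong enough to represent $\|v_h\|_{W^{1,p'}_h(\Omega)}$ by duality (morally $-\nabla\cdot(|\nabla_h v_h|^{p'-2}\nabla_h v_h)$ plus jump contributions) is not in $L^{p'}$ and admits no $W^{2,p'}$ elliptic regularity, so the chain ``solve the dual problem, apply Theorem~\ref{W1pTheorem} at $p'$'' breaks at its first link. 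The proposed remedy --- ``engineer the dual data $F$'' using $E_h$ --- is precisely the missing construction and is not supplied. There is also a circularity in your first step: bounding $\|w_h\|_{W^{1,p}_h(\Omega)}\lss\|w\|_{W^{2,p}(\Omega)}$ for $p\in(1,2)$ invokes Theorem~\ref{W1pTheorem} at the exponent $p<2$, but that case of the theorem is proved in the paper \emph{from} Lemma~\ref{infsupAuxLem}; only the $p\ge 2$ case is available at this point.

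The paper avoids duality on $v_h$ altogether. It defines $v\in H^1_0(\Omega)$ by $\mathcal{L}_0 v=\mathcal{L}_{0,h}v_h$ and observes that, by consistency, $v_h$ \emph{is} the DG approximation of $v$; since $\mathcal{L}_{0,h}v_h\in L^{p'}(\Omega)$, Calderon--Zygmund gives $v\in W^{2,p'}(\Omega)$, and Theorem~\ref{W1pTheorem} applied at the exponent $p'\ge 2$ (which is legitimately available) yields
\begin{align*}
\|v_h\|_{W^{1,p'}_h(\Omega)}\les |\log h|^{t'}\bigl(\|v\|_{W^{1,p'}(\Omega)}+h\|v\|_{W^{2,p'}(\Omega)}\bigr).
\end{align*}
The two continuous norms are then converted into the inf-sup quantity: $\|v\|_{W^{1,p'}(\Omega)}$ by the continuous $W^{1,p'}$--$W^{1,p}$ duality for $\mathcal{L}_0$ combined with the $W^{1,p}$-stability of the $L^2$ projection onto $V_h$, and $h\|v\|_{W^{2,p'}(\Omega)}\les h\|\mathcal{L}_{0,h}v_h\|_{L^{p'}_h(\Omega)}$ by the inverse estimate $h\|z_h\|_{W^{1,p}_h(\Omega)}\les\|z_h\|_{L^p(\Omega)}$. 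If you want to salvage your approach, the step you must add is this identification of $v_h$ as the Galerkin projection of a genuinely $W^{2,p'}$-regular function whose datum is the discrete residual $\mathcal{L}_{0,h}v_h$; without it, no choice of dual data $F$ will close the norm mismatch.
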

\begin{proof}
For a fixed $v_h\in V_h$, let $v\in H^1_0(\Omega)$ satisfy $\mathcal{L}_0 v = \mathcal{L}_{0,h} v_h$
in $\Omega$.  Then $v\in W^{2,p^\prime}(\Omega)$ with
\begin{align}\label{CZOnceAgain}
\|v\|_{W^{2,p^\prime}(\Omega)}\les \|\mathcal{L}_{0,h} v_h \|_{L^{p^\prime}(\Omega)}.
\end{align}
Moreover, due to the definition of $\mathcal{L}_{0,h}$ and the consistency of $a_{0,h}(\cdot,\cdot)$, we find that
\begin{align*}
a_{0,h}(v,z_h) = a_{0,h}(v_h,z_h)\qquad \forall z_h\in V_h.
\end{align*}

Since $p^\prime\geq 2$, we can apply the results of the previous section to conclude that
\begin{align}\label{infsupAuxLemLine1}
\|v_h\|_{W^{1,p^\prime}_h(\Omega)}\les |\log h|^{t'} \big(\|v\|_{W^{1,p^\prime}(\Omega)}+h \|v\|_{W^{2,p^\prime}(\Omega)}\big).
\end{align}
Denote by $\mathcal{P}_h:L^2(\Omega)\to V_h$ the $L^2$ projection onto $V_h$.  We then write
%\footnote{The first inequality follows from $W^{1,p^\prime}$ regularity of $\mathcal{L}_0$}
\begin{align*}
\|v\|_{W^{1,p^\prime}(\Omega)}
&\les \sup_{z\in W^{1,p}_0(\Omega)} \frac{(A_0 \nab v,\nab z)}{\|z\|_{W^{1,p}(\Omega)}}
= \sup_{z\in W^{1,p}_0(\Omega)} \frac{(\mathcal{L}_0 v,z)}{\|z\|_{W^{1,p}(\Omega)}} %\\
= \sup_{z\in W^{1,p}_0(\Omega)} \frac{(\mathcal{L}_{0,h} v_h,\mathcal{P}_hz)}{\|z\|_{W^{1,p}(\Omega)}}.
\end{align*}
Standard arguments show that $\|\mathcal{P}_hz\|_{W^{1,p}_h(\Omega)}\les \|z\|_{W^{1,p}(\Omega)}$ for all $z\in W^{1,p}(\Omega)$;
thus,
\begin{align}\label{infsupAuxLemLine2}
\|v\|_{W^{1,p^\prime}(\Omega)}\les \sup_{0\neq z_h\in V_h} \frac{(\mathcal{L}_{0,h} v_h,z_h)}{\|z_h\|_{W^{1,p}_h(\Omega)}}
=\sup_{0\neq z_h\in V_h} \frac{a_{0,h}(v_h,z_h)}{\|z_h\|_{W^{1,p}_h(\Omega)}}.
\end{align}

Likewise, using \eqref{CZOnceAgain}, \eqref{norm_equiv} and an inverse estimate yields 
\begin{align}\label{infsupAuxLemLine3}
 \|v\|_{W^{2,p^\prime}(\Omega)}\les \|\mathcal{L}_{0,h} v_h\|_{L^{p^\prime}_h(\Omega)}
  = \sup_{0\neq z_h\in V_h} \frac{a_{0,h}(v_h,z_h)}{\|z_h\|_{L^{p}(\Omega)}}
  \les h^{-1} \sup_{0\neq z_h\in V_h} \frac{a_{0,h}(v_h,z_h)}{\|z_h\|_{W^{1,p}(\Omega)}}.
  \end{align}
  Applying the estimates \eqref{infsupAuxLemLine2}--\eqref{infsupAuxLemLine3}
  to \eqref{infsupAuxLemLine1} then gives the desired result. 
\end{proof}

We now prove Theorem \ref{W1pTheorem} for $1<p<2$.
To this end, for $w_h\in V_h$ and $w\in W^{2,p}(\Omega)\cap W^{1,p}_0(\Omega)$
satisfying \eqref{ErrorEquation},  let $v_h\in V_h$ be the unique solution to
\begin{align*}
a_{0,h}(v_h,z_h) = \int_{\Omega} |\nab_h w_h|^{p-2}  \nab_h w_h\cdot \nab_h z_h\dx[x]
+\sum_{e\in \mce} h_e^{1-p} \int_e |[w_h]|^{p-2} [w_h][z_h]\dx[S] %\qquad \forall z_h\in V_h
\end{align*}
for all $z_h\in V_h$. Setting $z_h = w_h$ and using a scaling argument yields
\begin{align*}
\|w_h\|_{W^{1,p}_h(\Omega)}^p \les a_{0,h}(v_h,w_h).
\end{align*}
Moreover, Lemma \ref{infsupAuxLem} and H\"older's inequality gets
\begin{align*}
\|v_h\|_{W^{1,p^\prime}_h(\Omega)}\les |\log h|^{t'} \|w_h\|_{W^{1,p}_h(\Omega)}^{p-1}.
\end{align*}
Consequently,
\begin{align*}
 \|w_h\|_{W^{1,p}_h(\Omega)} 
&= \frac{\|w_h\|^p_{W^{1,p}_h(\Omega)}}{\|w_h\|^{p-1}_{W^{1,p}_h(\Omega)}}
\les |\log h|^{t'} \frac{a_{0,h}(v_h,w_h)}{\|v_h\|_{W^{1,p^\prime}_h(\Omega)}}\\
&= |\log h|^{t'} \frac{a_{0,h}(w,v_h)}{\|v_h\|_{W^{1,p^\prime}_h(\Omega)}}
\les{|\log h|^{t'}}\|w\|_{W^{1,p}_h(\Omega)}.
\end{align*} 
Standard arguments then show that this estimate implies
\begin{align}\label{W1pEstimatePart4}
 \|w-w_h\|_{W^{1,p}_h(\Omega)} \les |\log h|^{t'} h\|w\|_{W^{2,p}(\Omega)}\quad 1<p<2.
 \end{align}
This completes the proof of Theorem \ref{W1pTheorem}
upon noting that $t' = (p'+1)/p' = (2p-1)/p \le (p+1)/p = t$
for $p\in (1,2]$.

%%%%%%%%%%%%%%%%%%%%%%%%%%%%%%%%%%%%%%%%%%%%%%%
\subsection{DG discrete Calderon-Zygmund estimates for PDEs with constant coefficients}\label{section3.2}
The {goal of this subsection is to establish} a stability result for the operator 
$\L_{0,h}^{\e}$ in the $W_h^{2,p}$-norm, which is a discrete counterpart of (\ref{eqn2:9}). Such an estimate 
can be regarded as a DG discrete Calderon-Zygmund estimate for $\L_{0,h}^{\e}$.  

%%%%%%%%%%%%%%%%%%%%%
%%% lem2:7 %%%%%%%%%%%%%%
%%%%%%%%%%%%%%%%%%%%%
%\medskip
{
\begin{theorem} \label{lem2:7} 
(i) For $\e=1$ and $1< p< \infty$ we have
\begin{align}
\|w_h\|_{W_h^{2,p}(\W)}\lss |\log h|^t  \|\L^\e_{0,h} w_h\|_{L^p(\W)} \qquad \forall w_h\in V_h, \label{eqn2:10}
\end{align}
where $t = (p+1)/p$ if $k=1$ and $t=0$ if $k\geq 2$.

\medskip
(ii) \eqref{eqn2:10} also holds with $t=0$ for $\e \in \{1,0,-1\}$ and $p=2$.
\end{theorem}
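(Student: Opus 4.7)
The plan is to reduce the discrete Calderon-Zygmund estimate to the continuous Calderon-Zygmund estimate for $\mathcal{L}_0$ via a consistency argument, combined with the $W^{1,p}_h$ error estimate of Theorem \ref{W1pTheorem} and the inverse inequality of Lemma \ref{lem2:1}. Given $w_h \in V_h$, I would first set $\phi_h := \L^\e_{0,h} w_h \in V_h \subset L^p(\W)$ and let $w \in W^{2,p}(\W) \cap W^{1,p}_0(\W)$ be the unique strong solution of $\L_0 w = \phi_h$. The PDE Calderon-Zygmund estimate then yields $\|w\|_{W^{2,p}(\W)} \lesssim \|\phi_h\|_{L^p(\W)}$.

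The next step is to recognize $w_h$ as the SIP-DG approximation of $w$. For $\e = 1$, consistency of the bilinear form shows $a^\e_{0,h}(w, v_h) = (\phi_h, v_h) = a^\e_{0,h}(w_h, v_h)$ for every $v_h \in V_h$, so the Galerkin orthogonality \eqref{ErrorEquation} holds for $w - w_h$. Theorem \ref{W1pTheorem} therefore gives $\|w - w_h\|_{W^{1,p}_h(\W)} \lesssim h |\log h|^t \|w\|_{W^{2,p}(\W)}$.

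To pass from $W^{1,p}_h$ to $W^{2,p}_h$, I would insert the nodal interpolant $\tI w$ and split
\begin{align*}
\|w_h\|_{W^{2,p}_h(\W)} \leq \|w_h - \tI w\|_{W^{2,p}_h(\W)} + \|\tI w\|_{W^{2,p}_h(\W)}.
\end{align*}
The second term is controlled by $\|w\|_{W^{2,p}(\W)}$ using standard interpolation estimates together with Lemma \ref{lem2:3.3}. Since $w_h - \tI w \in V_h$, the first term can be bounded via the inverse inequality of Lemma \ref{lem2:1} as
\begin{align*}
\|w_h - \tI w\|_{W^{2,p}_h(\W)} \lesssim h^{-1}\bigl(\|w - w_h\|_{W^{1,p}_h(\W)} + \|w - \tI w\|_{W^{1,p}_h(\W)}\bigr) \lesssim |\log h|^t \|w\|_{W^{2,p}(\W)},
\end{align*}
where the last inequality uses Theorem \ref{W1pTheorem} on the error term and a standard interpolation estimate on the consistency term. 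Chaining these bounds with Step 1 produces \eqref{eqn2:10}.

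For part (ii), the same argument applies verbatim once Theorem \ref{W1pTheorem} is replaced by the classical quasi-optimal energy estimate \eqref{W12ErrorEstimate}, which holds with $t=0$ for all three choices $\e \in \{1,0,-1\}$ (provided $\gamma_e \geq \gamma^*$ when $\e \in \{0,1\}$). The main obstacle in this proof is really the consistency step combined with the logarithmic factor propagating from the $W^{1,p}$ estimate, and ensuring that the interpolation and inverse inequality do not introduce additional powers of $h$; all the heavy lifting (the dyadic decomposition and duality arguments yielding Theorem \ref{W1pTheorem}) has already been completed in the preceding subsection.
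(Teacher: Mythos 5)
Your proposal is correct and follows essentially the same route as the paper: define $w=\L_0^{-1}\phi_h$ with $\phi_h=\L_{0,h}^{\e}w_h$, use consistency to identify $w_h$ as the DG approximation of $w$, invoke Theorem \ref{W1pTheorem} (or \eqref{W12ErrorEstimate} for part (ii)), and then combine the continuous Calderon--Zygmund estimate, nodal interpolation estimates, and the inverse inequality applied to $w_h-\tI w$ to upgrade from $W^{1,p}_h$ to $W^{2,p}_h$. The only cosmetic difference is that the paper's triangle inequality passes through $\|w-\tI w\|_{\Wh(\W)}+\|\tI w-w_h\|_{\Wh(\W)}+\|w\|_{\Wh(\W)}$ while you group the terms slightly differently; the substance is identical.
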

}

\medskip
\begin{proof}
(i) We observe that \eqref{eqn2:10} is equivalent to showing
\begin{align}\label{eqn:2.40}
\|(\L_{0,h}^{\e})^{-1}\phi_h\|_{\Wh(\W)}\lss { |\log h|^t} \|\phi_h\|_{L^p(\W)} \qquad\forall\phi_h\in V_h.
\end{align}

For any $\phi_h\in V_h$, let $w:=\L_0^{-1}\phi_h\in W^{2,p}(\W)\cap W_0^{1,p}(\W)$ and 
$w_h:=(\L_{0,h}^{\e})^{-1}\phi_h\in V_h$.  Since $w\in W^{2,p}(\W)\cap W_0^{1,p}(\W)$ we have
\begin{alignat*}{2}
a_{0,h}^{\e}(w,v_h) &= (\phi_h,v_h) = a_{0,h}^{\e}(w_h,v_h)\qquad \forall v_h\in V_h.
\end{alignat*}
Thus $w_h$ is the IP-DG approximate solution to $w$. 
Applying Theorem \ref{W1pTheorem} %(or \eqref{W12ErrorEstimate} in the case $p=2$), 
and the elliptic regularity estimate, we obtain
\begin{align}\label{W1pEstimateLineABC}
\|w-w_h\|_{W^{1,p}_h(\Omega)}\les |\log h|^t h \|w\|_{W^{2,p}(\Omega)}\les |\log h|^t h\|\phi_h\|_{L^p(\W)}.
\end{align}
Moreover, by Lemma \ref{lem2:3.3} and the Calderon-Zygmund estimate for $\L_0$ we have
\begin{equation}\label{CZ_estimate}
\|w\|_{W^{2,p}_h(\W)} \leq \|w\|_{W^{2,p}(\W)} \lss \|\varphi_h\|_{L^p(\W)}.
\end{equation}

Denote by $I_h:C^0(\Omega)\to V_h$ the nodal interpolation operator onto $V_h$.  
By  finite element interpolation theory \cite{SIAM:Ciarlet} we have
\begin{align}\label{StandardNodalLine}
h^{-1} \|w-I_h w\|_{W^{1,p}_h(\Omega)}+  \|w-I_h w\|_{W^{2,p}_h(\Omega)}\les \|w\|_{W^{2,p}(\Omega)}.
\end{align}
Therefore by the triangle inequality, an inverse estimate, Lemma  \ref{lem2:3.3},  \eqref{W1pEstimateLineABC},
and \eqref{CZ_estimate}, we obtain
\begin{align*}
\|w_h\|_{W^{2,p}_h(\Omega)}
&\le \|w- I_h w\|_{W^{2,p}_h(\Omega)} +\|I_h w-w_h\|_{W^{2,p}_h(\Omega)}
+\|w\|_{W^{2,p}_h(\Omega)}\\
&\lss h^{-1} \|I_h w-w_h\|_{W^{1,p}_h(\Omega)}+ \|\varphi_h\|_{L^p(\Omega)}\\
&\le h^{-1}\big(\|w-w_h\|_{W^{1,p}_h(\Omega)}+\|w-I_h w\|_{W^{1,p}_h(\Omega)}\big) + \|\varphi_h\|_{L^p(\Omega)}\\
&\lss |\log h|^t \|\varphi_h\|_{L^p(\Omega)} = |\log h|^t \|\mathcal{L}_{0,h} w_h \|_{L^p(\Omega)}.
\end{align*}

(ii) The proof of this part is exactly same as that of Part (i), the only difference is that 
now \eqref{W12ErrorEstimate}, instead of \eqref{W1pTheoremMainLine},  should be called in the proof. 
\end{proof}

%Section 4-------------------------------------------------------------------
\section{IP-DG methods and their convergence analysis} \label{section4}

As mentioned in Section \ref{section1}, our primary goal in this paper to develop convergent IP-DG methods
for approximating the $W^{2,p}$ strong solution to the boundary value problem \eqref{NonDivProblem}.
We assume that equation \eqref{eqn1:1} is uniformly elliptic, precisely, we assume 
$A\in \left[C^0(\overline{\W})\right]^{n\times n}$ is positive definite, that is,
there exist constants $\Lambda>\lambda>0$ such that
\begin{align} \label{eqn3:01}
\lambda |\xi|^2\leq A(x) \leq \Lambda |\xi|^2 \quad\text{ for all } \xi\in\R^n \text{ and } x\in\overline{\W}.
\end{align}
We also assume that the solution $u$ satisfies the following Calderon-Zygmund estimate:
\begin{align} \label{eqn3:02}
\|u\|_{W^{2,p}(\W)}\lss \|f\|_{L^p(\W)}.
\end{align}
It is well-known \cite[Chapter 9]{Sp:DT} that the above estimate holds for any $f\in L^{p}(\W)$ with $1< p <\infty$ 
if $\partial\W\in C^{1,1}$. Moreover, when $n,p=2$, \eqref{eqn3:02} also holds if $\Omega$ is a convex domain 
\cite{SIAM:BCO,Sp:Br}.  

\subsection{Formulation of IP-DG methods}\label{section4.1}

%For general $A\in[C^0(\W)]^{n\times n}$ 
We follow the same recipe as in the constant coefficient case to build
our IP-DG methods. To this end, we momentarily assume $A\in [C^1(\W)]^{n\times n}$, so that we can rewrite the 
PDE \eqref{eqn1:1} in divergence form as follows:
\begin{align}
-\grad\cdot(A\grad u) + (\grad\cdot A)\cdot \grad u = f, \label{eqn4}
\end{align}
where $\grad\cdot A$ is defined row-wise. We then define the following (standard) IP-DG methods 
for problem \eqref{eqn4} by seeking $u_h\in V_h$ such that 
\begin{align}\label{eqn:3.4}
&\int_{\Omega} (A\grad_h u_h)\cdot\grad_h v_h \dx[x] + \int_{\Omega}((\grad \cdot A)\cdot\grad_h u_h)v_h \dx[x] \\
&\qquad-\sum_{e\in\E}\int_{e}\{ A\grad u_h\dv\} [v_h] \dx[S]-\e \sum_{e\in\E}\int_{e}\{ A\grad v_h\dv\} [u_h] \dx[S] \nonumber \\
%&-\sum_{e\in\EB}\int_e A\grad u_h\dv v_h \dx[S] -\e\sum_{e\in\EB}\int_e A\grad v_h\dv u_h \dx[S]  \nonumber \\
&\qquad+\sum_{e\in\E} \int_e \frac{\ge}{h_e}[u_h][v_h]\dx[S] %+ \sum_{e\in\EB} \frac{\ge}{h_e} u_h v_h \dx[S] 
= \int_{\W} fv_h \dx[x], \nonumber
\end{align}
where $\ge\geq\gamma_*(\|A\|_{L^\infty(\Omega)},\mathcal{T}_h)>0$.
We emphasize that $\gamma^*$ is independent of the derivatives of $A$.

Now come back to our case in hand with $A\in [C^0(\W)]^{n\times n}$. Clearly, the term $\grad\cdot A$ does 
not exist as a function (it is in fact a Radon measure), so the above formulation is not defined for 
the case we are considering. To overcome this difficulty, our idea is to apply the DG integration by parts formula
\eqref{eqn2:8} to the first term on the left-hand side of \eqref{eqn:3.4}, yielding %(magically) 
\begin{align}
a_h(w_h,v_h) :=& -\int_{\W}(A:D_h^2w_h)v_h \dx[x] + \sum_{e\in\EI}\int_e [ A\grad w_h\dv] \{v_h\} \dx[S] \label{eqn6} \\
&-\e \sum_{e\in\E}\int_{e}\{ A\grad v_h\dv\} [w_h] \dx[S] +\sum_{e\in\E}\int_e \frac{\ge}{h_e}[w_h][v_h] \dx[S]. \nonumber %-\e \sum_{e\in\EB}\int_e A\grad v_h\dv w_h \dx[S] \nonumber \\
&  %+ \sum_{e\in\EB}\int_e \frac{\ge}{h_e}w_hv_h \dx[S].\nonumber
\end{align}
No derivative of $A$ appears in the above new form of $a_h^{\e}(\cdot,\cdot)$; thus, it is well-defined on $V_h\times V_h$.
This leads to the following definition.

\begin{defn}
Our IP-DG methods are defined by seeking $u_h\in V_h$ such that 
\begin{align} \label{eqn:dis}
a^\e_h(u_h,v_h) = (f,v_h) \qquad\forall v_h\in V_h, \quad \e\in \{1,0,-1\}.
\end{align}
\end{defn}
When $\e=1$ we will refer to the method as ``symmetrically induced'' even though the bilinear form is  
not symmetric. Likewise, $\e=0$ and $\e=-1$ yields an ``incompletely induced'' and ``non-symmetrically induced'' method,
respectively.

%%%%%%%%%%%%%%%%%%%%%%%%%
\subsection{Stability analysis}\label{section4.2} 
As in Section \ref{sec:NewConstant} we can define the IP-DG approximation $\L_h^{\e}$ of $\L$ on $V_h$ using the bilinear 
form $a^\e_h(\cdot,\cdot)$; precisely, we define $\L^\e_h:V_h\to V_h$ by
\begin{align}
\bigl(\L^\e_h w_h,v_h\bigr):=a^\e_h(w_h,v_h) \qquad\forall w_h,v_h\in V_h.
\end{align}
Since we can extend the domain of $a_h^\e(\cdot,\cdot)$ to $W^{2,p}(\Th)\times W^{2,p'}(\Th)$, then the domain and co-domain 
of $\L_h$ can be extended to the broken Sobolev spaces $W^{2,p}(\Th)$ and $(W^{2,p'}(\Th))^*$ respectively.  

The goal of this subsection is to establish a DG discrete Calderon-Zygmund estimate similar
to \eqref{eqn2:10} for the operator $\L^\e_h$.  {To this end, our main idea  is to 
mimic, at the discrete level, the ``freezing the coefficients'' technique and the covering argument found in Schauder 
theory and $W^{2,p}$ strong solution theory \cite[Chapters 6 and 9]{Sp:DT}. Since $A$ is continuous, 
we show that in a small ball $B_{\d} (\subset \W)$ 
$A$ behaves as if it were constant. This allows us to conclude that $\L^\e_h$ is locally very close
to $\L^\e_{0,h}$ in the ball $B_{\d}(x_0)$ for any $x_0\in \W$.
By applying the above mentioned ``freezing the coefficients'' technique and covering
argument to the formal adjoint of $\L^\e_h$, we are able to prove a global left-side inf-sup 
condition for $\L^\e_h$. Then by employing a duality argument, we derive the desired 
discrete Calderon-Zygmund estimate for $\L^\e_h$.}

We now proceed to establish a few auxiliary lemmas which will be needed to show the desired estimate.

%%%%%%%%%%%%%%%%%%%%%%%%
%%%%lem3:1%%%%%%%%%%%%%%%%%
%%%%%%%%%%%%%%%%%%%%%%%%
\begin{lemma} \label{lem3:1}
For all $\d>0$, there exists $R_{\d}>0$ and $h_{\d}>0$ such that for all $x_0\in\W$ and $A_0\equiv A(x_0)$
\begin{align}
\|(\L_h^{\e}-\L_{0,h}^{\e})w\|_{L_h^p(B_{R_{\d}}(x_0))}
\lss \d \|w\|_{\Wh(B_{R_{\d}}(x_0))} \quad\forall w\in W^{2,p}(\Th),\,\forall h\leq h_{\d}.
\end{align}
Here, $B_{R_{\d}}(x_0):=\{x\in \Omega:\ |x-x_0|<R_{\delta}\}$
denotes the ball with center $x_0$ and radius $R_{\d}$.
\end{lemma}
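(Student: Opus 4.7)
The strategy is to exploit the uniform continuity of $A$ on the compact set $\overline{\W}$: given $\delta>0$, we can choose $R_\delta>0$ so that $|A(x)-A(y)|<\delta$ whenever $x,y\in\overline{\W}$ with $|x-y|<R_\delta$, and then pick $h_\delta>0$ small enough that $R_\delta\ge h_\delta$ (so $V_h(B_{R_\delta}(x_0))$ is nontrivial) and that the shape-regular trace and inverse estimates from Section~\ref{section2} apply with uniform constants for $h\le h_\delta$. Writing $A_0=A(x_0)$, the point is that on every element $T$ and every face $e$ contained in $\overline{B_{R_\delta}(x_0)}$ one has the pointwise bound $|A(\cdot)-A_0|\le\delta$.

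Subtracting \eqref{eqn5} from \eqref{eqn6}, the bilinear-form difference reads
\begin{align*}
(a_h^\e-a_{0,h}^\e)(w,v_h)
&= -\int_\W \bigl((A-A_0){:}D_h^2 w\bigr)\, v_h \,\dx[x]
+\sum_{e\in\EI}\int_e \bigl[(A-A_0)\grad w\dv\bigr]\{v_h\}\,\dx[S]\\
&\qquad-\e\sum_{e\in\E}\int_e \bigl\{(A-A_0)\grad v_h\dv\bigr\}[w]\,\dx[S].
\end{align*}
By the definition \eqref{L2norm} of $\|\cdot\|_{L^p_h(D)}$, it suffices to bound the above quantity by $\delta\,\|w\|_{\Wh(D)}\|v_h\|_{L^{p'}(D)}$ for every test function $v_h\in V_h(B_{R_\delta}(x_0))$. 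Because any such $v_h$ vanishes on every element not fully contained in $\overline{D}:=\overline{B_{R_\delta}(x_0)}$, every element and face giving a nonzero contribution above lies inside $\overline{D}$, where the pointwise bound $|A-A_0|\le\delta$ is available.

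The volume term is controlled directly by H\"older, giving $\le\delta\|D_h^2w\|_{L^p(D)}\|v_h\|_{L^{p'}(D)}$. For the jump term I would pair the factor $h_e^{(1-p)/p}\|[\grad w]\|_{L^p(e)}$ already present in the $\Wh$-seminorm \eqref{eqn2:2} with its H\"older conjugate $h_e^{(p-1)/p}\|\{v_h\}\|_{L^{p'}(e)}$, so that a discrete H\"older summation combined with the DG trace inequality \eqref{eqn2:12} (which yields $\sum_e h_e\|\{v_h\}\|_{L^{p'}(e)}^{p'}\lss\|v_h\|_{L^{p'}(D)}^{p'}$) produces the desired bound. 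The symmetrization term is the awkward one: I pair the $\gamma_e h_e^{(1-2p)/p}\|[w]\|_{L^p(e)}$ factor from \eqref{eqn2:2} against $\{\grad v_h\}$, after first applying a scaled trace inequality followed by an inverse estimate to obtain $\|\{\grad v_h\}\|_{L^{p'}(e)}\lss h_e^{-1-1/p'}\|v_h\|_{L^{p'}(T_e)}$. An exponent check then shows that the powers of $h_e$ cancel exactly (since $p'(p-1)/p=1$ and $(2p-1)/p -1 -1/p' = 0$), and a final discrete H\"older summation, together with the assumption $\gamma_e\gtrsim 1$, gives the $\delta\,\|w\|_{\Wh(D)}\|v_h\|_{L^{p'}(D)}$ bound. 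Taking the supremum over $0\ne v_h\in V_h(B_{R_\delta}(x_0))$ then finishes the proof. The main obstacle is the last (symmetrization) term, because $\{\grad v_h\}$ carries one more derivative than the $W^{2,p}_h$-seminorm naturally pairs with $[w]$; transferring that derivative off $v_h$ via inverse estimates while arranging for the resulting mesh-weight exponents to cancel against those in \eqref{eqn2:2} is the only delicate bookkeeping in the argument.
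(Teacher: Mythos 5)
Your proposal is correct and follows essentially the same route as the paper: uniform continuity of $A$ to fix $R_\delta$, the identical three-term decomposition of $(a_h^\e-a_{0,h}^\e)(w,v_h)$ obtained by subtracting \eqref{eqn5} from \eqref{eqn6} (the penalty terms cancel), H\"older pairings matched to the weights in \eqref{eqn2:2}, and trace plus inverse estimates to absorb the extra derivative on $\{\grad v_h\}$ in the symmetrization term (the paper packages this as the factor $h\|\grad_h v_h\|_{L^{p'}}\lss\|v_h\|_{L^{p'}}$ at the end, which is the same bookkeeping). Your exponent check and the observation that $v_h\in V_h(B_{R_\delta}(x_0))$ localizes all contributions to elements and faces inside $\overline{B}_{R_\delta}(x_0)$ are both sound.
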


\begin{proof}
Since $A$ is continuous on $\overline{\W}$, then it is uniformly continuous.  Therefore, for every 
$\d>0$ there exists $R_{\d}>0$ such that if $x,y\in\W$ satisfies $|x-y|<R_{\d}$, we have $|A(x)-A(y)|<\d$.  
Consequently, for any $x_0\in\W$ 
\begin{align}
\|A-A_0\|_{L^{\infty}(B_{R_{\d}})}\leq\d,
\end{align}
where we have used the shorthand notation $B_{R_{\d}}:=B_{R_{\d}}(x_0)$.

Set $h_{\d}=\min\{h_0,\frac{R_{\d}}{4}\}$ and let $0<h<h_{\d}$, $w\in W^{2,p}(\Th)$, and $v_h\in V_h(B_{R_{\d}})$.  
Since $(\L_{0,h}^{\e}-\L_h^{\e})w\in W^{2,p}(\Th)$, it follows from \eqref{eqn5} and \eqref{eqn6} that for every 
$v_h\in V_h(B_{R_{\d}})$ we have
\begin{align*}
&\bigl((\L_{0,h}^{\e}-\L_h^{\e})w,v_h \bigr) 
=-\int_{\Omega\cap B_{R_{\d}}}((A_{0}-A):D^2_hw)v_h \dx[x] \\
&\quad +\sum_{e\in\EI}\int_{e\cap\overline{B}_{R_{\d}}}[(A_{0}-A)\grad w\dv] \{v_h\} \dx[S] 
%
%&\quad
-\e\sum_{e\in\E}\int_{e\cap\overline{B}_{R_{\d}}}\{ (A-A_0)\grad v_h\dv\} [w_h] \dx[S] \\
%&\quad -\e\sum_{e\in\EB}\int_{e\cap\overline{B}_{R_{\d}}} (A-A_0)\grad v_h\dv w_h \dx[S]\\
&\leq \|A-A_{0}\|_{L^{\infty}(B_{R_{\d}})} \left(\|D^2_hw\|_{L^p(\Omega\cap B_{R_{\d}})}
\|v_h\|_{L^{p'}(\Omega\cap B_{R_{\d}})} \right. \\
&\quad + \Bigl(\sum_{e\in\E}h_e^{1-2p}\|[w]\|_{L^p(e\cap\overline{B}_{R_{\d}}))}^p \Bigr)^\frac{1}{p}
\Bigl(\sum_{e\in\E}h_e h_e^{p'}\|\{\grad v_h\dv\}\|_{L^{p'}(e\cap\overline{B}_{R_{\d}}))}^{p'} \Bigr)^\frac{1}{p'} \\
%
%&\quad + \Bigl(\sum_{e\in\EB}h_e^{1-2p}\|w\|_{L^p(e\cap\overline{B}_{R_{\d}}))}^p \Bigr)^\frac{1}{p}
%\Bigl(\sum_{e\in\EB}h_e h_e^{p'}\|\grad v_h\dv\|_{L^{p'}(e\cap\overline{B}_{R_{\d}}))}^{p'} \Bigr)^\frac{1}{p'} \\
%
&\quad \left. + \Bigl(\sum_{e\in\EI}h_e^{1-p}\|[\grad w]\|_{L^{p}(e\cap\overline{B}_{R_{\d}})}^p\Bigr)^{\frac{1}{p}}
\Bigl(\sum_{e\in\EI}h_e\|\{v_h\}\|_{L^{p'}(e\cap\overline{B}_{R_{\d}})}^{p'} \Bigr)^{\frac{1}{p'}}\right)\\
%\end{align*}
%\begin{align*}
&\lss \|A-A_{0}\|_{L^{\infty}(B_{R_{\d}})}\|w\|_{\Wh(B_{R_{\d}})}\Bigl(\|v_h\|_{L^{p'}(B_{R_{\d}})}
+h\|\grad_h v_h\|_{L^{p'}(B_{R_{\d}})}\Bigr) \\
&\lss \d \|w\|_{\Wh(B_{R_{\d}})}\|v_h\|_{L^{p'}(B_{R_{\d}})} 
= \d \|w\|_{\Wh(B_{R_{\d}})}\|v_h\|_{L^{p'}(B_{R_{\d}})}.
\end{align*}
Dividing both sides by $\|v_h\|_{L^{p'}(B_{R_{\d}})}$ yields the desired estimate.  The proof is complete.
\end{proof}

The next lemma shows that $\L^\e_h$ is locally a bounded operator on $W^{2,p}(\Th)$.
%%%%%%%%%%%%%%%%%%%
%%% lem3:3 %%%%%%%%%%%%
%%%%%%%%%%%%%%%%%%%
\begin{lemma} \label{lem3:3} 
For any $x_0\in \W$ and $R\geq h$, there holds
\begin{align}
\|\L_h^{\e} w\|_{L_h^p(B_R(x_0))}\lss \|w\|_{\Wh(B_{R}(x_0))}\qquad\forall w\in W^{2,p}(\Th). %,\,\forall h\leq h_1.
\end{align}
\end{lemma}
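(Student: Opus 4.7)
By the definition of the $L^p_h$ semi-norm in \eqref{L2norm}, it suffices to establish the bound
\[
|a^\e_h(w, v_h)| \lss \|w\|_{W^{2,p}_h(B_R(x_0))} \|v_h\|_{L^{p'}(B_R(x_0))} \qquad \forall v_h \in V_h(B_R(x_0)).
\]
Because $v_h$ vanishes identically outside $B_R(x_0)$, each volume integral in the expression \eqref{eqn6} for $a_h^\e(w,v_h)$ reduces to an integral over $\Omega\cap B_R(x_0)$, and each edge integral reduces to a sum over edges lying in $\overline{B_R(x_0)}$. The plan is to estimate the four terms of $a_h^\e(w, v_h)$ separately by Hölder's inequality, choosing on each edge a splitting of the weight so that one factor matches a contribution to $\|w\|_{W^{2,p}_h(B_R(x_0))}$ while the other reduces to $\|v_h\|_{L^{p'}(B_R(x_0))}$ via Lemma~\ref{lem2:2} and, where needed, inverse estimates.

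The volume term $-\int(A:D_h^2w)v_h$ is bounded immediately by $\|A\|_{L^\infty}\|D_h^2w\|_{L^p(B_R)}\|v_h\|_{L^{p'}(B_R)}$. For the flux term $\sum_{e\in\EI}\int_e[A\nabla w\cdot\nu_e]\{v_h\}$ the splitting $1=h_e^{(1-p)/p}\cdot h_e^{(p-1)/p}$ together with Hölder in $L^p\times L^{p'}$ and in the edge sum gives the product
\[
\Bigl(\sum_e h_e^{1-p}\|[\nabla w]\|^p_{L^p(e\cap\overline{B_R})}\Bigr)^{1/p}\Bigl(\sum_e h_e\|\{v_h\}\|^{p'}_{L^{p'}(e\cap\overline{B_R})}\Bigr)^{1/p'},
\]
and the second factor is controlled by $\|v_h\|_{L^{p'}(B_R)}$ via Lemma~\ref{lem2:2} since $v_h\in V_h(B_R)$.

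The symmetrization term $-\e\sum_e\int_e\{A\nabla v_h\cdot\nu_e\}[w]$ is handled with the splitting $1=(\gamma_e h_e^{(1-2p)/p})(\gamma_e^{-1}h_e^{(2p-1)/p})$; using the identity $(2p-1)p'/p=p'+1$ (a consequence of $pp'=p+p'$) the companion factor for $v_h$ reads $\bigl(\sum_e h_e^{1+p'}\|\{\nabla v_h\}\|^{p'}_{L^{p'}(e)}\bigr)^{1/p'}$, which an edge trace estimate followed by the inverse inequality $\|\nabla v_h\|_{L^{p'}(T)}\lss h_T^{-1}\|v_h\|_{L^{p'}(T)}$ reduces to $\|v_h\|_{L^{p'}(B_R)}$. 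Finally the penalty term $\sum_e(\gamma_e/h_e)\int_e[w][v_h]$ is treated with the splitting $\gamma_e/h_e=(\gamma_e h_e^{(1-2p)/p})(h_e^{1/p'})$ and a direct application of Lemma~\ref{lem2:2}. Summing the four estimates and dividing by $\|v_h\|_{L^{p'}(B_R)}$ then yields the desired bound.

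The only subtle point is the $h_e$-exponent bookkeeping in the symmetrization term, where both an edge trace inequality and an inverse estimate are required to absorb $\|\{\nabla v_h\}\|_{L^{p'}(e)}$ into $\|v_h\|_{L^{p'}(B_R)}$; the remaining computations are routine consequences of Hölder's inequality and Lemma~\ref{lem2:2}.
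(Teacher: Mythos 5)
Your proposal is correct and follows essentially the same route as the paper: write out the four terms of $a_h^{\e}(w,v_h)$ from \eqref{eqn6} for $v_h\in V_h(B_R)$, apply H\"older's inequality with exactly the edge-weight splittings $h_e^{(1-p)/p}$, $\gamma_e h_e^{(1-2p)/p}$ used there, and absorb the $v_h$-factors via the trace estimate \eqref{eqn2:12} and an inverse inequality for the $\{\nabla v_h\}$ term. Your exponent bookkeeping (e.g.\ $(2p-1)p'/p=p'+1$) matches the factors $\bigl(\sum_e h_e^{p'}h_e\|\{\nabla_h v_h\dv\}\|_{L^{p'}(e_R)}^{p'}\bigr)^{1/p'}$ appearing in the paper's proof, so nothing further is needed.
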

\begin{proof}
Set $B_R:= B_{R}(x_0)$ and let  $v_h\in V_h(B_R)$. For $e\in\E$, set $e_R:= e\cap\overline{B}_R$.  
By the trace estimate \eqref{eqn2:12}), the definition of $\L_h^{\e}$, and an inverse inequality we have
\begin{align*}%\numberthis\label{eqn3:18}
\bigl(\L_h^{\e}w,v_h\bigr)
&=-\int_{\Omega\cap B_{R}}(A:D^2_hw)v_h \dx[x]
+\sum_{e\in\EI}\int_{e_R}[A\grad w\dv] \{v_h\} \dx[S] \\ 
&\quad -\e\sum_{e\in\E}\int_{e_R}\{A\grad v_h\dv\}[w]\dx[S] % - \e\sum_{e\in\EB}\int_{e_1}A\grad v_h\dv w\dx[S] \\
%
%&\quad 
+\sum_{e\in\E}\int_{e_R}\frac{\ge}{h_e}[w][v_h]\dx[S]\\ %+ \sum_{e\in\EB}\int_{e_1}\frac{\ge}{h_e}wv_h\dx[S]\\
&\lss \|D^2w\|_{L^p(\Omega\cap B_{R})}\|v_h\|_{L^{p'}(\Omega\cap B_{R})}\\
&\quad +\Bigl(\sum_{e\in\EI}h_e^{1-p}\|[\grad w]\|_{L^{p}(e_R)}^p \Bigr)^{\frac{1}{p}}
\Bigl(\sum_{e\in\EI}h_e\|\{v_h\}\|_{L^{p}(e_R)}^{p'} \Bigr)^{\frac{1}{p'}} \\ 
&\quad +\Bigl(\sum_{e\in\E}\ge^p h_e^{1-2p}\|[w]\|_{L^{p}(e_R)}^p  \Bigr)^{\frac{1}{p}}
\Bigl(\sum_{e\in\E}h_e^{p'}h_e\|\{\grad_h v_h\dv\}\|_{L^{p'}(e_R)}^{p'} \Bigr)^{\frac{1}{p'}} \\
%
%&\quad +\Bigl(\sum_{e\in\EB}\ge^p h_e^{1-2p}\|w\|_{L^{p}(e_R)}^p  \Bigr)^{\frac{1}{p}}
%\Bigl(\sum_{e\in\EB}h_e^{p'}h_e\|\grad_h v_h\dv\|_{L^{p'}(e_R)}^{p'} \Bigr)^{\frac{1}{p'}} \\
%
&\quad +\Bigl(\sum_{e\in\E}\ge^p h_e^{1-2p}\|[w]\|_{L^{p}(e_R)}^p  \Bigr)^{\frac{1}{p}}
\Bigl(\sum_{e\in\E}h_e\|[v_h]\|_{L^{p'}(e_R)}^{p'} \Bigr)^{\frac{1}{p'}} \\
%
%&\quad + \Bigl(\sum_{e\in\EB}\ge^p h_e^{1-2p}\|w\|_{L^{p}(e_R)}^p  \Bigr)^{\frac{1}{p}}
%\Bigl(\sum_{e\in\EB}h_e\|v_h\|_{L^{p'}(e_R)}^{p'} \Bigr)^{\frac{1}{p'}} \\
%
%
&\lss \|w\|_{\Wh(B_{R})}\|v_h\|_{L^{p'}(B_{R})}.
\end{align*}
Dividing both sides by $\|v_h\|_{L^{p'}(B_{R})}$ yields the desired estimate.  
\end{proof}

Our last lemma establishes a left-side inf-sup condition for $\L_h^{\e}$.  This estimate relies on the 
formal adjoint operator $\aL:=(\L_h^{\e})^*$ and some techniques from \cite{MC:SW}.  
%%%%%%%%%%%%%%%%%
%%% lem3:5 %%%%%%%%%%
%%%%%%%%%%%%%%%%%
\begin{lemma} \label{lem3:5} %
There exists an $h_0>0$ such that for all $h\leq h_0 $ and $k\geq 2$ we have
\begin{align}\label{eqn3:30}
\|v_h\|_{L^{p'}(\W)}\lss \sup_{0\neq w_h\in V_h}\frac{(\L_h^{\e} w_h,v_h)}{\|w_h\|_{\Wh(\W)}} \qquad\forall 
v_h\in V_h,
\end{align}
where $1<p<\infty$ if $\e=1$ and $p=2$ if $\e \in \{0,-1\}$.
\end{lemma}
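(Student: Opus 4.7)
The plan is to implement the ``freezing coefficients plus covering'' strategy at the discrete level, as outlined in the introduction. Observe that since $(\L_h^{\e}w_h,v_h) = a_h^\e(w_h,v_h) = (\aL v_h,w_h)$, proving \eqref{eqn3:30} is equivalent to establishing a discrete Calderon-Zygmund-type bound for the formal adjoint $\aL$ of the form
$$\|v_h\|_{L^{p'}(\W)} \lss \sup_{0\neq w_h\in V_h}\frac{|a_h^\e(w_h,v_h)|}{\|w_h\|_{\Wh(\W)}} \qquad \forall v_h\in V_h.$$
I would prove this by a local-then-global argument followed by removal of a compact perturbation.

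First I would derive a \emph{local} inf-sup. For each $x_0\in\overline{\W}$ set $A_0:=A(x_0)$. Lemma \ref{lem3:1} shows that on the ball $B_{R_\d}(x_0)$ the operator $\L_h^\e$ differs from its constant-coefficient counterpart $\L_{0,h}^\e$ by at most $\d$ in the relevant operator norm, while Theorem \ref{lem2:7} supplies a discrete Calderon-Zygmund estimate for $\L_{0,h}^\e$. For $w_h\in V_h(B_{R_\d/2}(x_0))$ I would write $\L_h^\e w_h = \L_{0,h}^\e w_h + (\L_h^\e-\L_{0,h}^\e)w_h$, apply Theorem \ref{lem2:7} to the first term, and choose $\d$ small enough to dominate the factor $|\log h|^t$ so that the perturbation from the second term is absorbed. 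This yields the local stability
$$\|w_h\|_{\Wh(B_{R_\d/2}(x_0))} \lss \|\L_h^\e w_h\|_{L_h^p(B_{R_\d}(x_0))}\qquad\forall w_h\in V_h(B_{R_\d/2}(x_0)),$$
and by transposition in the finite-dimensional space $V_h(B_{R_\d/2}(x_0))$, the dual local left-side inf-sup for $v_h$ supported in the ball.

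Second, I would globalize via covering. Cover $\overline{\W}$ by finitely many such balls with a subordinate smooth partition of unity $\{\eta_i\}$ of scale $d\approx R_\d$, chosen independently of $h$. For general $v_h\in V_h$ decompose $v_h = \sum_i \tI(\eta_i v_h) + \sum_i (\eta_i v_h-\tI(\eta_i v_h))$, apply the dualized local inf-sup to each $\tI(\eta_i v_h)$, and use the super-approximation estimates \eqref{eqn2:37}--\eqref{eqn2:43} of Lemma \ref{lem2:4} to bound both the second sum and the commutators $[\aL,\tI(\eta_i\cdot)]$ in lower-order norms. Summing over $i$ and invoking Lemma \ref{lem2:5} together with Young's inequality to absorb the resulting $W_h^{1,p'}$-term produces the G\"arding-type estimate
$$\|v_h\|_{W_h^{2,p'}(\W)} \lss \sup_{0\neq w_h\in V_h}\frac{|a_h^\e(w_h,v_h)|}{\|w_h\|_{\Wh(\W)}} + \|v_h\|_{L^{p'}(\W)}.$$
The hypothesis $k\ge 2$ is required at exactly this step, since the key super-approximation bound \eqref{eqn2:43} that controls the commutator in the $\Wh$-norm is valid only for $k\ge 2$.

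Finally, I would remove the compact term $\|v_h\|_{L^{p'}}$ by a standard duality/contradiction argument: test $v_h$ against the continuous adjoint solution $\phi$ of $\L^*\phi = |v_h|^{p'-2}v_h$, use the Calderon-Zygmund regularity \eqref{eqn3:02} of $\phi$ together with a consistency/approximation estimate for the DG approximation of $\phi$, and invoke the injectivity of the continuous adjoint operator. This delivers \eqref{eqn3:30}. The main obstacle will be the covering step: tracking the commutator terms precisely enough that Young's inequality truly absorbs the $W_h^{1,p'}$-contribution, and ensuring that the covering radius $R_\d$ (and therefore the number of patches) stays independent of $h$ as $h\to 0$, so that no uncontrolled constants from the partition of unity leak into the final estimate.
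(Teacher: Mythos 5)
Your overall architecture (local freezing of coefficients, a covering argument, a G\"arding-type inequality, then removal of a compact remainder by duality) matches the paper's, but two of your concrete steps would fail. First, the G\"arding inequality you display cannot be correct: by Lemma \ref{lem3:3} (applied with $R={\rm diam}(\W)$) the right-hand side already satisfies $\sup_{w_h}|a_h^\e(w_h,v_h)|/\|w_h\|_{\Wh(\W)}\lss\|v_h\|_{L^{p'}(\W)}$, so your estimate would assert $\|v_h\|_{W_h^{2,p'}(\W)}\lss\|v_h\|_{L^{p'}(\W)}$ for all $v_h\in V_h$ with an $h$-independent constant, contradicting the sharpness of the inverse inequality. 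The correct intermediate estimate keeps $\|v_h\|_{L^{p'}(\W)}$ on the \emph{left} and produces a genuinely lower-order remainder on the right, namely $\|v_h\|_{L^{p'}(\W)}\lss\|\aL v_h\|_{W_h^{-2,p'}(\W)}+\|v_h\|_{W_h^{-1,p'}(\W)}$; the covering argument does generate a term $\|v_h\|_{L^{p'}(\W)}$, but only with the small factor $\d_0$, so it is absorbed at once, whereas a unit-strength $L^{p'}$ remainder as in your plan could never be removed afterwards. Second, your final duality step solves the \emph{adjoint} problem $\L^*\phi=|v_h|^{p'-2}v_h$ and invokes Calderon--Zygmund regularity for $\phi$; but \eqref{eqn3:02} concerns the primal operator $\L$, and the formal adjoint $\L^*v=-D^2:(Av)$ with $A$ merely continuous has no $W^{2,p'}$ solvability theory. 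Every duality argument must therefore be routed through the primal operator: locally one solves $\L\phi=v_h|v_h|^{p'-2}$ and approximates $\phi$ by the constant-coefficient DG solution (this is how the local dual estimate $\|v_h\|_{L^{p'}(B_1)}\lss\|\aL v_h\|_{W_h^{-2,p}(B_1)}$ is actually obtained, rather than by your ``transposition,'' which is not automatic since your primal inf-sup has trial space $V_h(B_{R_\d/2})$ but test space the larger $V_h(B_{R_\d})$); and the remainder $\|v_h\|_{W_h^{-1,p'}(\W)}$ is controlled by testing against $\phi_g=\L^{-1}g$ for $g$ in the unit ball of $W_h^{1,p}(\W)$, using precompactness of this solution set in $W^{2,p}(\Th)$ and the Schatz--Wang approximation lemma to gain a small factor $\tau$.

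A smaller but still substantive point: your claim that one ``chooses $\d$ small enough to dominate the factor $|\log h|^t$'' is backwards, since $\d$ must be fixed independently of $h$ (it determines the covering radius $R_\d$) while $|\log h|^t\to\infty$ when $t>0$. The perturbation argument closes only because $t=0$ under the standing hypothesis $k\ge 2$ (and $p=2$ for $\e\in\{0,-1\}$); this, together with the super-approximation bound \eqref{eqn2:43} and the $k\ge2$ requirement in the Schatz--Wang approximation result used in the final step, is where the hypothesis $k\ge 2$ actually enters.
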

\begin{proof}
Note that \eqref{eqn3:30} is equivalent to
\begin{align}\label{eqn3:30.5}
\|v_h\|_{L^{p'}(\W)} &\lss \sup_{0\neq w_h\in V_h}\frac{(\L_h^{\e} w_h,v_h)}{\|w_h\|_{\Wh(\W)}} 
= \sup_{0\neq w_h\in V_h}\frac{(\aL v_h,w_h)}{\|w_h\|_{\Wh(\W)}} = \|\aL v_h\|_{W_h^{-2,p'}(\W)}
\end{align}
for all $v_h\in V_h$.  We divide the remaining proof into three steps.

\medskip
{\em Step 1: Local estimates.}
Let $x_0\in\W,\ A_0 \equiv A(x_0)$, $\d_0$, $h_{\delta_0}$, $R_{\d_0}$, $R_1:=(1/3)R_{\d_0}$, and $B_1:=B_{R_1}(x_0)$ be 
as in Lemma \ref{lem3:1} with $\delta_0>0$ to be determined, and set $h\le h_{\delta_0}$.

By the elliptic regularity of $\L$, 
for any $v_h\in V_h(B_1)$, there exists $\phi\in W^{2,p}(\W)\cap W_0^{1,p}(\W)$ such that $\L\phi = v_h|v_h|^{p-2}$ in $\W$ and satisfies the estimate
\begin{align}\label{eqn3:32}
\|\phi\|_{W^{2,p}(\W)} \lss \|v_h\|_{L^{p'}(\W)}^{p'-1} = \|v_h\|_{L^{p'}(B_1)}^{p'-1}.
\end{align}
Since $\L_h^{\e}$ is consistent with $\L$ for any $\phi_h\in V_h$ we have
\begin{align}\label{eqn3:33}
\|v_h\|_{L^{p'}(B_1)} &= \|v_h\|_{L^{p'}(\W)} = (\L\phi,v_h) = (\L_h^{\e}\phi,v_h) \\ 
&= (\L_h^{\e}\phi_h,v_h) + \big(\L_h^{\e}(\phi-\phi_h),v_h\big) \nonumber \\
&= (\aL v_h,\phi_h) + \big(\L_{0,h}^{\e}(\phi-\phi_h),v_h\big) + \big((\L_h^{\e}-\L_{0,h}^{\e})(\phi-\phi_h),v_h\big). \nonumber
\end{align}
From the existence-uniqueness of the IP-DG scheme \eqref{eqn2:7}, there exists $\phi_h\in V_h$ such that 
\begin{align*}
\big(\L_{0,h}^{\e}(\phi-\phi_h),w_h\big) = 0 \quad\forall w_h\in V_h.
\end{align*}
Combining Galerkin orthogonality, Theorem \ref{lem2:7}, and \eqref{eqn3:32} gives us the solution estimate
\begin{align}\label{eqn3:34}
\|\phi_h\|_{\Wh(\W)} \lss \|\L_{0,h}^{\e}\phi_h\|_{L_h^p(\W)} = \|\L_{0,h}^{\e}\phi\|_{L_h^p(\W)} \lss \|\phi \|_{W^{2,p}(\W)} \lss \|v_h\|_{L^{p'}(B_1)}^{p'-1}.
\end{align}
Using Lemma \ref{lem3:1} and \eqref{eqn3:32}--\eqref{eqn3:34} we have
\begin{align*}
\|v_h\|_{L^{p'}(B_1)}^{p'} &= (\aL v_h,\phi_h) + ((\L_h^{\e}-\L_{0,h}^{\e})(\phi-\phi_h),v_h) \\
&\leq \|\aL v_h\|_{W_h^{-2,p'}(\W)}\|\phi_h\|_{\Wh(\W)}+ \|(\L_h^{\e}-\L_{0,h}^{\e})(\phi-\phi_h)\|_{L_h^p(B_1)}\|v_h\|_{L^{p'}(B_1)} \\
&\lss \|\aL v_h\|_{W_h^{-2,p'}(\W)}\|v_h\|_{L^{p'}(B_1)}^{p'-1}+ \d_0\|\phi-\phi_h\|_{\Wh(B_1)}\|v_h\|_{L^{p'}(B_1)} \\
&\lss \|\aL v_h\|_{W_h^{-2,p'}(\W)}\|v_h\|_{L^{p'}(B_1)}^{p'-1}+ \d_0\|v_h\|_{L^{p'}(B_1)}^{p'}.
\end{align*}
Taking $\d_0$ sufficiently small to move the right hand term to the left side and dividing by $\|v_h\|_{L^{p'}(B_1)}^{p'-1}$ gives us 
the  local estimate
\begin{align}\label{eqn3:35}
\|v_h\|_{L^{p'}(B_1)}\lss \|\aL v_h\|_{W_h^{-2,p}(B_1)}\qquad \forall v_h\in V_h(B_1).
\end{align}

\medskip
{\em Step 2: A G\"{a}rding type inequality {by a covering argument}.}
Given $R_1$ from Step 1, let $R_2 = 2R_1$ and $R_3=3R_1$.  Let $\eta\in C^3(\W)$ 
be a cutoff function satisfying
\begin{align} \label{eqn3:20}
0\leq \eta\leq 1, \quad{\eta\big|_{B_1}=1}, \quad\eta\big|_{\W\setminus B_2}=0, \quad|\eta|_{W^{m,\infty}(\W)}=O(R_1^{-m}).
\end{align}
For any $v_h\in V_h$, we have by \eqref{eqn3:35},
\begin{align}\label{eqn3:36}
&\|v_h\|_{L^{p'}(B_1)} 
= \|\eta v_h\|_{L^{p'}(B_1)} \leq \|\eta v_h-\tI(\eta v_h)\|_{L^{p'}(B_1)}+ \|\tI(\eta v_h)\|_{L^{p'}(B_1)}\\
&\lss \|\eta v_h-\tI(\eta v_h)\|_{L^{p'}(B_1)}+ \|\aL(\tI(\eta v_h))\|_{W_h^{-2,p'}(B_1)}\nonumber\\
&\lss \|\eta v_h-\tI(\eta v_h)\|_{L^{p'}(B_1)}+ \|\aL(\tI(\eta v_h)-\eta v_h)\|_{W_h^{-2,p'}(B_1)} + \|\aL(\eta v_h)\|_{W_h^{-2,p'}(B_1)}.\nonumber
\end{align}
We now bound the second term on the right hand side of 
\eqref{eqn3:36}. By the definition of $\|\cdot\|_{W_h^{-2,p}}$, 
Lemma \ref{lem3:3} and \eqref{norm_equiv}, for any $w_h\in V_h$ we have
\begin{align*}
\|\aL(&\tI(\eta v_h)-\eta v_h)\|_{W_h^{-2,p'}(B_1)} 
= \sup_{0\neq w_h\in V_h}\frac{(\aL(\tI(\eta v_h)-\eta v_h),w_h)}{\|w_h\|_{\Wh(B_1)}} \\
&\leq \sup_{w_h\in V_h}\frac{(\L_h^{\e} w_h,\tI(\eta v_h)-\eta v_h)}{\|w_h\|_{\Wh(B_1)}}\lss \sup_{w_h\in V_h}\frac{\|\L_h^{\e} w_h\|_{L_h^p(B_1)}\|\tI(\eta v_h)-\eta v_h)\|_{L^{p'}(B_1)}}{\|w_h\|_{\Wh(B_1)}}\nonumber \\
&\lss\sup_{w_h\in V_h}\frac{\|w_h\|_{\Wh(B_1)}\|\tI(\eta v_h)-\eta v_h)\|_{L^{p'}(B_1)}}{\|w_h\|_{\Wh(B_1)}} = \|\tI(\eta v_h)-\eta v_h\|_{L^{p'}(B_1)}. \nonumber
\end{align*}
Thus \eqref{eqn3:36} becomes
\begin{align}\label{eqn3:37}
\|v_h\|_{L^{p'}(B_1)}\lss \|\eta v_h-\tI(\eta v_h)\|_{L^{p'}(B_1)}+ \|\aL(\eta v_h)\|_{W_h^{-2,p'}(B_1)}.
\end{align}
Using Lemmas \ref{lem2:5.5}, \ref{lem2:4}, and \ref{lem3:3} with \eqref{eqn3:37} yields
\begin{align}\label{eqn3:38}
\|v_h\|_{L^{p'}(B_1)}&\lss \frac{h}{R_1}\|v_h\|_{L^{p'}(B_3)}+ \|\aL(\eta v_h)\|_{W_h^{-2,p'}(B_3)} \\
&\lss \frac{1}{R_1}\|v_h\|_{W^{-1,p'}(B_3)}+ \|\aL(\eta v_h)\|_{W_h^{-2,p'}(B_3)}. \nonumber
\end{align}
We now {want} to remove the cutoff function $\eta$ from the adjoint operator
appearing in the right-hand side of \eqref{eqn3:38}.  For $w_h\in V_h(B_3)$,
we break up $\aL( \eta v_h)$ as follows:
\begin{align}\label{eqn3:39}
(\aL&( \eta v_h),w_h) = (\L_h^{\e} w_h,\eta v_h) = (\L_h^{\e} w_h\eta,v_h)+\bigg[(\L_h^{\e} w_h,\eta v_h)-(\L_h^{\e} w_h\eta, v_h)\bigg] \\
&= (\L_h^{\e} (\tI(w_h\eta)),v_h)+(\L_h^{\e} (w_h\eta-\tI(w_h\eta)),v_h) \nonumber \\
&\quad+\bigg[(\L_h^{\e} w_h,\eta v_h)-(\L_h^{\e} w_h\eta, v_h)\bigg]\nonumber \\
&=: I_1 + I_2 + I_3 \nonumber.
\end{align}%\label{eqn3:40}

We {then}  seek to bound each $I$ in order. To bound $I_1$, we will use the definition of 
$\|\cdot\|_{W_h^{-2,p}}$, the stability of $\tI$, and Lemma \ref{lem2:55} to obtain
\begin{align}\label{eqn3:41}
I_1 &= (\aL v_h, \tI(w_h\eta))  
 \lss \|\aL v_h\|_{W_h^{-2,p'}(B_3)}\|\tI(\eta w_h)\|_{\Wh(B_3)} \\
&\lss \|\aL v_h\|_{W_h^{-2,p'}(B_3)}\|\eta w_h\|_{\Wh(B_3)}
\lss \frac{1}{R_1^2}\|\aL v_h\|_{W_h^{-2,p'}(B_3)}\|w_h\|_{\Wh(B_3)}. \nonumber
\end{align}
For $I_2$ we use Lemmas \ref{lem2:5.5}, \ref{lem2:4}, \ref{lem3:3} to get
\begin{align}\label{eqn3:42}
I_2 &= (\L_h^{\e} (w_h\eta-\tI(w_h\eta)),v_h)
\lss \| w_h\eta-\tI(w_h\eta) \|_{\Wh(B_3)}\|v_h\|_{L^{p'}(B_3)} \\
&\lss \frac{h}{R_1^3}\| w_h \|_{\Wh(B_3)}\|v_h\|_{L^{p'}(B_3)} \lss \frac{1}{R_1^3}\| w_h \|_{\Wh(B_3)}\|v_h\|_{W^{-1,p'}(B_3)}.  \nonumber
\end{align} 
To bound $I_3$ we  introduce the operator $\L_{0,h}^{\e}$ .  
For $e\in \E$ let $e_3:= e\cap B_3$, and define $\tilde{A} := A-A_0$.
We then write
\begin{align}\label{eqn3:43}
I_3 &= (\L_h^{\e} w_h,\eta v_h)-(\L_h^{\e} w_h\eta, v_h) \\ 
& =  (\L_{0,h}^{\e} w_h,\eta v_h)-(\L_{0,h}^{\e} w_h\eta, v_h) + \nonumber\\
&\quad\quad+ \big[ (\L_h^{\e} w_h,\eta v_h)-(\L_h^{\e} w_h\eta, v_h) - (\L_{0,h}^{\e} w_h,\eta v_h)+(\L_{0,h}^{\e} w_h\eta, v_h)\big] \nonumber \\
&= -\int_{B_3} \left(w_h A_0:D^2\eta + (A_0+A_0^T)\grad\eta\cdot\grad_h w_h\right) v_h \dx[x]  \nonumber \\
&\quad -\e\sum_{e\in \E} \int_{e_3} (A_0 \grad \eta  \dv)\{v_h\} [w_h]\dx[S]\nonumber\\ %-\e \sum_{e\in\EB}\int_{e_3}(A_0\grad\eta\dv)w_hv_h\dx[S]\nonumber \\
&\quad -\int_{B_3} \left(w_h (\tilde{A}:D^2\eta + \left(\tilde{A}+\tilde{A}^T\right)\grad\eta\cdot\grad_h w_h\right) v_h \dx[x]\nonumber\\
&\quad -\e \sum_{e\in \E} \int_{e_3} (\tilde{A} \grad \eta\dv)\{v_h\}[w_h]\dx[S]\nonumber%-\e \sum_{e\in\EB}\int_{e_3}(\tilde{A}\grad\eta\dv)w_hv_h\dx[S] \nonumber \\
=: K_1 + K_2 + K_3 + K_4. \nonumber
\end{align} 
We now must bound each $K_i$. To bound $K_1$ we use the definition of $\|\cdot\|_{W_h^{-1,p}(B_3)}$ and Lemma \ref{lem2:5.5} to get
\begin{align}\label{eqn3:44}
K_1 &\lss \left(\|w_h A_0:D^2\eta\|_{W_h^{1,p}(B_3)}+ \|(A_0+A_0^T)\grad\eta\cdot\grad_h w_h\|_{W_h^{1,p}(B_3)}\right)\|v_h\|_{W_h^{-1,p'}(B_3)} \\
&\lss \frac{1}{R_1^3}\|w_h\|_{\Wh(B_3)}\|v_h\|_{W_h^{-1,p'}(B_3)}. \nonumber
\end{align}
The bound of $K_2$ uses Lemmas \ref{lem2:2}, \ref{lem2:55} to obtain
\begin{align}\label{eqn3:45}
K_2 &\lss \frac{1}{R_1}\left(\sum_{e\in\E}h_e^{1-2p}\|[w_h]\|_{L^p(e_3)}^p\right)^{\frac{1}{p}}\left(\sum_{e\in\E}h_eh_e^{p'}\|\{v_h\}\|_{L^{p'}(e_3)}^{p'}\right)^{\frac{1}{p'}}\\
%&\quad +\frac{1}{R_1}\left(\sum_{e\in\EB}h_e^{1-2p}\|w_h\|_{L^p(e_3)}^p\right)^{\frac{1}{p}}\left(\sum_{e\in\EB}h_eh_e^{p'}\|v_h\|_{L^{p'}(e_3)}^{p'}\right)^{\frac{1}{p'}}\nonumber\\
&\lss \frac{1}{R_1}\|w_h\|_{\Wh(B_3)}\left(h\|v_h\|_{L^{p'}(B_3)}\right) \nonumber \\
&\lss \frac{1}{R_1}\|w_h\|_{\Wh(B_3)}\|v_h\|_{W_h^{-1,p'}(B_3)}. \nonumber 
\end{align}
We  use similar techniques as \eqref{eqn3:44}, \eqref{eqn3:45} and the fact that $\|\tilde{A}\|_{L^\infty(B_3)} \leq \d_0$ to get
\begin{align}\label{eqn3:46}
K_3 &\lss \left(\|w_h \tilde{A}:D^2\eta\|_{L^p(B_3)}+ \|(\tilde{A}+\tilde{A}^T)\grad\eta\cdot\grad_h w_h\|_{L^p(B_3)}\right)\|v_h\|_{L^{p'}(B_3)} \\
&\lss \d_0\left(\frac{1}{R_1^2}\|w_h\|_{L^p(B_3)}+ \frac{1}{R_1}\|w_h\|_{W^{1,p}_h(B_3)}\right)\|v_h\|_{L^{p'}(B_3)} \nonumber \\
&\lss \delta_0\|w_h\|_{\Wh(B_3)}\|v_h\|_{L^{p'}(B_3)}, \nonumber
\end{align}
where we have used Lemma \ref{lem2:55} to derive the last inequality.
Likewise, we find
\begin{align}\label{eqn3:47}
K_4 &\lss \frac{1}{R_1}\left(\sum_{e\in\E}h_e^{1-2p}\|[w_h]\|_{L^p(e_3)}^p\right)^{\frac{1}{p}}\left(\sum_{e\in\E}h_eh_e^{p'}\|\{v_h\}\|_{L^{p'}(e_3)}^{p'}\right)^{\frac{1}{p'}}\\
%&\quad +\frac{1}{R_1}\left(\sum_{e\in\EB}h_e^{1-2p}\|w_h\|_{L^p(e_3)}^p\right)^{\frac{1}{p}}\left(\sum_{e\in\EB}h_eh_e^{p'}\|v_h\|_{L^{p'}(e_3)}^{p'}\right)^{\frac{1}{p'}} \nonumber\\
&\lss \frac{\delta_0}{R_1}\|w_h\|_{\Wh(B_3)}\left(h\|v_h\|_{L^{p'}(B_3)}\right) \nonumber \\
&\lss \d_0\|w_h\|_{\Wh(B_3)}\|v_h\|_{L^{p'}(B_3)} \nonumber,
\end{align}
where we have used the inequality $h\le R_1$.
Combining \eqref{eqn3:43}-\eqref{eqn3:47} we get
\begin{align}\label{eqn3:48}
I_3 \lss \frac{1}{R_1^3}\|w_h\|_{\Wh(B_3)}\|v_h\|_{W_h^{-1,p'}(B_3)} +  \d_0\|w_h\|_{\Wh(B_3)}\|v_h\|_{L^{p'}(B_3)},
\end{align}
and bringing together \eqref{eqn3:39}-\eqref{eqn3:42}, and \eqref{eqn3:48} gives us
\begin{align}\label{eqn3:49}
(\aL(\eta v_h),w_h) &\lss \frac{1}{R_1^3}\left(\|\aL v_h\|_{W_h^{-2,p'}(B_3)}+ \|v_h\|_{W_h^{-1,p'}(B_3)}  \right)\|w_h\|_{\Wh(B_3)} \\
&\quad\quad + \d_0\|w_h\|_{\Wh(B_3)}\|v_h\|_{L^{p'}(B_3)}.
\end{align}
By the definition of $\|\cdot\|_{W_h^{-2,p'}(B_3)}$ and \eqref{eqn3:49} we get
\begin{align}\label{eqn3:50}
\|\aL(\eta w_h)\|_{W_h^{-2,p'}(B_3)} &\lss \frac{1}{R_1^3}\left(\|\aL v_h\|_{W_h^{-2,p'}(B_3)}+ \|v_h\|_{W_h^{-1,p'}(B_3)}  \right) + \d_0\|v_h\|_{L^{p'}(B_3)}.
\end{align}
Using \eqref{eqn3:38} and \eqref{eqn3:50} gives us
\begin{align*}
\|v_h\|_{L^{p'}(B_1)}\lss \frac{1}{R_1^3}\left(\|\aL v_h\|_{W_h^{-2,p'}(B_3)}+ \|v_h\|_{W_h^{-1,p'}(B_3)}  \right) + \d_0\|v_h\|_{L^{p'}(B_3)}.
\end{align*}
{Since $\overline{\W}$ is compact, employing a covering argument (cf. \cite{AX:FN,Sp:DT}) then yields}
\begin{align*}
\|v_h\|_{L^{p'}(\W)}\lss \|\aL v_h\|_{W_h^{-2,p'}(\W)}+ \|v_h\|_{W_h^{-1,p'}(\W)} + \d_0\|v_h\|_{L^{p'}(\W)}.
\end{align*}
{Because $\d_0$ is small, we can absorb the last term on the right-hand side to the left-hand side
to arrive at the global estimate}
\begin{align}\label{eqn3:51}
\|v_h\|_{L^{p'}(\W)}\lss\|\aL v_h\|_{W_h^{-2,p'}(\W)}+ \|v_h\|_{W_h^{-1,p'}(\W)},
\end{align}
which is a G\"arding-type inequality.

\medskip
{\em Step 3: Duality argument on the adjoint operator.}
To control the last term in \eqref{eqn3:51} we now use a duality argument for $\aL$.  
This argument uses the regularity estimate of the original problem $\L$. 

Define the set
\begin{align*}
{X = \{g\in W^{1,p}_h(\W);\, \|g\|_{W_h^{1,p}(\W)} = 1\}.}  
\end{align*}
By the {discrete Poincar\'{e} inequality, with constant $C=C(p,\W)$, we have for all $g\in X$
\begin{align*}
\|g\|_{L^p(\W)} %\leq C\|\grad g\|_{L^p(\W)} 
\leq   C\|g\|_{W_h^{1,p}(\W)} < \infty,
\end{align*}
{since $X$ is bounded in $W^{1,p}_h(\W)$}. Thus, $X$ is precompact} in $L^{p}(\W)$ by Sobolev embedding.  
Next we define the set
\begin{align*}
W = \{\phi := \L^{-1}g;\,g\in X\}.  
\end{align*}
Note that $\L^{-1}:L^p(\W)\to W^{2,p}(\W)\cap W_0^{1,p}(\W)\subset W^{2,p}(\Th)$ is well defined by 
well-posedness of the PDE.  Also since $\L^{-1}$ is linear and satisfies the estimate 
\begin{align*}
\|\L^{-1} g\|_{W^{2,p}_h(\W)}=\|\phi\|_{\Wh(\W)} \leq \|\phi\|_{W^{2,p}(\W)} \lss \|g\|_{L^p(\W)},
\end{align*}
it is bounded in $W^{2,p}(\Th)$.  Thus $W$ is precompact in $W^{2,p}(\Th)$.  From \cite[Lemma 5]{MC:SW}, 
for every $\tau>0$ there exists $h_*>0$ that only 
depends on $\tau$ and $\overline{W}$ such that for each $\phi\in W$ and $0<h\leq h_*$ 
there is a $\phi_h\in V_h$ such that if $k\geq 2$ we have
\begin{align}\label{eqn3:52}
\|\phi-\phi_h\|_{\Wh(\W)} \leq \tau.
\end{align}
Note by the reverse triangle inequality and \eqref{eqn3:52} we have
\begin{align*}
\|\phi_h\|_{\Wh(\W)}\leq \|\phi\|_{\Wh(\W)} \lss \|g\|_{L^p(\W)} \leq C
\end{align*}
and hence
\begin{align*}
\{ \phi_h\in V_h;\, |\phi_h - \phi|\leq \tau\}
\end{align*}
is uniformly bounded in $\phi$ and $h$. Let $g\in X$ and choose $\phi_g=\L^{-1}g\in W$ 
which tells us that $\L\phi_g=g$.  Let $v_h\in V_h$ and $\phi_h\in V_h$.  By Lemma \ref{lem3:3} 
and the definition of $\|\cdot\|_{W_h^{-2,p'}(\W)}$ we have
\begin{align*}
\int_{\W}v_h g\dx[x] &= (\L_h^{\e}\phi_g,v_h) = (\L_h^{\e}\phi_h,v_h) + (\L_h^{\e}(\phi_g-\phi_h),v_h) \\
&= (\aL v_h,\phi_h) + (\L_h^{\e}(\phi_g-\phi_h),v_h) \\
&\lss \|\aL v_h\|_{W_h^{-2,p'}(\W)}\|\phi_h\|_{\Wh(\W)} + \|\phi_g-\phi_h\|_{\Wh(\W)}\|v_h\|_{L^{p'}(\W)}.
\end{align*}
Selecting $\phi_h$ to satisfy \eqref{eqn3:52} and %using the definition of $\|\cdot\|_{W_h^{-1,p'}(\W)}$ gives us
{taking the supremum on $g$ gives us}
\begin{align}\label{eqn3:53}
\|v_h\|_{W^{-1,p}(\W)} \lss \|\aL v_h\|_{W_h^{-2,p'}(\W)}\|\phi_h\|_{\Wh(\W)} + \tau\|v_h\|_{L^{p'}(\W)}.
\end{align}
Combining \eqref{eqn3:51} and \eqref{eqn3:53} yields
\begin{align}
\|v_h\|_{L^{p'}(\W)}\lss\|\aL v_h\|_{W_h^{-2,p'}(\W)}+ \tau\|v_h\|_{L^{p'}(\W)}.
\end{align}
By choosing $\tau$ sufficiently small to kick back the right-most term we have \eqref{eqn3:30.5}.
This completes the proof upon taking $h_0  = \min\{h_{\delta_0},h_*\}$.
\end{proof}

We are now ready to prove the global stability of the operator $\L_h^{\e}$.

%%%%%%%%%%%%%%%%%%%%%
%%% thm3:1 %%%%%%%%%%%%%%
%%%%%%%%%%%%%%%%%%%%%
\begin{theorem}\label{thm3:1}
Suppose that $h\leq h_0$ and $k\geq 2$.  Then there holds the following stability estimate:
\begin{align}\label{eqn3:54}
\|w_h\|_{\Wh(\W)}\lss \|\L_h^{\e} w_h\|_{L_h^p(\W)} \qquad \forall w_h\in V_h,
\end{align}
where $1<p<\infty$ if $\e = 1$, and $p=2$ if $\e\in \{0,-1\}$.
\end{theorem}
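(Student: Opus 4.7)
The plan is to derive \eqref{eqn3:54} as the primal counterpart of the dual (left-side) inf-sup condition established in Lemma \ref{lem3:5}, by a short finite-dimensional duality argument. The hypotheses $h\le h_0$ and $k\ge 2$, together with the case split between $\e=1$ (for $1<p<\infty$) and $\e\in\{0,-1\}$ (for $p=2$), are inherited directly from Lemma \ref{lem3:5}; no further restriction is needed.

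First, I would unpack both sides of \eqref{eqn3:54}. By the definition \eqref{L2norm} of the seminorm $\|\cdot\|_{L^p_h(\W)}$, together with the convention $V_h(\W)=V_h$, one has
\[
\|\L_h^\e w_h\|_{L^p_h(\W)} \;=\; \sup_{0\ne v_h\in V_h}\frac{(\L_h^\e w_h,v_h)}{\|v_h\|_{L^{p'}(\W)}},
\]
so the stability estimate \eqref{eqn3:54} is equivalent to the right-side inf-sup
\[
\|w_h\|_{W^{2,p}_h(\W)} \;\lesssim\; \sup_{0\ne v_h\in V_h}\frac{(\L_h^\e w_h,v_h)}{\|v_h\|_{L^{p'}(\W)}}\qquad\forall w_h\in V_h.
\]
On the other hand, rewriting $(\L_h^\e w_h,v_h)=(\aL v_h,w_h)$ shows that Lemma \ref{lem3:5} is precisely the dual (left-side) inf-sup
\[
\|v_h\|_{L^{p'}(\W)} \;\lesssim\; \sup_{0\ne w_h\in V_h}\frac{(\L_h^\e w_h,v_h)}{\|w_h\|_{W^{2,p}_h(\W)}}\qquad\forall v_h\in V_h.
\]

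To close the loop I would invoke the fact that, in finite dimensions, the left- and right-side inf-sup constants associated with the same bilinear form on $V_h\times V_h$ coincide. Concretely, regard the bilinear form as inducing a linear operator $T\colon (V_h,\|\cdot\|_{W^{2,p}_h(\W)})\to (V_h,\|\cdot\|_{L^{p'}(\W)})^*$; Lemma \ref{lem3:5} states that its transpose $T^t$ is bounded below with $h$-independent constant $c$, which forces $T^t$ (and hence $T$) to be bijective on the finite-dimensional space $V_h$. The standard identities $(T^{-1})^t=(T^t)^{-1}$ and $\|S\|=\|S^t\|$ for operator norms between normed spaces and their duals then give $\|T^{-1}\|=\|(T^t)^{-1}\|\le c^{-1}$, which is exactly the primal inf-sup displayed above, with the same $h$-independent constant. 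I do not anticipate any real obstacle at this stage: the entire analytical burden of the theorem has already been absorbed into Lemma \ref{lem3:5} through the freezing-coefficient, covering, and adjoint-duality arguments carried out there, and what remains here is purely linear-algebraic duality on $V_h$.
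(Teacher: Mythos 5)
Your proof is correct, and at bottom it is the same duality argument the paper uses: both derive \eqref{eqn3:54} by transposing the adjoint inf-sup condition of Lemma \ref{lem3:5}, and all of the real analysis is indeed already contained in that lemma. The only difference is in how the transposition is carried out. The paper does it by hand: for fixed $w_h$ it solves the discrete adjoint problem \eqref{eqn3:56}, whose right-hand side is the $p$-duality functional of the $\Wh$-norm at $w_h$ (so that taking $v_h=w_h$ produces $\|w_h\|_{\Wh(\W)}^p$ up to constants), applies Lemma \ref{lem3:5} to obtain $\|q_h\|_{L^{p'}(\W)}\lss \|w_h\|_{\Wh(\W)}^{p-1}$, and finishes with H\"older's inequality. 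You instead quote the abstract identities $(T^{-1})^t=(T^t)^{-1}$ and $\|S\|=\|S^t\|$ on the finite-dimensional space $V_h$; since the canonical embedding of a finite-dimensional normed space into its bidual is an isometric isomorphism, the left- and right-side inf-sup constants of the form $(w_h,v_h)\mapsto(\L_h^{\e}w_h,v_h)$ on $(V_h,\|\cdot\|_{\Wh(\W)})\times(V_h,\|\cdot\|_{L^{p'}(\W)})$ do coincide, and no $h$-dependence is introduced because the constant in Lemma \ref{lem3:5} is uniform for $h\le h_0$. Your route is shorter and makes explicit that nothing beyond Lemma \ref{lem3:5} is needed; the paper's route is the concrete instantiation of the same principle, exhibiting the norming dual element $q_h$ rather than appealing to operator transposition. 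Either version is a complete proof of Theorem \ref{thm3:1}.
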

\begin{proof}
Let $w_h\in V_h$ be fixed, and consider the auxiliary problem of finding $q_h\in V_h$ such that 
\begin{align}
\label{eqn3:56}
\bigl(v_h,\aL q_h\bigr)
&=\bigl(\L_h^{\e} v_h,q_h\bigr) = \int_{\W} |D^2_h w_h|^{p-2}D^2_h w_h:D^2v_h \dx[x]\\
&\qquad\nonumber
+\sum_{e\in \EI} h_e^{1-p} \int_e { |[\nab w_h]|^{p-2} [\nab w_h]\cdot [\nab v_h]}\, \dx[S]\\
&\qquad\qquad\nonumber
+\sum_{e\in \E} h_e^{1-2p} \int_e |[w_h]|^{p-2} [w_h] [v_h]\, \dx[S] 
%&\qquad\qquad\nonumber
%+\sum_{e\in \EB} h_e^{1-2p} \int_e |w_h|^{p-2} w_h v_h\, \dx[S]
\qquad \forall v_h\in V_h.
\end{align}
Since $V_h$ is finite dimensional and the operator is linear, the existence is equivalent to the uniqueness.  
To show the uniqueness, let $q_h^{(1)}$ and $q_h^{(2)}$ both solve \eqref{eqn3:56}. Then by Lemma \ref{lem3:5} we get
\begin{align*}
\|q_h^{(1)}-q_h^{(2)}\|_{L^{p'}(\W)} \lss \sup_{0\neq v_h\in V_h}\frac{(\L_h^{\e} v_h,q_h^{(1)}-q_h^{(2)})}{\|v_h\|_{W^{2,p}_h(\Omega)}}
%= \sup_{0\neq v_h\in V_h}\frac{(\L_h^{\e} v_h,q_h^{(1)}) - (\L_h^{\e} v_h,q_h^{(2)})}{\|v_h\|_{W^{2,p}_h(\W)}} = 0.
=0.
\end{align*}
Hence, \eqref{eqn3:56} has a unique solution $q_h\in V_h$. Also by Lemma  \ref{lem3:5} and H\"older's inequality,
\begin{align*}
\|q_h\|_{L^{p^\prime}(\Omega)} \lss \sup_{0\neq v_h\in V_h}\frac{(\L_h^{\e} v_h,q_h)}{\|v_h\|_{W^{2,p}_h(\W)}} \les \|w_h\|_{W^{2,p}_h(\Omega)}^{p-1}.
\end{align*}
Consequently, we find
\begin{align*}
\|w_h\|_{W^{2,p}_h(\Omega)}^p &\les (w_h,\mathcal{L}_h^* q_h) = (\mathcal{L}_h w_h,q_h)
\les \|\mathcal{L}_h w_h\|_{L^p_h(\Omega)} \|q_h\|_{L^{p'}(\Omega)} \\
&\les \|\mathcal{L}_h w_h\|_{L^p_h(\Omega)} \|w_h\|_{W^{2,p}_h(\Omega)}^{p-1}.
\end{align*}
Dividing by $\|w_h\|_{W^{2,p}_h(\Omega)}^{p-1}$ now yields the desired result.
\end{proof}

%%%%%%%%%%
\subsection{Well-posedness and error estimates}\label{section4.3}
The goals of this subsection are to establish the well-posedness for the IP-DG scheme \eqref{eqn:dis}
and to derive the optimal order error estimates in $W^{2,p}_h$-norm for the IP-DG solutions.

\begin{theorem}
Under the assumptions of Lemma \ref{lem3:5}, the IP-DG scheme \eqref{eqn:dis} has a unique solution $u_h\in V_h$ 
{such that}
\begin{align}\label{eqn3:55}
\|u_h\|_{\Wh(\W)}\lss \|f\|_{L^p(\W)}.
\end{align}
\end{theorem}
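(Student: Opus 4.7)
The plan is to derive this theorem as an almost immediate corollary of the discrete Calderon--Zygmund estimate established in Theorem \ref{thm3:1}, together with the fact that $V_h$ is finite dimensional. First I would observe that \eqref{eqn:dis} is a square linear system on $V_h$, so existence and uniqueness are equivalent, and it suffices to verify uniqueness and then read off the a priori bound from Theorem \ref{thm3:1}.

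For uniqueness, suppose $u_h\in V_h$ satisfies $a_h^\e(u_h,v_h)=0$ for all $v_h\in V_h$. By the definition of $\L_h^\e$, this means $\L_h^\e u_h=0$ in $V_h$. Applying Theorem \ref{thm3:1} gives
\begin{align*}
\|u_h\|_{\Wh(\W)}\lss \|\L_h^\e u_h\|_{L_h^p(\W)}=0,
\end{align*}
so $u_h\equiv 0$. This yields existence of a unique $u_h\in V_h$ solving \eqref{eqn:dis}.

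For the stability bound, I would apply Theorem \ref{thm3:1} to the solution $u_h$ of \eqref{eqn:dis} to get $\|u_h\|_{\Wh(\W)}\lss \|\L_h^\e u_h\|_{L_h^p(\W)}$, and then estimate the right-hand side via the definition \eqref{L2norm} of the discrete semi-norm. Since $(\L_h^\e u_h,v_h) = a_h^\e(u_h,v_h) = (f,v_h)$ for every $v_h\in V_h$, H\"older's inequality yields
\begin{align*}
\|\L_h^\e u_h\|_{L_h^p(\W)}
= \sup_{0\neq v_h\in V_h}\frac{(\L_h^\e u_h,v_h)}{\|v_h\|_{L^{p'}(\W)}}
= \sup_{0\neq v_h\in V_h}\frac{(f,v_h)}{\|v_h\|_{L^{p'}(\W)}}
\le \|f\|_{L^p(\W)}.
\end{align*}
Chaining the two inequalities produces \eqref{eqn3:55}.

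There is no real obstacle here: the heavy lifting---the discrete Calderon--Zygmund estimate obtained via the freezing-coefficient technique, the covering argument, and the duality argument on the adjoint operator---has already been absorbed into Theorem \ref{thm3:1}. The only ingredients needed beyond that theorem are the trivial finite-dimensional existence-from-uniqueness argument and a one-line H\"older bound on the discrete semi-norm $\|\cdot\|_{L_h^p(\W)}$, so this last theorem is genuinely a corollary of the stability theory developed in Sections \ref{sec:NewConstant} and \ref{section4.2}.
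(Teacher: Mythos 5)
Your proposal is correct and follows essentially the same route as the paper: both reduce everything to the discrete Calderon--Zygmund estimate of Theorem \ref{thm3:1} plus the H\"older bound $\|\L_h^{\e}u_h\|_{L_h^p(\W)}\le\|f\|_{L^p(\W)}$, the only difference being that the paper proves the bound \eqref{eqn3:55} first and applies it to the difference of two solutions, while you run the homogeneous-problem argument first. The one step you gloss over is that $\|u_h\|_{\Wh(\W)}=0$ with $u_h\in V_h$ actually forces $u_h\equiv 0$ (vanishing piecewise Hessian plus vanishing jumps of $u_h$ and $\nabla u_h$, including the boundary jumps, make $u_h$ globally affine and zero on $\partial\W$); the paper spells this out, and it is needed since $\|\cdot\|_{\Wh(\W)}$ being a norm on $V_h$ is not automatic.
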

\begin{proof}
Since \eqref{eqn:dis} is equivalent to a linear system, hence it suffices to prove the uniqueness. 
To show the uniqueness, we first prove \eqref{eqn3:55}.

Let $u_h\in V_h$ be a solution of \eqref{eqn:dis}, then from \eqref{eqn3:54} and the definition 
of $\|\cdot\|_{L_h^p(\W)}$  we have
\begin{align*}
\|u_h\|_{\Wh(\W)}\lss \|\L_h^{\e} u_h\|_{L_h^p(\W)} 
= \sup_{v_h\in V_h}\frac{(\L_h^{\e} u_h,v_h)}{\|v_h\|_{L^{p'}(\W)}} 
= \sup_{v_h\in V_h}\frac{(f,v_h)}{\|v_h\|_{L^{p'}(\W)}} \leq \|f\|_{L^p(\W)}.
\end{align*}
Hence, \eqref{eqn3:55} holds.

Suppose that $u_h^1,u_h^2\in V_h$ solve \eqref{eqn:dis}. Let $\tilde{u}_h = u_h^1-u_h^2$.  
Then by \eqref{eqn3:55} we have
\begin{align*}
\|\tilde{u}_h \|_{\Wh(\W)} \leq \|0\|_{L^p(\W)} = 0.
\end{align*}
Since $\tilde{u}_h \in V_h$ with $\|\tilde{u}_h \|_{\Wh(\W)}=0$ we conclude that
$\tilde{u}_h\in C^{1}(\W)$, $\tilde{u}_h\big|_{\partial\W}=0$, and $D_h^2\tilde{u}_h =0 $ in $\W$.  
The only way this can happen is if $\tilde{u}_h =0$.  Thus, the IP-DG solution must be unique.
The proof is complete.
\end{proof}

Next we show a C\'ea-type lemma for the IP-DG scheme, which immediately deduces the
optimal order error estimates in the $\Wh$-norm.

\begin{theorem}
Suppose that $h\leq h_0$ and $k\geq 2$.  Let $u\in W^{2,p}\cap W_0^{1,p}(\W)$ be the solution
of problem \eqref{NonDivProblem} and $u_h\in V_h$ solve \eqref{eqn:dis}.  Then
\begin{align}\label{eqn3:58}
\|u-u_h\|_{\Wh(\W)}\lss \inf_{w_h\in V_h}\|u-w_h\|_{\Wh(\W)}.
\end{align}
Moreover, if $u\in W^{s,p}(\W)$ for some $s\geq 2$, we have
\begin{align}\label{eqn3:59}
\|u-u_h\|_{\Wh(\W)}\lss h^{r-2}\|u\|_{W^{r,p}(\W)}, \qquad r = \min\{s,k+1\}.
\end{align}
\end{theorem}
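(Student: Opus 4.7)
The plan is to combine three ingredients that have already been established: (i) consistency of the IP-DG formulation with the strong solution (Galerkin orthogonality), (ii) the global stability estimate from Theorem \ref{thm3:1}, and (iii) the local boundedness estimate from Lemma \ref{lem3:3} for $\L_h^\e$ acting on broken Sobolev functions. The argument mirrors the standard C\'ea--Strang framework but carried out in the $W^{2,p}_h$-norm rather than an energy norm.

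First, I would verify Galerkin orthogonality. Since $u\in W^{2,p}(\W)\cap W^{1,p}_0(\W)$ is the strong solution, all interior jumps $[u]$ and $[\nab u]$ and the boundary traces of $u$ vanish, so the extended bilinear form \eqref{eqn6} evaluated on $u$ reduces to $a^\e_h(u,v_h)=-\int_\W (A:D^2 u)v_h\,\dx[x]=(f,v_h)$ for every $v_h\in V_h$. Combining this with the defining equation \eqref{eqn:dis} for $u_h$ yields $a^\e_h(u-u_h,v_h)=0$ for all $v_h\in V_h$, i.e., $(\L_h^\e(u-u_h),v_h)=0$ for all $v_h\in V_h$.

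Next, for arbitrary $w_h\in V_h$ I would split $u-u_h=(u-w_h)+(w_h-u_h)$, apply the triangle inequality, and control the discrete piece $w_h-u_h\in V_h$ by Theorem \ref{thm3:1}:
\begin{align*}
\|w_h-u_h\|_{\Wh(\W)}\lss \|\L_h^\e(w_h-u_h)\|_{L^p_h(\W)}.
\end{align*}
By the definition of $\|\cdot\|_{L^p_h(\W)}$ and Galerkin orthogonality, any test function $v_h\in V_h$ gives $(\L_h^\e(w_h-u_h),v_h)=a^\e_h(w_h-u,v_h)$, so
\begin{align*}
\|\L_h^\e(w_h-u_h)\|_{L^p_h(\W)}= \sup_{0\neq v_h\in V_h}\frac{a^\e_h(w_h-u,v_h)}{\|v_h\|_{L^{p'}(\W)}}\leq \|\L_h^\e(w_h-u)\|_{L^p_h(\W)}.
\end{align*}
Since $w_h-u\in W^{2,p}(\Th)$, Lemma \ref{lem3:3} applied with $D=\W$ (taking $R={\rm diam}(\W)$) then gives $\|\L_h^\e(w_h-u)\|_{L^p_h(\W)}\lss \|u-w_h\|_{\Wh(\W)}$. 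Assembling the pieces yields $\|u-u_h\|_{\Wh(\W)}\lss \|u-w_h\|_{\Wh(\W)}$, and taking the infimum over $w_h\in V_h$ proves \eqref{eqn3:58}.

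For the convergence rate \eqref{eqn3:59}, I would simply insert $w_h=I_h u$ into the quasi-optimality estimate and invoke standard broken-Sobolev interpolation theory, which under $u\in W^{s,p}(\W)$ gives $\|u-I_h u\|_{\Wh(\W)}\lss h^{r-2}\|u\|_{W^{r,p}(\W)}$ for $r=\min\{s,k+1\}$. The only subtlety, and the one point worth flagging, is to check that consistency and Lemma \ref{lem3:3} genuinely extend the action of the bilinear form onto $u\in W^{2,p}(\W)\cap W^{1,p}_0(\W)$ via \eqref{eqn6} with vanishing jump contributions; once this is observed the rest is routine. The heavy lifting has been done in Theorem \ref{thm3:1} (the discrete Calderon--Zygmund/inf-sup stability), so this final theorem is essentially a corollary.
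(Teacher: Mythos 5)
Your proposal is correct and follows essentially the same route as the paper: Galerkin orthogonality from consistency, the stability estimate of Theorem \ref{thm3:1} applied to $u_h-w_h$, the boundedness of $\L_h^{\e}$ from Lemma \ref{lem3:3} applied to $u-w_h$, and then the triangle inequality and nodal interpolation for the rate. No gaps.
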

\begin{proof}
By the consistency of $\L_h^{\e}$ we have the following Galerkin orthogonality:
\begin{align}\label{eqn3:60}
\bigl(\L_h^{\e}(u-u_h),v_h \bigr) = 0 \qquad\forall v_h\in V_h.
\end{align}
Let $w_h\in V_h$, by Theorem \ref{thm3:1}, Lemma \ref{lem3:3}, \eqref{eqn3:60}, and the definition 
of $\|\cdot\|_{L_h^p(\W)}$ we have
\begin{align}\label{eqn3:61}
\|u_h-w_h\|_{\Wh(\W)} &\lss \|\L_h^{\e}(u_h-w_h)\|_{L_h^p(\W)} 
= \sup_{0\neq v_h\in V_h} \frac{(\L_h^{\e}(u_h-w_h),v_h)}{\|v_h\|_{L^{p'}(\W)}} \\
&= \sup_{0\neq v_h\in V_h} \frac{(\L_h^{\e}(u-w_h),v_h)}{\|v_h\|_{L^{p'}(\W)}} 
= \|\L_h^{\e}(u-w_h)\|_{L_h^p(\W)} \nonumber\\
&\lss \|u-w_h\|_{\Wh(\W)}. \nonumber
\end{align}
Thus by \eqref{eqn3:61} and the triangle inequality we get
\begin{align}
\|u-u_h\|_{\Wh(\W)} \leq \|u-w_h\|_{\Wh(\W)} + \|u_h-w_h\|_{\Wh(\W)} \lss \|u-w_h\|_{\Wh(\W)}.
\end{align}
Taking the infimum on both sides over all $w_h\in V_h$ yields \eqref{eqn3:58}. Finally, \eqref{eqn3:59} 
follows from taking $w_h = I_hu$ and using the finite element interpolation theory \cite{Sp:BS}. 
The proof is complete.
\end{proof}

%Section 4---------------------------------------------------------------------------------------
\section{Numerical Experiments} \label{section5}

In this section we present a number of $2$-D numerical tests to verify our error estimate and 
to gauge the performance of our IP-DG methods. In particular, we shall compare our IP-DG methods to the related
conforming finite element counterpart developed in \cite{AX:FN}.  Moreover,  we shall also perform 
numerical tests which are not covered by our convergence theory, this includes the cases when the coefficient 
matrix is either discontinuous or degenerate.

\subsection{H\"{o}lder continuous coefficient}

For this test we take $A$ as the following H\"{o}lder continuous matrix-valued function: 
\[
A(x) = \left[
\begin{array}{c c}
|x|^{1/2}+1 & -|x|^{1/2} \\
-|x|^{1/2} & 5|x|^{1/2}+1
\end{array}
\right], \qquad x\in\R^2.
\]
Let $\W=(-1/2,1/2)^2$ and choose $f$ such that the exact solution is given by
\begin{align*}
u(x_1,x_2) = \sin(2\pi x_1)\sin(2\pi x_2)\exp(x_1\cos(x_2)),
\end{align*}
which has zero trace on the boundary. 

Figure \ref{figureholder} shows the errors in the $L^2(\W), W_h^{1,2}(\W),$ and $W_h^{2,2}(\W)$ norms 
of both the symmetrically and incompletely induced methods. The convergence rates observed for the symmetrically 
induced method are
\begin{align*}
\|u-u_h\|_{L^2(\W)} &= \mathcal{O}(h^{k+1}) \text{ for all } k,\\
\|\grad_h(u-u_h)\|_{L^2(\W)} &= \mathcal{O}(h^{k}) \text{ for all } k, \\
\|D_h^2(u-u_h)\|_{L^2(\W)} &= \mathcal{O}(h^{k-1}) \text{ for } k = 2,3.
\end{align*}
As expected, these convergence rates are optimal. However, for the incompletely induced method we find that 
the rate of convergence in the $L^2$-norm is sub-optimal for even degree polynomials and optimal with all other 
norms and degrees.  This should be expected since the incomplete scheme is sub-optimal even for smooth $A$ \cite{F:BR}.

\begin{figure}[htb]
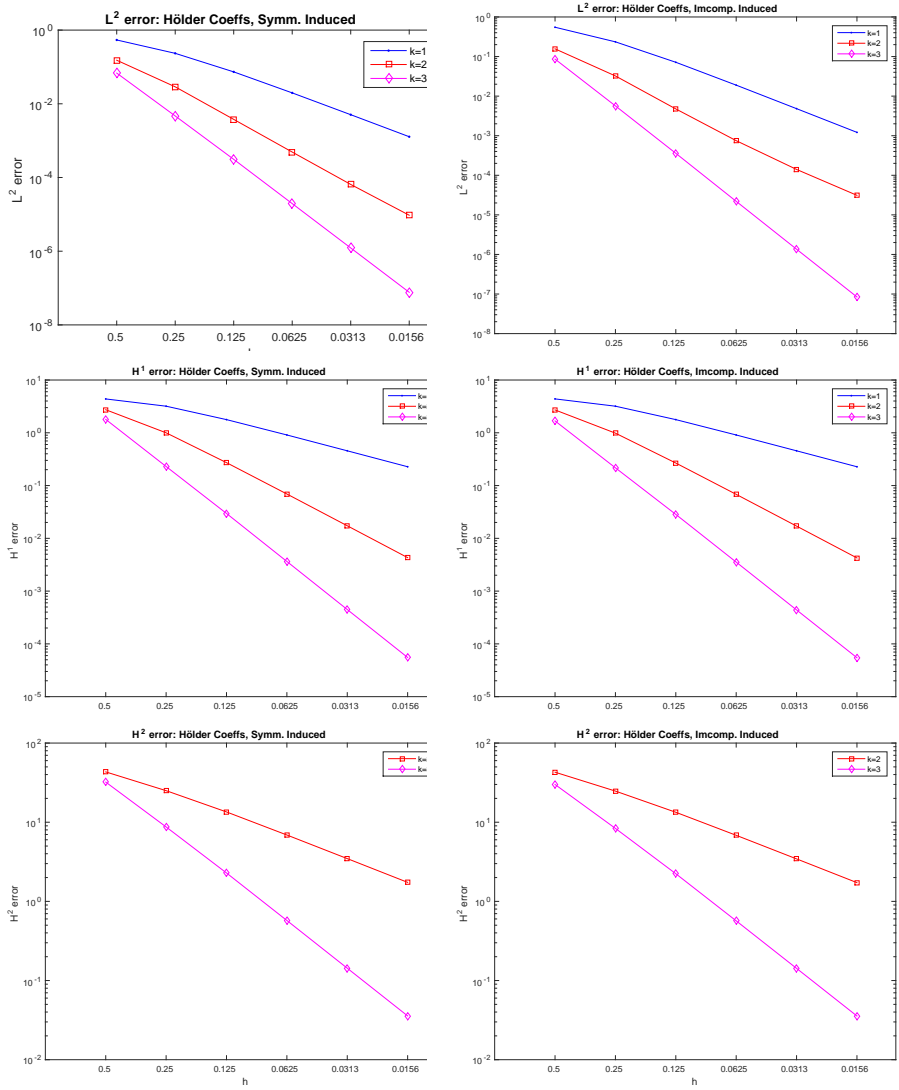

\label{figureholder}
\includegraphics[width=0.45\textwidth]{holdererrors-epsilon1-L2}
\includegraphics[width=0.45\textwidth]{holdererrors-epsilon0-L2}

\includegraphics[width=0.45\textwidth]{holdererrors-epsilon1-H1}
\includegraphics[width=0.45\textwidth]{holdererrors-epsilon0-H1}

\includegraphics[width=0.45\textwidth]{holdererrors-epsilon1-H2}
\includegraphics[width=0.45\textwidth]{holdererrors-epsilon0-H2}
\caption{The $L^2$ (top), piecewise $H^1$ (middle), and piecewise $H^2$ (bottom) errors 
for both the symmetrically (left) and incompletely (right) induced schemes with polynomial 
degree $k=1,2,3$.  $\gamma_e\equiv 100$ is used as the penalty parameter.} 
\end{figure}
 
\subsection{Uniformly continuous coefficients}

In this test we take $\W=(0,1/2)^2$ and let
\[
A(x) = \left[
\begin{array}{c c}
-\dfrac{5}{\log(|x|)}+15 & 1 \\
1 & -\dfrac{1}{\log(|x|)}+3
\end{array}
\right].
\]
$f$ is chosen such that $u(x) = |x|^{7/4}$ is the exact solution.  From \cite{AX:FN} we see that the expected 
convergence rates are 
\begin{align*}
\|\grad_h(u-u_h)\|_{L^2(\W)} &= \mathcal{O}(h^{\min\{k,7/4-\d\}}) \text{ for all } k, \\
\|D_h^2(u-u_h)\|_{L^2(\W)} &= \mathcal{O}(h^{\min\{k,7/4-\d\}-1}) \text{ for } k = 2,3
\end{align*}
for any $\d>0$. 

Figure \ref{figureunif} gives the computed results for both the symmetrically and incompletely induced schemes 
which match exactly the expected rates of convergence.

\begin{figure}[htb]
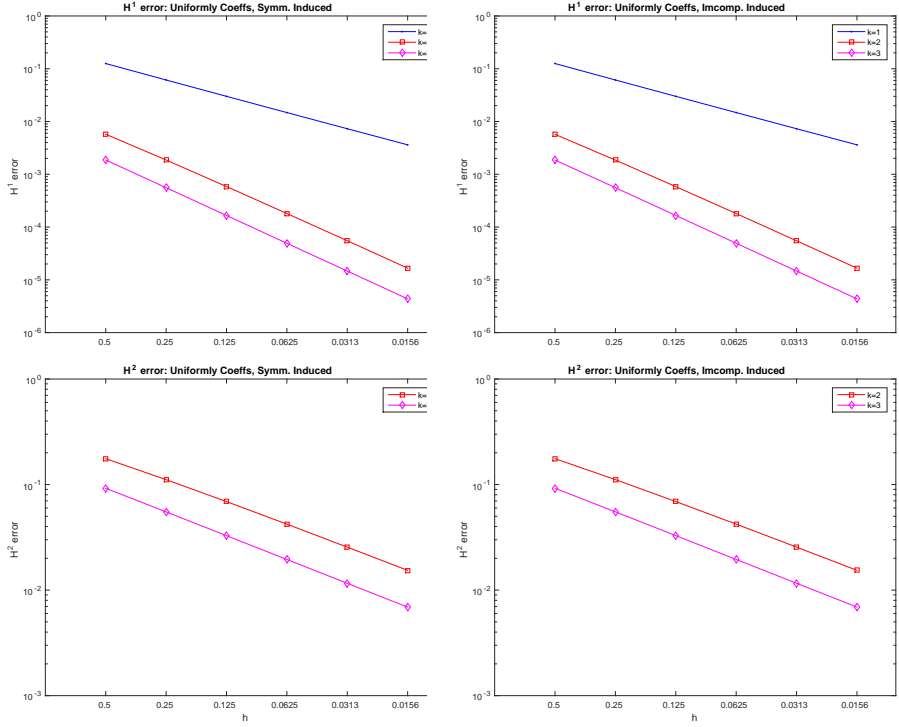
 \label{figureunif}
\includegraphics[width=0.45\textwidth]{uniferrors-epsilon1-H1}
\includegraphics[width=0.45\textwidth]{uniferrors-epsilon0-H1}

\includegraphics[width=0.45\textwidth]{uniferrors-epsilon1-H2}
\includegraphics[width=0.45\textwidth]{uniferrors-epsilon0-H2}
\caption{The piecewise $H^1$ (top) and piecewise $H^2$ (bottom) errors for both the symmetrically (left) 
and incompletely (right) induced schemes with polynomial degree $k=1,2,3$. $\gamma_e\equiv 1000$ is used as
the penalty parameter.}
\end{figure}

\subsection{Degenerate coefficients}

In this test we take $\W=(0,1)^2$ and the matrix
\[
A(x) = \frac{16}{9}\left[
\begin{array}{c c}
x_1^{2/3} & -x_1^{1/3}x_2^{1/3} \\
-x_1^{1/3}x_2^{1/3} & x_2^{2/3}
\end{array}
\right].
\]
$f=0$ and the exact solution $u(x) = x_1^{4/3}-x_2^{4/3}$.  For an explanation for this example we refer to \cite{AX:FN}.  
Note that $\det(A)=0$ for every $x\in\W$ so this PDE is degenerate everywhere and is outside the case considered in this
paper. We also observe that $u\in W^{m,p}(\W)$ provided $(4-3m)p>-1$. 

Figure \ref{figuredegenerate} shows the $L^2$ and piecewise $H^1$ errors for both the symmetrically and incompletely 
induced methods. The numerical results suggest the following rates of convergence:
\begin{align*}
 \|u-u_h\|_{L^2(\W)} &= \mathcal{O}(h^{4/3}),\\
 \|\grad_h(u-u_h)\|_{L^2(\W)} &= \mathcal{O}(h^{5/6}) 
\end{align*}
for $k=1,2,3$. These rates are consistent with the results of the related conforming finite element 
method given in \cite{AX:FN}.

\begin{figure}[htb]
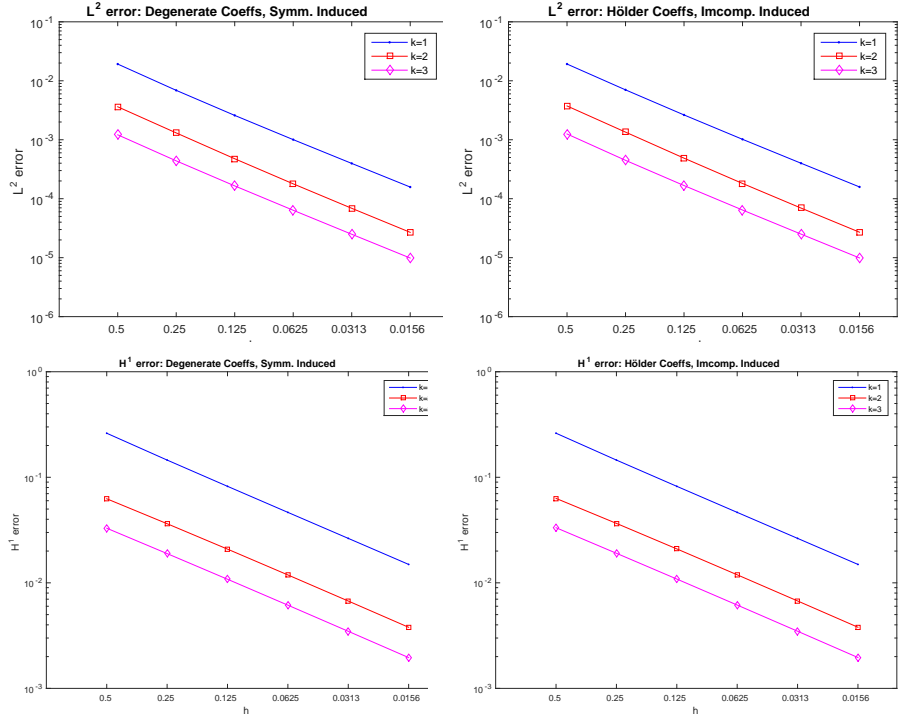

\label{figuredegenerate}
\includegraphics[width=0.45\textwidth]{degenerateerrors-epsilon1-L2}
\includegraphics[width=0.45\textwidth]{degenerateerrors-epsilon0-L2}

\includegraphics[width=0.45\textwidth]{degenerateerrors-epsilon1-H1}
\includegraphics[width=0.45\textwidth]{degenerateerrors-epsilon0-H1}
\caption{The $L^2$ (top) and piecewise $H^1$ (bottom) errors for both the symmetrically (left) and incompletely (right) 
induced schemes with polynomial degree $k=1,2,3$. $\gamma_e\equiv 100$ is used as the penalty parameter. }
\end{figure}

\subsection{$L^{\infty}$ Cord\`{e}s coefficients}

Our next test is taken from \cite{SmearsSuli,AX:WW} where a different DG method and a weak Galerkin method
were used to solve this problem.  Let $\W=[-1,1]^2$ and 
\[
A(x) = \frac{16}{9}\left[
\begin{array}{c c}
2 & x_1 x_2/|x_1 x_2| \\
x_1 x_2/|x_1 x_2| & 2
\end{array}
\right].
\]
$f$ is chosen so that the exact solution is $u(x) = x_1 x_2 \bigl(1-e^{1-|x_1|}\bigr)\bigl(1-e^{1-|x_2|}\bigr)$. 
Notice that the matrix $A$ is discontinuous across the $x_1$-axis and $x_2$-axis, and it satisfies the Cord\`{e}s 
condition. While our convergence theory does not apply to this example, we still compute the numerical solution on 
a uniform triangulation that has edges on all discontinuities of $A$. Due to its inconsistent behavior we list
the $L^2$ error and convergence rates in Table \ref{tab:cordes}.  The following $H^1$ semi-norm rates are observed:
\begin{align*}
 \|\grad_h (u-u_h)\|_{L^2(\W)} = \mathcal{O}(h^{k})
\end{align*}
for $k=1,2,3$ as shown in Figure \ref{figurecordes}.

\begin{table}[htb]
\begin{center}
\caption{The $L^2$ errors and rates for the symmetrically induced method.  The rates for the incompletely induced 
method are similar.  $\gamma_e\equiv 10000$ is used as the penalty parameter.}
\begin{tabular}{ |c|c|c|c|c|c|c|}\cline{2-7}
\multicolumn{1}{ c }{}      & \multicolumn{2}{ | c | }{$k=1$} & \multicolumn{2}{  c }{$k=2$} & \multicolumn{2}{ |  c  |}{$k=3$} \\ \hline
$h$    & $\|u-u_h\|_{L^2(\W)}$ & rate & $\|u-u_h\|_{L^2(\W)}$ & rate & $\|u-u_h\|_{L^2(\W)}$ & rate \\ \hline
1      & 1.3e-1 &  -   & 7.7e-2 &  -   & 2.6e-2 &  -   \\ \hline 
1/2    & 8.9e-2 & 0.58 & 1.8e-2 & 2.09 & 1.5e-3 & 4.12 \\ \hline 
1/4    & 4.6e-2 & 0.95 & 2.9e-3 & 2.62 & 7.6e-4 & 4.27 \\ \hline
1/8    & 1.9e-2 & 1.22 & 4.8e-2 & 2.62 & 4.2e-6 & 4.19 \\ \hline
1/16   & 7.6e-3 & 1.35 & 8.0e-5 & 2.57 & 3.3e-7 & 3.65 \\ \hline
1/32   & 2.9e-3 & 1.41 & 1.4e-5 & 2.54 & 3.2e-8 & 3.36 \\ \hline
\end{tabular} \label{tab:cordes}
\end{center}
\end{table}

\begin{figure}[htb]
\label{figurecordes}
\includegraphics[width=0.45\textwidth]{cordeserrors-epsilon1-L2}
\includegraphics[width=0.45\textwidth]{cordeserrors-epsilon0-L2}

\includegraphics[width=0.45\textwidth]{cordeserrors-epsilon1-H1}
\includegraphics[width=0.45\textwidth]{cordeserrors-epsilon0-H1}
\caption{The $L^2$ (top) and piecewise $H^1$ (bottom) errors for both the symmetrically (left) and incompletely (right) 
induced schemes with polynomial degree $k=1,2,3$. $\gamma_e\equiv 10000$ is used as the penalty parameter. }
\end{figure}

%%%%%%%%%%%%%%%
%\bibliographystyle{plain}
%\bibliography{gradbib}

%%%%%%%%%%%%%%%%%%%%%%%%%%%%                                             
\appendix
\section{Proof of Lemma \ref{lem2:4}}
\begin{proof}
From \cite[Lemma 3]{NM:AH} we have the following estimates for $\tI$ 
\begin{align}\label{eqn2:40}
h^{mp}|\eta v_h-\tI(\eta v_h)|_{W^{m,p}(T)}^p\lss h^{p(k+1)}|\eta v_h|_{W^{k+1,p}(T)}^p,
\quad 0\leq m\leq k+1.
\end{align}
By the assumptions on $\eta$, the fact that $|v_h|_{W^{k+1,p}(T)}=0$, and a standard inverse inequality we get
\begin{align}\label{eqn2:41}
|\eta v_h|_{W^{k+1,p}(T)}&\lss\sum_{|\a|+|\beta|=k+1}\int_{T}|D^{\a}\eta|^p|D^{\beta}v_h|^p\dx[x]\\
&\lss \sum_{j=0}^k\frac{1}{d^{p(k+1-j}}|v_h|_{W^{j,p}(T)}^p\lss \sum_{j=0}^k \frac{h^{-jp}}{d^{p(k+1-j)}}\|v_h\|_{L^p(T)}^p.
\nonumber
\end{align}
It follows from \eqref{eqn2:40} and \eqref{eqn2:41} with $h\leq d$ that
\begin{align*}
h^{mp}|\eta v_h-\tI(\eta v_h)|_{W^{m,p}(T)}^p
\lss \sum_{j=0}^k\frac{h^{p(k+1-j)}}{d^{p(k+1-j)}}\|v_h\|_{L^p(T)}^p\lss \frac{h^p}{d^p}\|v_h\|_{L^p(T)}.
\end{align*}
Thus we have
\begin{align*}
\|\eta v_h-\tI(\eta v_h)\|_{L^p(D)} 
&\lss \sum_{\substack{T\in\Th \\ T\cap D\neq\emptyset}}|\eta v_h-\tI(\eta v_h)|_{L^p(T)}^p\\
&\lss\sum_{\substack{T\in\Th \\ T\cap D\neq\emptyset}}\frac{h^p}{d^p}\|v_h\|_{L^p(T)}
\lss\frac{h^p}{d^p}\|v_h\|_{L^p(D_h)},
\end{align*}
\begin{align*}
h^p\|\grad_h(\eta v_h-\tI(\eta v_h))\|_{L^p(D)} 
&\lss \sum_{\substack{T\in\Th \\ T\cap D\neq\emptyset}}h^p|\eta v_h-\tI(\eta v_h)|_{W^{1,p}(T)}^p\\
&\lss\sum_{\substack{T\in\Th \\ T\cap D\neq\emptyset}}\frac{h^p}{d^p}\|v_h\|_{L^p(T)}
\lss\frac{h^p}{d^p}\|v_h\|_{L^p(D_h)}, \\
h^{2p}\|\grad_h(\eta v_h-\tI(\eta v_h))\|_{L^p(D)} 
&\lss \sum_{\substack{T\in\Th \\ T\cap D\neq\emptyset}}h^{2p}|\eta v_h-\tI(\eta v_h)|_{W^{2,p}(T)}^p\\
&\lss\sum_{\substack{T\in\Th \\ T\cap D\neq\emptyset}}\frac{h^p}{d^p}\|v_h\|_{L^p(T)}
\lss\frac{h^p}{d^p}\|v_h\|_{L^p(D_h)}.
\end{align*}
Hence \eqref{eqn2:37}, \eqref{eqn2:38}, and \eqref{eqn2:38.5} hold. 

To show \eqref{eqn2:39},  using \eqref{eqn2:41} and an inverse estimate we have
\begin{align}\label{eqn2:42}
h^{p(k-1)}|\eta v_h|_{W^{k+1,p}(T)}^p&\lss \sum_{j=0}^{k} \frac{h^{p(k-1)}}{d^{p(k+1-j)}}|v_h|_{W^{j,p}(T)}^p\\
&\lss\frac{1}{d^{2p}}\|v_h\|_{L^p(T)}^p+\sum_{j=1}^{k}\frac{h^{p(k-j)}}{d^{p(k+1-j)}}|v_h|_{W^{1,p}(T)}^p\nonumber\\
&\lss\frac{1}{d^{2p}}\Bigl(\|v_h\|_{L^p(T)}^p+|v_h|_{W^{1,p}(T)}^p\Bigr).\nonumber
\end{align}
It follows from \eqref{eqn2:40} and \eqref{eqn2:42} that
\begin{align*}
\|D^2(\eta v_h-\tI(\eta v_h))\|_{L^p(T)}^p
&\lss h^{p(k-1)}|\eta v_h|_{W^{k+1,p}(T)}^p
\lss \frac{1}{d^{2p}}\Bigl(\|v_h\|_{L^p(T)}^p+|v_h|_{W^{1,p}(T)}^p\Bigr),\\
h^{-p}\|\grad(\eta v_h-\tI(\eta v_h))\|_{L^p(T)}^p
&\lss h^{p(k-1)}|\eta v_h|_{W^{k+1,p}(T)}^p
\lss \frac{1}{d^{2p}}\Bigl(\|v_h\|_{L^p(T)}^p+|v_h|_{W^{1,p}(T)}^p\Bigr),\\
h^{-2p}\|\eta v_h-\tI(\eta v_h)\|_{L^p(T)}^p
&\lss h^{p(k-1)}|\eta v_h|_{W^{k+1,p}(T)}^p
\lss \frac{1}{d^{2p}}\Bigl(\|v_h\|_{L^p(T)}^p+|v_h|_{W^{1,p}(T)}^p\Bigr).
\end{align*}
Using the previous three estimates and Lemma \ref{lem2:2} we get
\begin{align*}
&\|\eta v_h-\tI(\eta v_h)\|_{\Wh(D)}^p \lss \sum_{T\in\Th}\|D^2(\eta v_h-\tI(\eta v_h))\|_{L^p(T)}^p\\
&\qquad +\sum_{e\in\EI}h_e^{1-p}\|[\grad(\eta v_h-\tI(\eta v_h))]\|_{L^p(e)}^p
 +\sum_{e\in\EI}h_e^{1-2p}\|[\eta v_h-\tI(\eta v_h)]\|_{L^p(e)}^p\\
&\qquad +\sum_{e\in\EB}h_e^{1-2p}\|\eta v_h-\tI(\eta v_h)\|_{L^p(e)}^p\\
&\lss \sum_{\substack{T\in\Th \\ T\cap D\neq\emptyset}}\|D^2(\eta v_h-\tI(\eta v_h))\|_{L^p(T)}^p 
+\sum_{\substack{T\in\Th \\ T\cap D\neq\emptyset}}h^{-p}\|\grad(\eta v_h-\tI(\eta v_h))\|_{L^p(T)}^p \\
&\qquad+\sum_{\substack{T\in\Th \\ T\cap D\neq\emptyset}}h^{-2p}\|\eta v_h-\tI(\eta v_h)\|_{L^p(T)}^p \\
&\lss \sum_{\substack{T\in\Th \\ T\cap D\neq\emptyset}}\frac{1}{d^{2p}}\Bigl(\|v_h\|_{L^p(T)}^p+|v_h|_{W^{1,p}(T)}^p\Bigr)
\lss \frac{1}{d^{2p}}\Bigl(\|v_h\|_{L^p(T)}^p+\|\grad_h v_h\|_{L^p(T)}^p\Bigr).
\end{align*}
Thus, \eqref{eqn2:39} holds.

Finally, the proof of \eqref{eqn2:43} is similar to that of \eqref{eqn2:39} except one minor detail.  
Since $k\geq 2$, by \eqref{eqn2:41} and an inverse inequality we get
\begin{align}\label{eqn2:44}
&h^{p(k-1)}|\eta v_h|_{W^{k+1,p}(T)}^p\lss \sum_{j=0}^{k} \frac{h^{p(k-1)}}{d^{p(k+1-j)}}|v_h|_{W^{j,p}(T)}^p\\
&\qquad =h^p\Bigl(\sum_{j=0}^{k} \frac{h^{p(k-2)}}{d^{p(k+1-j)}}|v_h|_{W^{j,p}(T)}^p\Bigr)\nonumber\\
&\qquad =h^p\Bigl(\frac{1}{d^{3p}}\|v_h\|_{L^p(T)}^p+\frac{1}{d^{2p}}|v_h|_{W^{1,p}(T)}^p
+\sum_{j=2}^k\frac{h^{p(k-j)}}{d^{p(k+1-j)}}|v|_{W^{2,p}(T)}^p\Bigr)\nonumber\\
&\qquad \lss \frac{h^p}{d^{3p}}\|v_h\|_{W^{2,p}(T)}^p.\nonumber
\end{align}
Thus,  we can obtain (as in the derivation on \eqref{eqn2:39}) using Lemma \ref{lem2:55} that
\begin{align*}
\|\eta v_h-\tI(\eta v_h)\|_{\Wh(D)} 
&\lss \frac{h}{d^3}\sum_{\substack{T\in\Th \\ T\cap D\neq\emptyset}}\|v_h\|_{W^{2,p}(T)}\\
&= \frac{h}{d^3}\Bigl(\|v_h\|_{L^p(D_h)} + \|\grad_h v_h\|_{L^p(D_h)} + \|D_h^2 v_h\|_{L^p(D_h)} \Bigr)\\
&\lss  \frac{h}{d^3}\|v_h\|_{\Wh(D_h)}.
\end{align*}
The proof is complete.
\end{proof}

\end{document}